%2multibyte Version: 5.50.0.2960 CodePage: 936
\documentclass[a4paper,oneside,10pt]{article}%
\usepackage{amsmath}
\usepackage{amsfonts}
\usepackage{amssymb}
\usepackage{graphicx}
\usepackage{color}
\usepackage[square,numbers,sort&compress]{natbib}%
\setcounter{MaxMatrixCols}{30}
%TCIDATA{OutputFilter=latex2.dll}
%TCIDATA{Version=5.50.0.2960}
%TCIDATA{Codepage=936}
%TCIDATA{CSTFile=40 LaTeX article.cst}
%TCIDATA{Created=Monday, May 21, 2012 20:44:23}
%TCIDATA{LastRevised=Saturday, December 22, 2018 13:58:52}
%TCIDATA{<META NAME="GraphicsSave" CONTENT="32">}
%TCIDATA{<META NAME="SaveForMode" CONTENT="1">}
%TCIDATA{BibliographyScheme=Manual}
%TCIDATA{<META NAME="DocumentShell" CONTENT="Standard LaTeX\Blank - Standard LaTeX Article">}
%TCIDATA{Language=American English}
%TCIDATA{ComputeDefs=
%$q(t)=cP(t)$
%}
%BeginMSIPreambleData
\providecommand{\U}[1]{\protect\rule{.1in}{.1in}}
%EndMSIPreambleData

\pagenumbering{arabic}
\setlength{\textwidth}{165mm}
\setlength{\textheight}{220mm}
\headsep=15pt \topmargin=-5mm \oddsidemargin=-0.36cm
\evensidemargin=-0.36cm \raggedbottom
\newtheorem{theorem}{Theorem}[section]

\newtheorem{assumption}[theorem]{Assumption}

\newtheorem{lemma}[theorem]{Lemma}

\newenvironment{proof}[1][Proof]{\noindent\textbf{#1.} }{\ \rule{0.5em}{0.5em}}
\numberwithin{equation}{section}

\begin{document}

\title{A note on the global stochastic maximum principle for fully coupled
forward-backward stochastic systems}
\author{Mingshang Hu\thanks{Zhongtai Securities Institute for Financial Studies,
Shandong University, Jinan, Shandong 250100, PR China. humingshang@sdu.edu.cn.
Research supported by NSF (No. 11671231) and Young Scholars Program of
Shandong University (No. 2016WLJH10). }
\and Shaolin Ji\thanks{Zhongtai Securities Institute for Financial Studies,
Shandong University, Jinan, Shandong 250100, PR China. jsl@sdu.edu.cn
(Corresponding author). Research supported by NSF (No. 11571203).}
\and Xiaole Xue\thanks{Zhongtai Securities Institute for Financial Studies,
Shandong University, Jinan Shandong 250100, PR China. xiaolexue1989@gmail.com,
xuexiaole.good@163.com. Research supported by NSF (No. 11801315) and Natural
Science Foundation of Shandong Province(ZR2018QA001). }}
\maketitle

\textbf{Abstract}. Hu et. al \cite{Hu-JX} studied a stochastic optimal control
problem for fully coupled forward-backward stochastic control systems with a
nonempty control domain. By assuming a weakly coupled condition, they
established an approach to obtain the first-order, second-order variational
equations and the adjoint equations for the states $X$, $Y$ and $Z$ and
deduced the global maximum principle. But it is well known that there are
several different conditions such as monotonicity condition, weakly coupled
condition and other conditions (see \cite{YingHu002, Ma-WZZ, Ma-Yong-FBSDE,
Ma-ZZ,Pardoux-Tang,YongZhou, Zhang17} and the references therein) which can
guarantee the existence and uniqueness of the solution to (\ref{intro--fbsde}%
). In this note, to overcome the limitations of assuming a specific condition,
we propose two kinds of assumptions which can guarantee that the approach
developed in \cite{Hu-JX} is still applicable. Under these two kinds of
assumptions, we obtain the global stochastic maximum principle.

{\textbf{Key words}. } Backward stochastic differential equations, Nonconvex
control domain, Stochastic recursive optimal control, Maximum principle, Spike variation.

\textbf{AMS subject classifications.} 93E20, 60H10, 35K15

\addcontentsline{toc}{section}{\hspace*{1.8em}Abstract}

\section{Introduction}

In 1990, Peng \cite{Peng90} obtained the global maximum principle for the
classical stochastic optimal control problem. Since then, many researchers
investigate this kind of optimal control problems for various stochastic
systems (see \cite{YingHu006,YingHu001,Tang003,Tang004}). Peng \cite{Peng93}
generalized the classical stochastic optimal control problem to the so-called
stochastic recursive optimal control problem where the cost functional is
defined by $Y(0)$. Here $(Y(\cdot),Z(\cdot))$ is the solution of the following
backward stochastic differential equation (BSDE) (\ref{intro--bsde0}):%
\begin{equation}
\left\{
\begin{array}
[c]{rl}%
-dY(t)= & f(t,X(t),Y(t),Z(t),u(t))dt-Z(t)dB(t),\\
Y(T)= & \phi(X(T)).
\end{array}
\right.  \label{intro--bsde0}%
\end{equation}
In \cite{Peng93}, the control domain is convex and a local stochastic maximum
principle is established. The local stochastic maximum principles for other
various problems were studied in (Dokuchaev and Zhou \cite{Dokuchaev-Zhou}, Ji
and Zhou \cite{Ji-Zhou}, Peng \cite{Peng93}, Shi and Wu \cite{Shi-Wu}, Xu
\cite{Xu95}, Meyer-Brandis, {\O }ksendal and Zhou \cite{Zhou003}, see also the
references therein). When the control domain is nonconvex, one encounters an
essential difficulty when trying to derive the first-order and second-order
variational equations for the BSDE (\ref{intro--bsde0}) and it is proposed as
an open problem in Peng \cite{Peng99}. Hu \cite{Hu17} studied this open
problem and obtained a completely novel global maximum principle. Yong
\cite{Yong10} studied a fully coupled controlled FBSDE with mixed
initial-terminal conditions. In \cite{Yong10}, Yong regarded $Z(\cdot)$ as a
control process and then applied the Ekeland variational principle to obtain
an optimality variational principle which contains unknown parameters. Using
the similar approach, Wu \cite{Wu13} studied a stochastic recursive optimal
control problem.

In \cite{Hu-JX}, the following optimal control problem was considered:
minimize the cost functional
\[
J(u(\cdot))=Y(0)
\]
subject to the following fully coupled forward-backward stochastic
differential equation (FBSDE):
\begin{equation}
\left\{
\begin{array}
[c]{rl}%
dX(t)= & b(t,X(t),Y(t),Z(t),u(t))dt+\sigma(t,X(t),Y(t),Z(t),u(t))dB(t),\\
dY(t)= & -g(t,X(t),Y(t),Z(t),u(t))dt+Z(t)dB(t),\\
X(0)= & x_{0},\ Y(T)=\phi(X(T)),
\end{array}
\right.  \label{intro--fbsde}%
\end{equation}
where the control variable $u$ takes values in a nonempty subset of
$\mathbb{R}^{k}$ and the state variable $X$ belongs to $\mathbb{R}$. The
authors systematically developed an approach to obtain the first-order,
second-order variational equations and the adjoint equations for the states
$X$, $Y$ and $Z$ and deduced the global maximum principle.

To guarantee the well-posedness of (\ref{intro--fbsde}), a weakly coupled
condition was assumed in \cite{Hu-JX}. But it is well known that there are
several different conditions such as monotonicity condition, weakly coupled
condition and other conditions (see \cite{YingHu002, Ma-WZZ, Ma-Yong-FBSDE,
Ma-ZZ,Pardoux-Tang,YongZhou, Zhang17} and the references therein) which can
guarantee the existence and uniqueness of the solution to (\ref{intro--fbsde}%
). Then it naturally leads to the following problem: is the approach
established in \cite{Hu-JX} applicable to the other conditions except the
weakly coupled condition? After careful analysis, we found that applying the
approach in \cite{Hu-JX} to obtain the global maximum principle essentially
depends on the following assumptions: (1) there exists a unique solution to
FBSDE (\ref{intro--fbsde}); (2) the solution to FBSDE (\ref{intro--fbsde}) has
$L^{p}$-estimates; (3) there exists a unique solution to the first-order
adjoint equation. In other words, any assumptions which make the above three
statements hold are sufficient to deduce the global maximum principle by the
approach in \cite{Hu-JX}.

In this paper, motivated by the above analysis, we give up assuming a specific
condition (weakly coupled condition, monotonicity condition or other
conditions in the related literatures) and directly propose the following two
kind of assumptions. The first kind of assumptions is: (1) there exists a
unique solution to FBSDE (\ref{intro--fbsde}); (2) there exists a unique
bounded solution to the first-order adjoint equation. For this case, we can
prove the $L^{p}$-estimates for the solution to FBSDE (\ref{intro--fbsde})
hold. If the solution $q$ in the first-order adjoint equation is unbounded,
then the optimal control problem becomes more complicated. So for this case,
we propose the second kind of assumptions: (1) $\sigma$ is linear in $z$; (2)
there exists a unique solution to FBSDE (\ref{intro--fbsde}); (3) the solution
to FBSDE (\ref{intro--fbsde}) has $L^{p}$-estimates; (4) there exists a unique
solution to the first-order adjoint equation. We prove that for both kinds of
the assumptions, all the appropriate estimates for the solutions of the
first-order and second-order variational equations hold. Thus, the global
maximum principle can be deduced naturally. Beside this, we also generalize
the state variable $X$ in (\ref{intro--fbsde}) to multi-dimensional case in
this paper.

The rest of the paper is organized as follows. In section 2, we give the
preliminaries and formulation of our problem. A global stochastic maximum
principle is obtained by spike variation method in section 3. In appendix, we
give some results that will be used in our proofs.

\section{ Preliminaries and problem formulation}

Let $(\Omega,\mathcal{F},P)$ be a complete probability space on which a
standard $d$-dimensional Brownian motion $B=(B_{1}(t),B_{2}(t),...B_{d}%
(t))_{0\leq t\leq T}^{\intercal}$ is defined. Assume that $\mathbb{F=}%
\{\mathcal{F}_{t},0\leq t\leq T\}$ is the $P$-augmentation of the natural
filtration of $B$, where $\mathcal{F}_{0}$ contains all $P$-null sets of
$\mathcal{F}$. Denote by $\mathbb{R}^{n}$ the $n$-dimensional real Euclidean
space and $\mathbb{R}^{k\times n}$ the set of $k\times n$ real matrices. Let
$\langle\cdot,\cdot\rangle$ (resp. $\left\vert \cdot\right\vert $) denote the
usual scalar product (resp. usual norm) of $\mathbb{R}^{n}$ and $\mathbb{R}%
^{k\times n}$. The scalar product (resp. norm) of $M=(m_{ij})$, $N=(n_{ij}%
)\in\mathbb{R}^{k\times n}$ is denoted by $\langle M,N\rangle=\mathrm{tr}%
\{MN^{\intercal}\}$ (resp.$\Vert M\Vert=\sqrt{MM^{\intercal}}$), where the
superscript $^{\intercal}$ denotes the transpose of vectors or matrices.

We introduce the following spaces.

$L_{\mathcal{F}_{T}}^{p}(\Omega;\mathbb{R}^{n})$ : the space of $\mathcal{F}%
_{T}$-measurable $\mathbb{R}^{n}$-valued random variables $\eta$ such that
\[
||\eta||_{p}:=(\mathbb{E}[|\eta|^{p}])^{\frac{1}{p}}<\infty,
\]

$L_{\mathcal{F}_{T}}^{\infty}(\Omega;\mathbb{R}^{n})$: the space of
$\mathcal{F}_{T}$-measurable $\mathbb{R}^{n}$-valued random variables $\eta$
such that
\[
||\eta||_{\infty}:=\underset{\omega\in\Omega}{\mathrm{ess~sup}}\left\Vert
\eta\right\Vert <\infty,
\]

$L_{\mathcal{F}}^{p}([0,T];\mathbb{R}^{n})$: the space of $\mathbb{F}$-adapted
and $p$-th integrable stochastic processes on $[0,T]$ such that
\[
\mathbb{E}\left[  \int_{0}^{T}\left\vert f(t)\right\vert ^{p}dt\right]
<\infty,
\]

$L_{\mathcal{F}}^{\infty}(0,T;\mathbb{R}^{n})$: the space of $\mathbb{F}%
$-adapted and uniformly bounded stochastic processes on $[0,T]$ such that
\[
||f(\cdot)||_{\infty}=\underset{(t,\omega)\in\lbrack0,T]\times\Omega
}{\mathrm{ess~sup}}|f(t)|<\infty,
\]

$L_{\mathcal{F}}^{p,q}([0,T];\mathbb{R}^{n})$: the space of $\mathbb{F}%
$-adapted stochastic processes on $[0,T]$ such that
\[
||f(\cdot)||_{p,q}=\left\{  \mathbb{E}\left[  \left(  \int_{0}^{T}%
|f(t)|^{p}dt\right)  ^{\frac{q}{p}}\right]  \right\}  ^{\frac{1}{q}}<\infty,
\]

$L_{\mathcal{F}}^{p}(\Omega;C([0,T],\mathbb{R}^{n}))$: the space of
$\mathbb{F}$-adapted continuous stochastic processes on $[0,T]$ such that
\[
\mathbb{E}\left[  \sup\limits_{0\leq t\leq T}\left\vert f(t)\right\vert
^{p}\right]  <\infty.
\]

\subsection{Problem formulation}

Consider the following fully coupled stochastic control system:
\begin{equation}
\left\{
\begin{array}
[c]{rl}%
dX(t)= & b(t,X(t),Y(t),Z(t),u(t))dt+\sigma(t,X(t),Y(t),Z(t),u(t))dB(t),\\
dY(t)= & -g(t,X(t),Y(t),Z(t),u(t))dt+Z(t)dB(t),\\
X(0)= & x_{0},\ Y(T)=\phi(X(T)),
\end{array}
\right.  \label{state-eq}%
\end{equation}
where%
\[
b:[0,T]\times\mathbb{R}^{n}\times\mathbb{R}\times\mathbb{R}\times
U\rightarrow\mathbb{R}^{n},
\]%
\[
\sigma:[0,T]\times\mathbb{R}^{n}\times\mathbb{R}\times\mathbb{R}\times
U\rightarrow\mathbb{R}^{n\times1},
\]%
\[
g:[0,T]\times\mathbb{R}^{n}\times\mathbb{R}\times\mathbb{R}\times
U\rightarrow\mathbb{R},
\]%
\[
\phi:\mathbb{R}^{n}\rightarrow\mathbb{R}.
\]

An admissible control $u(\cdot)$ is an $\mathbb{F}$-adapted process with
values in $U$ such that%
\[
\sup\limits_{0\leq t\leq T}\mathbb{E}[|u(t)|^{8}]<\infty,
\]
where the control domain $U$ is a nonempty subset of $\mathbb{R}^{k}$. Denote
the admissible control set by $\mathcal{U}[0,T]$.

Our optimal control problem is to minimize the cost functional
\[
J(u(\cdot))=Y(0)
\]
over $\mathcal{U}[0,T]$, that is%

\begin{equation}
\underset{u(\cdot)\in\mathcal{U}[0,T]}{\inf}J(u(\cdot)). \label{obje-eq}%
\end{equation}

\section{Stochastic maximum principle}

We derive maximum principle (necessary condition for optimality) for the
optimization problem (\ref{obje-eq}) in this section. For simplicity of
presentation, we only study the case $d=1$. In this section, the constant $C$
will change from line to line in our proof.

\begin{assumption}
\label{assmlip} For $\psi=b,$ $\sigma,$ $g$ and $\phi$, we suppose

(i) $\psi$, $\psi_{x}$, $\psi_{y}$, $\psi_{z}$ are continuous in $(x,y,z,u)$;
$\psi_{x}$, $\psi_{y}$, $\psi_{z}$ are bounded; there exists a constant $L>0$
such that%
\[%
\begin{array}
[c]{rl}%
|\psi(t,x,y,z,u)| & \leq L\left(  1+|x|+|y|+|z|+|u|\right)  ,\\
|\sigma(t,0,0,z,u)-\sigma(t,0,0,z,u^{\prime})| & \leq L(1+|u|+|u^{\prime}|).
\end{array}
\]

(ii) $\psi_{xx}$, $\psi_{xy}$, $\psi_{yy}$ , $\psi_{xz}$, $\psi_{yz}$,
$\psi_{zz}$ are continuous in $(x,y,z,u)$; $\psi_{xx}$, $\psi_{xy}$,
$\psi_{yy}$, $\psi_{xz}$, $\psi_{yz}$ ,$\psi_{zz}$ are bounded.
\end{assumption}

\begin{assumption}
\label{assm-ex} For any $u(\cdot)\in\mathcal{U}[0,T]$ and $\beta\in
\lbrack2,8]$, FBSDE (\ref{state-eq}) has a unique solution $(X(\cdot
),Y(\cdot),Z(\cdot))\in L_{\mathcal{F}}^{\beta}(\Omega;C([0,T],\mathbb{R}%
^{n}))\times L_{\mathcal{F}}^{\beta}(\Omega;C([0,T],\mathbb{R}))\times
L_{\mathcal{F}}^{2,\beta}([0,T];\mathbb{R})$.
\end{assumption}

Let $\bar{u}(\cdot)$ be optimal and $(\bar{X}(\cdot),\bar{Y}(\cdot),\bar
{Z}(\cdot))$ be the corresponding state processes of (\ref{state-eq}). Since
the control domain is not necessarily convex, we resort to spike variation
method. For any $u(\cdot)\in\mathcal{U}[0,T]$ and $0<\epsilon<T$, define
\[
u^{\epsilon}(t)=\left\{
\begin{array}
[c]{ll}%
\bar{u}(t), & \ t\in\lbrack0,T]\backslash E_{\epsilon},\\
u(t), & \ t\in E_{\epsilon},
\end{array}
\right.
\]
where $E_{\epsilon}\subset\lbrack0,T]$ is\ a measurable set with
$|E_{\epsilon}|=\epsilon$. Let $(X^{\epsilon}(\cdot),Y^{\epsilon}%
(\cdot),Z^{\epsilon}(\cdot))$ be the state processes of (\ref{state-eq})
associated with $u^{\epsilon}(\cdot)$.

For simplicity, for $\psi=b$, $\sigma$, $g$, $\phi$ and $\kappa=x$, $y$, $z$,
denote%
\[%
\begin{array}
[c]{rl}%
\psi(t)= & \psi(t,\bar{X}(t),\bar{Y}(t),\bar{Z}(t),\bar{u}(t)),\\
\psi_{\kappa}(t)= & \psi_{\kappa}(t,\bar{X}(t),\bar{Y}(t),\bar{Z}(t),\bar
{u}(t)),\\
\delta\psi(t)= & \psi(t,\bar{X}(t),\bar{Y}(t),\bar{Z}(t),u(t))-\psi(t),\\
\delta\psi_{\kappa}(t)= & \psi_{\kappa}(t,\bar{X}(t),\bar{Y}(t),\bar
{Z}(t),u(t))-\psi_{\kappa}(t),\\
\delta\psi(t,\Delta)= & \psi(t,\bar{X}(t),\bar{Y}(t),\bar{Z}(t)+\Delta
(t),u(t))-\psi(t),\\
\delta\psi_{\kappa}(t,\Delta)= & \psi_{\kappa}(t,\bar{X}(t),\bar{Y}(t),\bar
{Z}(t)+\Delta(t),u(t))-\psi_{\kappa}(t),
\end{array}
\]
where $\Delta(\cdot)$ is an $\mathbb{F}$--adapted process. Moreover, denote
the gradient of $\psi$ with respect to $x$, $y$, $z$ by $D\psi$, and
$D^{2}\psi$ the Hessian matrix of $\psi$ with respect to $x$, $y$, $z$,%
\begin{align*}
D\psi(t)  &  =D\psi(t,\bar{X}(t),\bar{Y}(t),\bar{Z}(t),\bar{u}(t)),\\
D^{2}\psi(t)  &  =D^{2}\psi(t,\bar{X}(t),\bar{Y}(t),\bar{Z}(t),\bar{u}(t)).
\end{align*}

Let
\[%
\begin{array}
[c]{rl}%
\xi^{1,\epsilon}(t) & :=X^{\epsilon}(t)-\bar{X}(t);\text{ }\eta^{1,\epsilon
}(t):=Y^{\epsilon}(t)-\bar{Y}(t);\\
\zeta^{1,\epsilon}(t) & :=Z^{\epsilon}(t)-\bar{Z}(t);\text{ }\Theta
(t):=(\bar{X}(t),\bar{Y}(t),\bar{Z}(t));\\
\Theta^{\epsilon}(t) & :=(X^{\epsilon}(t),Y^{\epsilon}(t),Z^{\epsilon}(t)).
\end{array}
\]
We have
\begin{equation}
\left\{
\begin{array}
[c]{rl}%
d\xi^{1,\epsilon}(t)= & \left[  \tilde{b}_{x}^{\epsilon}(t)\xi^{1,\epsilon
}(t)+\tilde{b}_{y}^{\epsilon}(t)\eta^{1,\epsilon}(t)+\tilde{b}_{z}^{\epsilon
}(t)\zeta^{1,\epsilon}(t)+\delta b(t)I_{E_{\epsilon}}(t)\right]  dt\\
& +\left[  \tilde{\sigma}_{x}^{\epsilon}(t)\xi^{1,\epsilon}(t)+\tilde{\sigma
}_{y}^{\epsilon}(t)\eta^{1,\epsilon}(t)+\tilde{\sigma}_{z}^{\epsilon}%
(t)\zeta^{1,\epsilon}(t)+\delta\sigma(t)I_{E_{\epsilon}}(t)\right]  dB(t),\\
\xi^{1,\epsilon}(0)= & 0,
\end{array}
\right.  \label{ep-bar-x}%
\end{equation}%
\begin{equation}
\left\{
\begin{array}
[c]{rl}%
d\eta^{1,\epsilon}(t)= & -\left[  \left\langle \tilde{g}_{x}^{\epsilon}%
(t),\xi^{1,\epsilon}(t)\right\rangle +\tilde{g}_{y}^{\epsilon}(t)\eta
^{1,\epsilon}(t)+\tilde{g}_{z}^{\epsilon}(t)\zeta^{1,\epsilon}(t)+\delta
g(t)I_{E_{\epsilon}}(t)\right]  dt+\zeta^{1,\epsilon}(t)dB(t),\\
\eta^{1,\epsilon}(T)= & \left\langle \tilde{\phi}_{x}^{\epsilon}%
(T),\xi^{1,\epsilon}(T)\right\rangle ,
\end{array}
\right.  \label{ep-bar-y}%
\end{equation}
where
\begin{equation}
\tilde{b}_{x}^{\epsilon}(t)=\int_{0}^{1}b_{x}(t,\Theta(t)+\theta
(\Theta^{\epsilon}(t)-\Theta(t)),u^{\epsilon}(t))d\theta\label{eq-b-ep}%
\end{equation}
and $\tilde{b}_{y}^{\epsilon}(t)$, $\tilde{b}_{z}^{\epsilon}(t)$,
$\tilde{\sigma}_{x}^{\epsilon}(t)$, $\tilde{\sigma}_{y}^{\epsilon}(t)$,
$\tilde{\sigma}_{z}^{\epsilon}(t)$, $\tilde{g}_{x}^{\epsilon}(t)$, $\tilde
{g}_{y}^{\epsilon}(t)$, $\tilde{g}_{z}^{\epsilon}(t)$ and $\tilde{\phi}%
_{x}^{\epsilon}(T)$ are defined similarly. Consider the following linear
FBSDE
\begin{equation}
\left\{
\begin{array}
[c]{rl}%
d\hat{X}(t)= & \left[  \tilde{b}_{x}^{\epsilon}(t)\hat{X}(t)+\tilde{b}%
_{y}^{\epsilon}(t)\hat{Y}(t)+\tilde{b}_{z}^{\epsilon}(t)\hat{Z}(t)+L_{1}%
(t)\right]  dt+\left[  \tilde{\sigma}_{x}^{\epsilon}(t,\Delta)\hat
{X}(t)\right. \\
& \left.  +\tilde{\sigma}_{y}^{\epsilon}(t,\Delta)\hat{Y}(t)+\tilde{\sigma
}_{z}^{\epsilon}(t,\Delta)\hat{Z}(t)+L_{2}(t)\right]  dB(t),\\
d\hat{Y}(t)= & -\left[  \left\langle \tilde{g}_{x}^{\epsilon}(t),\hat
{X}(t)\right\rangle +\tilde{g}_{y}^{\epsilon}(t)\hat{Y}(t)+\tilde{g}%
_{z}^{\epsilon}(t)\hat{Z}(t)+L_{3}(t)\right]  dt+\hat{Z}(t)dB(t),\\
\hat{X}(0)= & x_{0},\ \hat{Y}(T)=\left\langle \tilde{\phi}_{x}^{\epsilon
}(T),\hat{X}(T)\right\rangle +\varsigma,
\end{array}
\right.  \label{eq-lfb}%
\end{equation}
where $\tilde{b}_{x}^{\epsilon}(t)$, $\tilde{b}_{y}^{\epsilon}(t)$, $\tilde
{b}_{z}^{\epsilon}(t)$, $\tilde{g}_{x}^{\epsilon}(t)$, $\tilde{g}%
_{y}^{\epsilon}(t)$, $\tilde{g}_{z}^{\epsilon}(t)$, $\tilde{\phi}%
_{x}^{\epsilon}(T)$ are defined as (\ref{eq-b-ep}) and $\tilde{\sigma}%
_{x}^{\epsilon}(t,\Delta)=\int_{0}^{1}\sigma_{x}(t,\Theta(t,\Delta
I_{E_{\epsilon}}(t))+\theta(\Theta^{\epsilon}(t)-\Theta(t,\Delta
I_{E_{\epsilon}}(t))),u^{\epsilon}(t))d\theta$ for any given $\Delta(\cdot)$,
$\tilde{\sigma}_{y}^{\epsilon}(t,\Delta)$, $\tilde{\sigma}_{z}^{\epsilon
}(t,\Delta)$ are defined similar to $\tilde{\sigma}_{x}^{\epsilon}(t,\Delta)$,
$L_{1}(\cdot)\in L_{\mathcal{F}}^{1,\beta}([0,T];\mathbb{R}^{n})$,
$L_{2}(\cdot)\in L_{\mathcal{F}}^{2,\beta}([0,T];\mathbb{R}^{n})$,
$L_{3}(\cdot)\in L_{\mathcal{F}}^{1,\beta}([0,T];\mathbb{R})$, $\varsigma\in
L_{\mathcal{F}_{T}}^{\beta}(\Omega;\mathbb{R})$. We impose the following assumption.

\begin{assumption}
\label{assm-ex-ln}For any $0<\epsilon<T$, $u^{\epsilon}(\cdot)\in
\mathcal{U}[0,T]$ and $\beta\in\lbrack2,8]$, the FBSDE (\ref{eq-lfb}) has a
unique solution $(\hat{X}(\cdot),\hat{Y}(\cdot),\hat{Z}(\cdot))\in
L_{\mathcal{F}}^{\beta}(\Omega;C([0,T],\mathbb{R}^{n}))\times L_{\mathcal{F}%
}^{\beta}(\Omega;C([0,T],\mathbb{R}))\times L_{\mathcal{F}}^{2,\beta
}([0,T];\mathbb{R})$.
\end{assumption}

\begin{assumption}
\label{assm-p-q}For any control $u^{\varepsilon}(\cdot)$ the following BSDE:%
\begin{equation}
\left\{
\begin{array}
[c]{rl}%
dp^{\epsilon}(t)= & -\{\tilde{g}_{x}^{\epsilon}(t)+\tilde{g}_{y}^{\epsilon
}(t)p^{\epsilon}(t)+\tilde{g}_{z}^{\epsilon}(t)K_{1}^{\epsilon}(t)+\tilde
{b}_{x}^{\epsilon}(t)^{\intercal}p^{\epsilon}(t)+\left\langle p^{\epsilon
}(t),\tilde{b}_{y}^{\epsilon}(t)\right\rangle p^{\epsilon}(t)+\left\langle
p^{\epsilon}(t),\tilde{b}_{z}^{\epsilon}(t)\right\rangle K_{1}^{\epsilon}(t)\\
& +\tilde{\sigma}_{x}^{\epsilon}(t,\Delta)^{\intercal}q^{\epsilon
}(t)+\left\langle q^{\epsilon}(t),\tilde{\sigma}_{y}^{\epsilon}(t,\Delta
)\right\rangle p^{\epsilon}(t)+\left\langle q^{\epsilon}(t),\tilde{\sigma}%
_{z}^{\epsilon}(t,\Delta)\right\rangle K_{1}^{\epsilon}(t)\}dt+q^{\epsilon
}(t)dB(t)\\
p^{\epsilon}(T)= & \tilde{\phi}_{x}^{\epsilon}(T),
\end{array}
\right.  \label{eq-p-q-b}%
\end{equation}
where
\[
K_{1}^{\epsilon}(t)=\left(  1-\left\langle p(t),\sigma_{z}(t,\Delta
)\right\rangle \right)  ^{-1}\left[  \sigma_{x}(t)^{\intercal}p^{\epsilon
}(t)+\left\langle p^{\epsilon}(t),\sigma_{y}(t)\right\rangle p^{\epsilon
}(t)+q^{\epsilon}(t)\right]
\]
has a unique solution $(p^{\varepsilon}(\cdot),q^{\varepsilon}(\cdot))\in
L_{\mathcal{F}}^{\infty}(\Omega;C([0,T],\mathbb{R}^{n}))\times L_{\mathcal{F}%
}^{\infty}([0,T];\mathbb{R}^{n})$ such that $\left\vert 1-\left\langle
p^{\varepsilon}(t),\gamma_{2}(t)\right\rangle \right\vert ^{-1}$ is uniformly bounded.
\end{assumption}

Note that $\tilde{\sigma}_{x}^{\epsilon}(t,\Delta)=\tilde{\sigma}%
_{x}^{\epsilon}(t)$ when $\Delta(\cdot)\equiv0$. Due to Assumption
\ref{assm-ex-ln}, there exists a unique solution $(\xi^{1,\epsilon}(\cdot
)$,$\eta^{1,\epsilon}(\cdot)$,$\zeta^{1,\epsilon}(\cdot))$ to (\ref{ep-bar-x})
and (\ref{ep-bar-y}).

\begin{lemma}
\label{est-epsilon-bar}Suppose that Assumptions \ref{assmlip}, \ref{assm-ex}%
,\ref{assm-ex-ln} and \ref{assm-p-q} hold. Then for any $2\leq\beta\leq8$ we
have
\begin{equation}
\mathbb{E}\left[  \sup\limits_{t\in\lbrack0,T]}\left(  |X^{\epsilon}%
(t)-\bar{X}(t)|^{\beta}+|Y^{\epsilon}(t)-\bar{Y}(t)|^{\beta}\right)  \right]
+\mathbb{E}\left[  \left(  \int_{0}^{T}|Z^{\epsilon}(t)-\bar{Z}(t)|^{2}%
dt\right)  ^{\frac{\beta}{2}}\right]  =O\left(  \epsilon^{\frac{\beta}{2}%
}\right)  .
\end{equation}

\end{lemma}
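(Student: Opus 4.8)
The plan is to recognize the variational system as a special instance of the generic linear FBSDE (\ref{eq-lfb}), to establish a data-dependent a priori $L^{\beta}$-estimate for that generic system, and then to show that for our particular data each term is $O(\epsilon^{\beta/2})$. First I would observe that $(\xi^{1,\epsilon},\eta^{1,\epsilon},\zeta^{1,\epsilon})$ solving (\ref{ep-bar-x})--(\ref{ep-bar-y}) is exactly the solution of (\ref{eq-lfb}) with $x_{0}=0$, $\varsigma=0$, $\Delta\equiv0$ (so that $\tilde{\sigma}_{\kappa}^{\epsilon}(t,\Delta)=\tilde{\sigma}_{\kappa}^{\epsilon}(t)$), and with source terms $L_{1}=\delta b(t)I_{E_{\epsilon}}(t)$, $L_{2}=\delta\sigma(t)I_{E_{\epsilon}}(t)$, $L_{3}=\delta g(t)I_{E_{\epsilon}}(t)$; by Assumption \ref{assm-ex-ln} this solution is unique and lies in the stated spaces. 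Thus it suffices to prove, for (\ref{eq-lfb}), the bound
\[
\mathbb{E}\left[\sup_{t\in[0,T]}\left(|\hat{X}(t)|^{\beta}+|\hat{Y}(t)|^{\beta}\right)\right]+\mathbb{E}\left[\left(\int_{0}^{T}|\hat{Z}(t)|^{2}dt\right)^{\beta/2}\right]\le C\left(|x_{0}|^{\beta}+\mathbb{E}|\varsigma|^{\beta}+\mathbb{E}\left[\left(\int_{0}^{T}|L_{1}|dt\right)^{\beta}\right]+\mathbb{E}\left[\left(\int_{0}^{T}|L_{2}|^{2}dt\right)^{\beta/2}\right]+\mathbb{E}\left[\left(\int_{0}^{T}|L_{3}|dt\right)^{\beta}\right]\right),
\]
and then to bound the right-hand side for our data.

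For the a priori estimate, the fully coupled structure (with $\hat{Z}$ appearing inside $\sigma$) blocks a direct Gronwall argument, so the plan is to decouple using the first-order adjoint $(p^{\epsilon},q^{\epsilon})$ of Assumption \ref{assm-p-q}. I would set $\tilde{Y}:=\hat{Y}-\langle p^{\epsilon},\hat{X}\rangle$ and apply It\^{o}'s formula. Because the adjoint equation (\ref{eq-p-q-b}) is built precisely so that the terms linear in $\hat{X}$ cancel, $\tilde{Y}$ solves a \emph{decoupled} linear BSDE whose generator is affine in $(\tilde{Y},\tilde{Z})$ with bounded coefficients plus the sources $\langle p^{\epsilon},L_{1}\rangle$, $\langle q^{\epsilon},L_{2}\rangle$ and $L_{3}$; here $\hat{Z}$ is eliminated in favour of $\hat{X}$ and $\tilde{Z}$ through the relation defining $K_{1}^{\epsilon}$, which requires inverting $1-\langle p^{\epsilon},\sigma_{z}\rangle$. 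Standard $L^{\beta}$ BSDE estimates (Burkholder--Davis--Gundy and Gronwall) then control $(\tilde{Y},\tilde{Z})$; substituting $\hat{Y}=\langle p^{\epsilon},\hat{X}\rangle+\tilde{Y}$ and the resulting expression for $\hat{Z}$ back into the forward equation yields a closed linear SDE for $\hat{X}$ with bounded coefficients, whence $\sup_{t}|\hat{X}|$ and $\int_{0}^{T}|\hat{Z}|^{2}dt$ are controlled. The uniform boundedness of $p^{\epsilon}$, $q^{\epsilon}$ and $|1-\langle p^{\epsilon},\gamma_{2}\rangle|^{-1}$ in Assumption \ref{assm-p-q} is exactly what keeps every coefficient bounded uniformly in $\epsilon$.

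It remains to estimate the data, using that $L_{1},L_{2},L_{3}$ are supported on $E_{\epsilon}$ with $|E_{\epsilon}|=\epsilon$. For the diffusion term the special structure of $\sigma$ in Assumption \ref{assmlip}(i), namely $|\sigma(t,0,0,z,u)-\sigma(t,0,0,z,u')|\le L(1+|u|+|u'|)$ uniformly in $z$, together with the boundedness of $\sigma_{x},\sigma_{y}$, gives $|\delta\sigma(t)|\le C(1+|\bar{X}(t)|+|\bar{Y}(t)|+|u(t)|+|\bar{u}(t)|)$ with \emph{no} $\bar{Z}$-dependence, so $\mathbb{E}[|\delta\sigma(t)|^{\beta}]$ is bounded uniformly in $t$ (using Assumption \ref{assm-ex} and $\sup_{t}\mathbb{E}|u(t)|^{8}<\infty$ with $\beta\le8$); H\"{o}lder in time then yields $\mathbb{E}[(\int_{E_{\epsilon}}|\delta\sigma|^{2}dt)^{\beta/2}]\le\epsilon^{\beta/2-1}\int_{E_{\epsilon}}\mathbb{E}[|\delta\sigma|^{\beta}]dt=O(\epsilon^{\beta/2})$. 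For the drift and generator there is no such special structure, so $|\delta b(t)|+|\delta g(t)|\le C(1+|\bar{X}|+|\bar{Y}|+|\bar{Z}|+|u|+|\bar{u}|)$ does contain $\bar{Z}$; but these enter only through a single time integral, and the $\bar{Z}$-part is handled by $\int_{E_{\epsilon}}|\bar{Z}|dt\le\epsilon^{1/2}(\int_{0}^{T}|\bar{Z}|^{2}dt)^{1/2}$, giving $O(\epsilon^{\beta/2})$, while the remaining parts give $O(\epsilon^{\beta})$ by the same H\"{o}lder computation. Combining with the a priori estimate (and $x_{0}=\varsigma=0$) yields $O(\epsilon^{\beta/2})$.

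The main obstacle is the a priori estimate for the coupled linear FBSDE: making the $p^{\epsilon}$-decoupling rigorous while keeping the coefficient of $\hat{Z}$ inside $\sigma$ under control. The well-definedness and uniform boundedness of $K_{1}^{\epsilon}$ rest on $|1-\langle p^{\epsilon},\sigma_{z}\rangle|^{-1}$ being uniformly bounded, which is precisely the content of Assumption \ref{assm-p-q}; this is the obstruction that the alternative second kind of assumptions ($\sigma$ linear in $z$) is designed to bypass when $q^{\epsilon}$ fails to be bounded. A secondary point, easy but essential, is that all constants must be uniform in $\epsilon$, which holds because the frozen coefficients $\tilde{b}_{\kappa}^{\epsilon},\tilde{\sigma}_{\kappa}^{\epsilon},\tilde{g}_{\kappa}^{\epsilon}$ are bounded by Assumption \ref{assmlip}(i) independently of $\epsilon$ and the adjoint bounds are assumed uniform.
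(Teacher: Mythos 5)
Your proposal is correct and follows essentially the same route as the paper: the paper likewise views $(\xi^{1,\epsilon},\eta^{1,\epsilon},\zeta^{1,\epsilon})$ as an instance of the linear FBSDE (\ref{eq-lfb}), invokes the $L^{\beta}$ a priori estimate of Lemma \ref{est-l} (itself proved in the Appendix by exactly the decoupling $\hat{Y}=\langle p,\hat{X}\rangle+\varphi$ you describe, resting on the bounded adjoint and the bounded inverse of $1-\langle p,\gamma_{2}\rangle$), and then bounds the data terms using the $z$-uniform bound on $\delta\sigma$ from Assumption \ref{assmlip}(i) together with H\"older in time and $\int_{E_{\epsilon}}|\bar{Z}|dt\leq\epsilon^{1/2}(\int_{0}^{T}|\bar{Z}|^{2}dt)^{1/2}$ for the drift and generator. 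No gaps.
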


\begin{proof}
Note that $\left(  \xi^{1,\epsilon}(t),\eta^{1,\epsilon}(t),\zeta^{1,\epsilon
}(t)\right)  $ is the solution to (\ref{ep-bar-x}) and (\ref{ep-bar-y}), and
\[
\mathbb{E}\left[  \left(  \int_{E_{\epsilon}}|u(t)|dt\right)  ^{\beta}\right]
\leq\epsilon^{\beta-1}\mathbb{E}\left[  \int_{E_{\epsilon}}|u(t)|^{\beta
}dt\right]  .
\]
Then, by Lemma \ref{est-l} in Appendix, we get
\[%
\begin{array}
[c]{l}%
\mathbb{E}\left[  \sup\limits_{t\in\lbrack0,T]}\left(  |\xi^{1,\epsilon
}(t)|^{\beta}+|\eta^{1,\epsilon}(t)|^{\beta}\right)  +\left(  \int_{0}%
^{T}|\zeta^{1,\epsilon}(t)|^{2}dt\right)  ^{\frac{\beta}{2}}\right] \\
\ \ \leq C\mathbb{E}\left[  \left(  \int_{0}^{T}\left(  |\delta
b(t)|I_{E_{\epsilon}}(t)+|\delta g(t)|I_{E_{\epsilon}}(t)\right)  dt\right)
^{\beta}+\left(  \int_{0}^{T}|\delta\sigma(t)|^{2}I_{E_{\epsilon}%
}(t)dt\right)  ^{\frac{\beta}{2}}\right] \\
\ \ \leq C\mathbb{E}\left[  \left(  \int_{E_{\epsilon}}(1+|\bar{X}%
(t)|+|\bar{Y}(t)|+|\bar{Z}(t)|+|u(t)|+|\bar{u}(t)|)dt\right)  ^{\beta}\right.
\\
\text{ \ \ \ \ \ \ \ }\left.  +\left(  \int_{E_{\epsilon}}(1+|\bar{X}%
(t)|^{2}+|\bar{Y}(t)|^{2}+|u(t)|^{2}+|\bar{u}(t)|^{2})dt\right)  ^{\frac
{\beta}{2}}\right] \\
\ \ \leq C\left(  \epsilon^{\beta}+\epsilon^{\frac{\beta}{2}}\right)  \left(
1+\sup\limits_{t\in\lbrack0,T]}\mathbb{E}\left[  |\bar{X}(t)|^{\beta}+|\bar
{Y}(t)|^{\beta}+|u(t)|^{\beta}+|\bar{u}(t)|^{\beta}\right]  \right)
+C\epsilon^{\frac{\beta}{2}}\mathbb{E}\left[  \left(  \int_{0}^{T}|\bar
{Z}(t)|^{2}dt\right)  ^{\frac{\beta}{2}}\right] \\
\ \ \leq C\epsilon^{\frac{\beta}{2}}.
\end{array}
\]

\end{proof}

\subsection{First-order expansion}

We introduce the following adjoint equation satisfied by $\left(  p,q\right)
$:%
\begin{equation}
\left\{
\begin{array}
[c]{l}%
dp(t)\\
=-\left\{  g_{x}(t)+g_{y}(t)p(t)+g_{z}(t)K_{1}(t)+b_{x}(t)^{\intercal
}p(t)+\left\langle p(t),b_{y}(t)\right\rangle p(t)\right. \\
\text{ \ }\left.  +\left\langle p(t),b_{z}(t)\right\rangle K_{1}(t)+\sigma
_{x}(t)^{\intercal}q(t)+\left\langle q(t),\sigma_{y}(t)\right\rangle
p(t)+\left\langle q(t),\sigma_{z}(t)\right\rangle K_{1}(t)\right\}
dt+q(t)dB(t),\\
p(T)=\phi_{x}(\bar{X}(T)),
\end{array}
\right.  \label{eq-p}%
\end{equation}
where%
\begin{equation}
K_{1}(t)=\left(  1-\left\langle p(t),\sigma_{z}(t)\right\rangle \right)
^{-1}\left[  \sigma_{x}(t)^{\intercal}p(t)+\left\langle p(t),\sigma
_{y}(t)\right\rangle p(t)+q(t)\right]  . \label{def-k1}%
\end{equation}
We first study the following algebra equation
\begin{equation}
\Delta(t)=p(t)(\sigma(t,\bar{X}(t),\bar{Y}(t),\bar{Z}(t)+\Delta
(t),u(t))-\sigma(t,\bar{X}(t),\bar{Y}(t),\bar{Z}(t),\bar{u}(t))),\;t\in
\lbrack0,T], \label{def-delt}%
\end{equation}
where $u\left(  \cdot\right)  $ is a given admissible control.

\begin{assumption}
\label{assm-delta}Assume that equation (\ref{def-delt}) has a unique solution
$\Delta\left(  \cdot\right)  $, and it satisfies%
\begin{equation}
|\Delta(t)|\leq C(1+|\bar{X}(t)|+|\bar{Y}(t)|+|u(t)|+|\bar{u}(t)|),\text{
}t\in\lbrack0,T], \label{del-new-3}%
\end{equation}
where $C$ is a constant depending on $\beta_{0}$, $L$, $\left\Vert \psi
_{x}\right\Vert _{\infty}$,$\left\Vert \psi_{y}\right\Vert _{\infty}$,
$\left\Vert \psi_{z}\right\Vert _{\infty}$, $T$.
\end{assumption}

Now we introduce the first-order variational equation:
\begin{equation}
\left\{
\begin{array}
[c]{rl}%
dX_{1}(t)= & \left[  b_{x}(t)X_{1}(t)+b_{y}(t)Y_{1}(t)+b_{z}(t)(Z_{1}%
(t)-\Delta(t)I_{E_{\epsilon}}(t))\right]  dt\\
& +\left[  \sigma_{x}(t)X_{1}(t)+\sigma_{y}(t)Y_{1}(t)+\sigma_{z}%
(t)(Z_{1}(t)-\Delta(t)I_{E_{\epsilon}}(t))+\delta\sigma(t,\Delta
)I_{E_{\epsilon}}(t)\right]  dB(t),\\
X_{1}(0)= & 0,
\end{array}
\right.  \label{new-form-x1}%
\end{equation}
and
\begin{equation}
\left\{
\begin{array}
[c]{l}%
dY_{1}(t)=-\left[  \left\langle g_{x}(t),X_{1}(t)\right\rangle +g_{y}%
(t)Y_{1}(t)+g_{z}(t)(Z_{1}(t)-\Delta(t)I_{E_{\epsilon}}(t))-\left\langle
q(t),\delta\sigma(t,\Delta)\right\rangle I_{E_{\epsilon}}(t)\right]  dt\\
\text{ \ \ \ \ \ \ \ \ \ \ }+Z_{1}(t)dB(t),\\
Y_{1}(T)=\left\langle \phi_{x}(\bar{X}(T)),X_{1}(T)\right\rangle .
\end{array}
\right.  \label{new-form-y1}%
\end{equation}

By Assumption \ref{assm-ex-ln}, the above FBSDE has a unique solution
$(X_{1}(\cdot),Y_{1}(\cdot),Z_{1}(\cdot))$.

\begin{lemma}
\label{lemma-y1}Suppose that Assumptions \ref{assmlip}, \ref{assm-ex},
\ref{assm-ex-ln}, \ref{assm-p-q} and \ref{assm-delta} hold. Then we have
\begin{align*}
Y_{1}(t)  &  =\left\langle p(t),X_{1}(t)\right\rangle ,\\
Z_{1}(t)  &  =\left\langle K_{1}(t),X_{1}(t)\right\rangle +\Delta
(t)I_{E_{\epsilon}}(t),
\end{align*}
where $p(\cdot)$ is the solution of (\ref{eq-p}) and $K_{1}\left(
\cdot\right)  $ is given in (\ref{def-k1}).
\end{lemma}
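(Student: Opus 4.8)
The plan is to verify the two identities by It\^o's formula and then conclude with the uniqueness of a linear BSDE. Since $(X_1,Y_1,Z_1)$ is the solution of the coupled system (\ref{new-form-x1})--(\ref{new-form-y1}) and $(p,q)$ is the bounded solution of the adjoint equation (\ref{eq-p}), I would set $\hat{Y}(t):=\langle p(t),X_1(t)\rangle$ and compute $d\hat{Y}$ by applying It\^o's product rule to $p$ and $X_1$. Writing $\Sigma_1(t):=\sigma_x(t)X_1(t)+\sigma_y(t)Y_1(t)+\sigma_z(t)(Z_1(t)-\Delta(t)I_{E_{\epsilon}}(t))+\delta\sigma(t,\Delta)I_{E_{\epsilon}}(t)$ for the diffusion coefficient of (\ref{new-form-x1}), the martingale part of $d\hat{Y}$ is $[\langle q(t),X_1(t)\rangle+\langle p(t),\Sigma_1(t)\rangle]dB(t)$, so the natural candidate for the second component is $\hat{Z}(t):=\langle q(t),X_1(t)\rangle+\langle p(t),\Sigma_1(t)\rangle$, while the drift of $d\hat{Y}$ collects the drift of $p$ from (\ref{eq-p}), the drift of $X_1$ from (\ref{new-form-x1}) paired with $p$, and the It\^o cross term $\langle q(t),\Sigma_1(t)\rangle$. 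Since $p,q$ are bounded (Assumption \ref{assm-p-q}) and $(X_1,Y_1,Z_1)$ lies in the spaces of Assumption \ref{assm-ex-ln}, all products are integrable and It\^o's formula is justified.

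The heart of the argument is the algebraic simplification of this drift. First, the terms $\langle b_x(t)^{\intercal}p(t),X_1(t)\rangle$ and $-\langle X_1(t),b_x(t)^{\intercal}p(t)\rangle$ cancel, and likewise the pair involving $\sigma_x(t)^{\intercal}q(t)$; the remaining terms containing $b_y,\sigma_y$ regroup with the factor $(Y_1(t)-\langle p(t),X_1(t)\rangle)$ and those containing $b_z,\sigma_z$ regroup with the factor $(Z_1(t)-\Delta(t)I_{E_{\epsilon}}(t)-\langle K_1(t),X_1(t)\rangle)$. Second, I would invoke the algebra equation (\ref{def-delt}), which states precisely $\langle p(t),\delta\sigma(t,\Delta)\rangle=\Delta(t)$, to rewrite the $\delta\sigma$ contribution in $\hat{Z}$. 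Third, using the definition (\ref{def-k1}) of $K_1$ together with the uniform invertibility of $1-\langle p(t),\sigma_z(t)\rangle$ guaranteed by Assumption \ref{assm-p-q}, I would solve for $\hat{Z}$ and obtain the key identity $(1-\langle p(t),\sigma_z(t)\rangle)\big(Z_1(t)-\Delta(t)I_{E_{\epsilon}}(t)-\langle K_1(t),X_1(t)\rangle\big)=(Z_1(t)-\hat{Z}(t))+\langle p(t),\sigma_y(t)\rangle(Y_1(t)-\langle p(t),X_1(t)\rangle)$, which expresses the ``$z$-mismatch'' as a bounded linear combination of the two differences $Y_1-\hat{Y}$ and $Z_1-\hat{Z}$.

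To conclude I would form $\bar{Y}:=Y_1-\hat{Y}$ and $\bar{Z}:=Z_1-\hat{Z}$. Subtracting $d\hat{Y}$ from the equation (\ref{new-form-y1}) for $Y_1$, and substituting the two regrouped coefficients together with the $z$-mismatch identity, all the inhomogeneous terms (those in $\langle g_x,X_1\rangle$ and in $\delta\sigma$) cancel, so $(\bar{Y},\bar{Z})$ solves a linear BSDE whose generator is a bounded linear function of $(\bar{Y},\bar{Z})$ and whose terminal value is $\bar{Y}(T)=\langle p(T)-\phi_x(\bar{X}(T)),X_1(T)\rangle=0$, using $p(T)=\phi_x(\bar{X}(T))$. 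By uniqueness for linear BSDEs with bounded coefficients, $\bar{Y}\equiv0$ and $\bar{Z}\equiv0$; hence $Y_1(t)=\langle p(t),X_1(t)\rangle$, and feeding $\bar{Y}=\bar{Z}=0$ back into the $z$-mismatch identity gives $Z_1(t)=\langle K_1(t),X_1(t)\rangle+\Delta(t)I_{E_{\epsilon}}(t)$.

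The step I expect to be the main obstacle is the third one above, because the coupling is genuinely implicit: the diffusion of the forward equation (\ref{new-form-x1}) already contains the unknowns $Y_1$ and $Z_1$, so one cannot substitute the conjectured identities prematurely without circular reasoning. The resolution is to keep $Y_1,Z_1$ as the true solution components throughout the It\^o computation and let the differences $\bar{Y},\bar{Z}$ emerge on their own, then use the explicit form of $K_1$ in (\ref{def-k1}) and the bound on $(1-\langle p,\sigma_z\rangle)^{-1}$ to invert the algebraic relation and exhibit the closed linear BSDE. Once the correct quantities $\bar{Y}$ and $Z_1-\Delta I_{E_{\epsilon}}-\langle K_1,X_1\rangle$ are identified, the remaining termwise cancellations and regroupings are routine.
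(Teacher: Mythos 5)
Your proof is correct, and the computational core is the same as the paper's: the paper proves this lemma by citing its Appendix Lemma \ref{appen-th-linear-fbsde}, which likewise applies It\^o's formula to $\left\langle p(t),\hat{X}(t)\right\rangle$, compares drift and diffusion coefficients, uses the algebra relation $\left\langle p(t),\delta\sigma(t,\Delta)\right\rangle=\Delta(t)$ from (\ref{def-delt}), and inverts $1-\left\langle p(t),\sigma_{z}(t)\right\rangle$ to isolate the $\hat{Z}$-component -- exactly your steps one through three, and your key identity $(1-\left\langle p,\sigma_{z}\right\rangle)(Z_{1}-\Delta I_{E_{\epsilon}}-\left\langle K_{1},X_{1}\right\rangle)=\bar{Z}+\left\langle p,\sigma_{y}\right\rangle\bar{Y}$ checks out. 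Where you genuinely diverge is the concluding uniqueness argument. The paper constructs a candidate triple with the affine structure by first solving the decoupled forward SDE (\ref{appen-eq-x}) and then defining $\tilde{Y},\tilde{Z}$ through (\ref{appen-rel}), verifies it solves the linear FBSDE, and identifies it with $(X_{1},Y_{1},Z_{1})$ via the uniqueness of the coupled FBSDE guaranteed by Assumption \ref{assm-ex-ln}. You instead keep the actual solution throughout, show that $(\bar{Y},\bar{Z})=(Y_{1}-\left\langle p,X_{1}\right\rangle,\,Z_{1}-\hat{Z})$ solves a homogeneous linear BSDE with bounded coefficients and zero terminal value (using $p(T)=\phi_{x}(\bar{X}(T))$), and invoke uniqueness for that scalar linear BSDE. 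Your route buys a cleaner logical structure -- linear BSDE uniqueness is unconditional, and you avoid separately establishing solvability of the decoupled forward SDE and any risk of circularity from the implicit coupling, which you correctly flag; the paper's route buys reusability, since the same appendix lemma with a nonzero $\varphi$ is what drives the second-order relation in Lemma \ref{relation-y2} and the unbounded-$q$ case via Lemma \ref{appen-th-linear-fbsde-unb}. Both arguments rely on the boundedness of $p$, $q$ and of $(1-\left\langle p,\sigma_{z}\right\rangle)^{-1}$ from Assumption \ref{assm-p-q} to make the coefficients of the resulting linear equation bounded, so no hypothesis is saved or added either way.
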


\begin{proof}
From Lemma \ref{appen-th-linear-fbsde} in Appendix, we can obtain the desired results.
\end{proof}

Let
\[%
\begin{array}
[c]{rl}%
\xi^{2,\epsilon}(t) & :=X^{\epsilon}(t)-\bar{X}(t)-X_{1}(t);\text{ }%
\eta^{2,\epsilon}(t):=Y^{\epsilon}(t)-\bar{Y}(t)-Y_{1}(t);\\
\zeta^{2,\epsilon}(t) & :=Z^{\epsilon}(t)-\bar{Z}(t)-Z_{1}(t);\text{ }%
\Theta(t):=(\bar{X}(t),\bar{Y}(t),\bar{Z}(t)).
\end{array}
\]
Then we have the following estimates.

\begin{lemma}
\label{est-one-order}Suppose Assumptions \ref{assmlip}, \ref{assm-ex},
\ref{assm-ex-ln}, \ref{assm-p-q} and \ref{assm-delta} hold. Then for any
$2\leq\beta\leq8$, we have the following estimates
\begin{equation}
\mathbb{E}\left[  \sup\limits_{t\in\lbrack0,T]}\left(  |X_{1}(t)|^{\beta
}+|Y_{1}(t)|^{\beta}\right)  \right]  +\mathbb{E}\left[  \left(  \int_{0}%
^{T}|Z_{1}(t)|^{2}dt\right)  ^{\beta/2}\right]  =O(\epsilon^{\beta/2}),
\label{est-x1-y1}%
\end{equation}%
\[
\mathbb{E}\left[  \sup\limits_{t\in\lbrack0,T]}\left(  |X^{\epsilon}%
(t)-\bar{X}(t)-X_{1}(t)|^{2}+|Y^{\epsilon}(t)-\bar{Y}(t)-Y_{1}(t)|^{2}\right)
\right]  +\mathbb{E}\left[  \int_{0}^{T}|Z^{\epsilon}(t)-\bar{Z}%
(t)-Z_{1}(t)|^{2}dt\right]  =O(\epsilon^{2}),
\]%
\[
\mathbb{E}\left[  \sup\limits_{t\in\lbrack0,T]}(|X^{\epsilon}(t)-\bar
{X}(t)-X_{1}(t)|^{4}+|Y^{\epsilon}(t)-\bar{Y}(t)-Y_{1}(t)|^{4})\right]
+\mathbb{E}\left[  \left(  \int_{0}^{T}|Z^{\epsilon}(t)-\bar{Z}(t)-Z_{1}%
(t)|^{2}dt\right)  ^{2}\right]  =o(\epsilon^{2}).
\]

\end{lemma}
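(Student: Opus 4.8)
The overall strategy is to reduce each of the three estimates to the linear-FBSDE a priori estimate (Lemma~\ref{est-l}), applied to a linear system whose nonhomogeneous data is either supported on $E_\epsilon$ (so that its smallness comes from $|E_\epsilon|=\epsilon$ together with the bound on $\Delta$ from Assumption~\ref{assm-delta}) or is a product of two separately small factors.

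For (\ref{est-x1-y1}) I would note that (\ref{new-form-x1})--(\ref{new-form-y1}) is a linear FBSDE with bounded coefficients $b_x(t),\ldots,\sigma_z(t)$, terminal datum $\varsigma=0$, and nonhomogeneous terms $L_1(t)=-b_z(t)\Delta(t)I_{E_\epsilon}(t)$, $L_2(t)=\left(\delta\sigma(t,\Delta)-\sigma_z(t)\Delta(t)\right)I_{E_\epsilon}(t)$ and $L_3(t)=-\left(g_z(t)\Delta(t)+\langle q(t),\delta\sigma(t,\Delta)\rangle\right)I_{E_\epsilon}(t)$, all supported on $E_\epsilon$. Feeding this into Lemma~\ref{est-l}, bounding $|\Delta(t)|$ by Assumption~\ref{assm-delta}, using the boundedness of $b_z,\sigma_z,g_z$ and the integrability of $q$ from Assumption~\ref{assm-p-q}, and then applying H\"older in the form $\left(\int_{E_\epsilon}|f|\,dt\right)^{\beta}\leq\epsilon^{\beta-1}\int_{E_\epsilon}|f|^{\beta}\,dt$ (and its $L^2$ analogue), the right-hand side is $O(\epsilon^{\beta})+O(\epsilon^{\beta/2})=O(\epsilon^{\beta/2})$. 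This repeats, almost verbatim, the computation in the proof of Lemma~\ref{est-epsilon-bar}.

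For the second estimate I would subtract (\ref{new-form-x1})--(\ref{new-form-y1}) from (\ref{ep-bar-x})--(\ref{ep-bar-y}) and substitute $\xi^{1,\epsilon}=\xi^{2,\epsilon}+X_1$, $\eta^{1,\epsilon}=\eta^{2,\epsilon}+Y_1$, $\zeta^{1,\epsilon}=\zeta^{2,\epsilon}+Z_1$. This yields a linear FBSDE for $(\xi^{2,\epsilon},\eta^{2,\epsilon},\zeta^{2,\epsilon})$ with the same bounded coefficients $\tilde b_x^\epsilon,\ldots$ and nonhomogeneous terms of two types: coefficient-difference products such as $(\tilde b_x^\epsilon(t)-b_x(t))X_1(t)$, $(\tilde\sigma_z^\epsilon(t)-\sigma_z(t))Z_1(t)$, etc., and residual spike terms supported on $E_\epsilon$ (the discrepancy between the two $\delta\sigma$-terms plus the Taylor remainder in $\Delta$). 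Off $E_\epsilon$ one has $u^\epsilon=\bar u$, so each coefficient difference is pointwise Lipschitz-controlled by $|\Theta^\epsilon(t)-\Theta(t)|$ through the boundedness of the second derivatives (Assumption~\ref{assmlip}(ii)), which Lemma~\ref{est-epsilon-bar} bounds at order $\epsilon^{1/2}$ in the relevant norm, while $X_1,Y_1,Z_1$ are $O(\epsilon^{1/2})$ by (\ref{est-x1-y1}). A Cauchy--Schwarz splitting of each product into its two independent $O(\epsilon^{1/2})$ factors gives $O(\epsilon^2)$, and the spike residuals are of even higher order by the argument of the previous step; Lemma~\ref{est-l} then assembles these into the claimed $O(\epsilon^2)$ bound.

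The strict $o(\epsilon^2)$ in the third estimate is the main obstacle. The decomposition is identical, but boundedness of the coefficient differences no longer suffices: one must use that $\tilde b_x^\epsilon(t)-b_x(t)\to0$ (and similarly for the other coefficients) as $\epsilon\to0$, which follows from the \emph{continuity}---not merely boundedness---of the first and second derivatives of $b,\sigma,g,\phi$ in Assumption~\ref{assmlip} together with $\Theta^\epsilon\to\Theta$. Concretely, for a leading term I would write $\mathbb{E}\big[\big(\int_0^T|\tilde b_x^\epsilon(t)-b_x(t)|\,|X_1(t)|\,dt\big)^4\big]\leq\big(\mathbb{E}\sup_t|X_1(t)|^8\big)^{1/2}\big(\mathbb{E}\big(\int_0^T|\tilde b_x^\epsilon(t)-b_x(t)|\,dt\big)^8\big)^{1/2}$; the first factor is $O(\epsilon^2)$ by (\ref{est-x1-y1}) with $\beta=8$, and the second tends to $0$ by dominated convergence, so the product is $o(\epsilon^2)$. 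This is exactly where the $\beta=8$ moment bounds of Assumptions~\ref{assm-ex}--\ref{assm-ex-ln} are consumed. I expect the genuinely delicate point to be the $Z$-component terms $(\tilde\sigma_z^\epsilon-\sigma_z)Z_1$ and $(\tilde b_z^\epsilon-b_z)Z_1$, because $\zeta^{1,\epsilon}$ and $Z_1$ are controlled only in the $L^{2,\beta}$ time-norm and cannot be pulled out in $\sup_t$; here I would use Lemma~\ref{lemma-y1} to replace $Z_1$ by $\langle K_1,X_1\rangle+\Delta I_{E_\epsilon}$, transferring the control back to $\sup_t|X_1|$, the treatment of $K_1$ (bounded under the first kind of assumptions, merely integrable under the second) being precisely where the two regimes of the paper diverge.
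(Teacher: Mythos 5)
Your proposal is correct and follows essentially the same route as the paper: apply Lemma \ref{est-l} to the first-order variational system to get (\ref{est-x1-y1}), then to the linear system satisfied by $(\xi^{2,\epsilon},\eta^{2,\epsilon},\zeta^{2,\epsilon})$, with the term-by-term estimates carried out exactly as in \cite{Hu-JX}. In particular you correctly isolate the one genuinely delicate point---that in the diffusion's nonhomogeneous term one must work with $\langle K_1(t),X_1(t)\rangle$ rather than $Z_1(t)$, via Lemma \ref{lemma-y1}---which is precisely how the paper's $B_1^{\epsilon}$ is written.
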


\begin{proof}
{By Lemma \ref{est-l} in Appendix,} we have
\[%
\begin{array}
[c]{rl}
& \mathbb{E}\left[  \sup\limits_{t\in\lbrack0,T]}\left(  |X_{1}(t)|^{\beta
}+|Y_{1}(t)|^{\beta}\right)  +\left(  \int_{0}^{T}|Z_{1}(t)|^{2}dt\right)
^{\beta/2}\right] \\
& \leq C\mathbb{E}\left[  \left(  \int_{0}^{T}\left\vert \delta\sigma
(t,\Delta)+\Delta(t)\right\vert ^{2}I_{E_{\epsilon}}(t)dt\right)
^{\frac{\beta}{2}}\right] \\
& \leq C\mathbb{E}\left[  \left(  \int_{E_{\epsilon}}\left(  1+|\bar
{X}(t)|^{2}+\left\vert \bar{Y}(t)\right\vert ^{2}+\left\vert \bar{u}%
(t)|^{2}+|u(t)\right\vert ^{2}\right)  dt\right)  ^{\beta/2}\right] \\
& \leq C\epsilon^{^{\beta/2}}.
\end{array}
\]
We use the notations $\xi^{1,\epsilon}(t)$, $\eta^{1,\epsilon}(t)$ and
$\zeta^{1,\epsilon}(t)$ in the proof of Lemma \ref{est-epsilon-bar} and
\[%
\begin{array}
[c]{rl}%
\Theta(t,\Delta I_{E_{\epsilon}}) & :=(\bar{X}(t),\bar{Y}(t),\bar{Z}%
(t)+\Delta(t)I_{E_{\epsilon}}(t));\text{ }\Theta^{\epsilon}(t):=(X^{\epsilon
}(t),Y^{\epsilon}(t),Z^{\epsilon}(t)).
\end{array}
\]
Note that
\[%
\begin{array}
[c]{l}%
\delta\sigma(t,\Delta)I_{E_{\epsilon}}(t)=\sigma(t,\bar{X}(t),\bar{Y}%
(t),\bar{Z}(t)+\Delta(t)I_{E_{\epsilon}}(t),u^{\epsilon}(t))-\sigma
(t)=\sigma(t,\Theta(t,\Delta I_{E_{\epsilon}}(t)),u^{\epsilon}(t))-\sigma(t).
\end{array}
\]
We have%
\[%
\begin{array}
[c]{l}%
\sigma(t,\Theta^{\epsilon}(t),u^{\epsilon}(t))-\sigma(t)-\delta\sigma
(t,\Delta)I_{E_{\epsilon}}(t)\\
=\sigma(t,\Theta^{\epsilon}(t),u^{\epsilon}(t))-\sigma(t,\Theta(t,\Delta
I_{E_{\epsilon}}(t)),u^{\epsilon}(t))\\
=\tilde{\sigma}_{x}^{\epsilon}(t)\left(  X^{\epsilon}(t)-\bar{X}(t)\right)
+\tilde{\sigma}_{y}^{\epsilon}(t)(Y^{\epsilon}(t)-\bar{Y}(t))+\tilde{\sigma
}_{z}^{\epsilon}(t)(Z^{\epsilon}(t)-\bar{Z}(t)-\Delta(t)I_{E_{\epsilon}}(t)),
\end{array}
\]
where
\[
\tilde{\sigma}_{x}^{\epsilon}(t,\Delta)=\int_{0}^{1}\sigma_{x}(t,\Theta
(t,\Delta I_{E_{\epsilon}}(t))+\theta(\Theta^{\epsilon}(t)-\Theta(t,\Delta
I_{E_{\epsilon}}(t))),u^{\epsilon}(t))d\theta,
\]
and $\tilde{\sigma}_{y}^{\epsilon}(t,\Delta)$, $\tilde{\sigma}_{z}^{\epsilon
}(t,\Delta)$\ are defined similarly. \ 

Recall that $\tilde{b}_{x}^{\epsilon}(t)$, $\tilde{b}_{y}^{\epsilon}(t)$,
$\tilde{b}_{z}^{\epsilon}(t)$, $\tilde{g}_{x}^{\epsilon}(t)$, $\tilde{g}%
_{y}^{\epsilon}(t)$, $\tilde{g}_{z}^{\epsilon}(t)$ and $\tilde{\phi}%
_{x}^{\epsilon}(T)$ are defined in Lemma \ref{est-epsilon-bar}. Then,%
\begin{equation}
\left\{
\begin{array}
[c]{ll}%
d\xi^{2,\epsilon}(t)= & \left[  \tilde{b}_{x}^{\epsilon}(t)\xi^{2,\epsilon
}(t)+\tilde{b}_{y}^{\epsilon}(t)\eta^{2,\epsilon}(t)+\tilde{b}_{z}^{\epsilon
}(t)\zeta^{2,\epsilon}(t)+A_{1}^{\epsilon}(t)\right]  dt\\
& +\left[  \tilde{\sigma}_{x}^{\epsilon}(t,\Delta)\xi^{2,\epsilon}%
(t)+\tilde{\sigma}_{y}^{\epsilon}(t,\Delta)\eta^{2,\epsilon}(t)+\tilde{\sigma
}_{z}^{\epsilon}(t,\Delta)\zeta^{2,\epsilon}(t))+B_{1}^{\epsilon}(t)\right]
dB(t),\\
\xi^{2,\epsilon}(0)= & 0,
\end{array}
\right.  \label{deri-x-x1}%
\end{equation}%
\[
\left\{
\begin{array}
[c]{rl}%
d\eta^{2,\epsilon}(t)= & -\left[  \left\langle \tilde{g}_{x}^{\epsilon}%
(t),\xi^{2,\epsilon}(t)\right\rangle +\tilde{g}_{y}^{\epsilon}(t)\eta
^{2,\epsilon}(t)+\tilde{g}_{z}^{\epsilon}(t)\zeta^{2,\epsilon}(t)+C_{1}%
^{\epsilon}(t)\right]  dt+\zeta^{2,\epsilon}(t)dB(t),\\
\eta^{2,\epsilon}(T)= & \left\langle \tilde{\phi}_{x}^{\epsilon}%
(T),\xi^{2,\epsilon}(T)\right\rangle +D_{1}^{\epsilon}(T),
\end{array}
\right.
\]
{where%
\[%
\begin{array}
[c]{rl}%
A_{1}^{\epsilon}(t)= & (\tilde{b}_{x}^{\epsilon}(t)-b_{x}(t))X_{1}%
(t)+(\tilde{b}_{y}^{\epsilon}(t)-b_{y}(t))Y_{1}(t)+(\tilde{b}_{z}^{\epsilon
}(t)-b_{z}(t))Z_{1}(t)\\
& +b_{z}(t)\Delta(t)I_{E_{\epsilon}}(t)+\delta b(t)I_{E_{\epsilon}}(t),\\
B_{1}^{\epsilon}(t)= & (\tilde{\sigma}_{x}^{\epsilon}(t,\Delta)-\sigma
_{x}(t))X_{1}(t)+(\tilde{\sigma}_{y}^{\epsilon}(t,\Delta)-\sigma_{y}%
(t))Y_{1}(t)+(\tilde{\sigma}_{z}^{\epsilon}(t,\Delta)-\sigma_{z}%
(t))\left\langle K_{1}(t),X_{1}(t)\right\rangle ,\\
C_{1}^{\epsilon}(t)= & \left\langle (\tilde{g}_{x}^{\epsilon}(t)-g_{x}%
(t),X_{1}(t)\right\rangle +(\tilde{g}_{y}^{\epsilon}(t)-g_{y}(t))Y_{1}%
(t)+(\tilde{g}_{z}^{\epsilon}(t)-g_{z}(t))Z_{1}(t)+\delta g(t)I_{E_{\epsilon}%
}(t)\\
& +g_{z}(t)\Delta(t)I_{E_{\epsilon}}(t)+\left\langle q(t),\delta
\sigma(t,\Delta)\right\rangle I_{E_{\epsilon}}(t),\\
D_{1}^{\epsilon}(T)= & \left\langle \tilde{\phi}_{x}^{\epsilon}(T)-\phi
_{x}(\bar{X}(T)),X_{1}(T)\right\rangle .
\end{array}
\]
By Lemma \ref{est-l} in Appendix, we obtain
\[%
\begin{array}
[c]{l}%
\mathbb{E}\left[  \sup\limits_{t\in\lbrack0,T]}\left(  |\xi^{2,\epsilon
}(t)|^{2}+|\eta^{2,\epsilon}(t)|^{2}\right)  +\int_{0}^{T}|\zeta^{2,\epsilon
}(t)|^{2}dt\right] \\
\leq C\mathbb{E}\left[  \left(  \int_{0}^{T}\left(  |A_{1}^{\epsilon
}(t)|+|C_{1}^{\epsilon}(t)|\right)  dt\right)  ^{2}+\int_{0}^{T}%
|B_{1}^{\epsilon}(t)|^{2}dt+|D_{1}^{\epsilon}(T)|^{2}\right] \\
\leq C\mathbb{E}\left[  \left(  \int_{0}^{T}|A_{1}^{\epsilon}(t)|dt\right)
^{2}+\left(  \int_{0}^{T}|C_{1}^{\epsilon}(t)|dt\right)  ^{2}+\int_{0}%
^{T}|B_{1}^{\epsilon}(t)|^{2}dt+|D_{1}^{\epsilon}(T)|^{2}\right]  .
\end{array}
\]
The following proof of the estimates are the same as in \cite{Hu-JX}. }
\end{proof}

\subsection{Second-order expansion}

Noting that $Z_{1}(t)=K_{1}(t)X_{1}(t)+\Delta(t)I_{E_{\epsilon}}(t)$\ in Lemma
\ref{lemma-y1}, then we introduce the second-order variational equation as
follows:
\begin{equation}
\left\{
\begin{array}
[c]{rl}%
dX_{2}(t)= & \left\{  b_{x}(t)X_{2}(t)+b_{y}(t)Y_{2}(t)+b_{z}(t)Z_{2}%
(t)+\delta b(t,\Delta)I_{E_{\epsilon}}(t)\right. \\
& \left.  +\frac{1}{2}D^{2}b(t)\left(  X_{1}(t)^{\intercal},Y_{1}%
(t),\left\langle K_{1}(t),X_{1}(t)\right\rangle \right)  ^{2}\right\}  dt\\
& +\left\{  \sigma_{x}(t)X_{2}(t)+\sigma_{y}(t)Y_{2}(t)+\frac{1}{2}D^{2}%
\sigma(t)\left(  X_{1}(t)^{\intercal},Y_{1}(t),\left\langle K_{1}%
(t),X_{1}(t)\right\rangle \right)  ^{2}\right. \\
& \left.  +\sigma_{z}(t)Z_{2}(t)+\left[  \delta\sigma_{x}(t,\Delta
)X_{1}(t)+\delta\sigma_{y}(t,\Delta)Y_{1}(t)\right]  I_{E_{\epsilon}%
}(t)+\delta\sigma_{z}(t,\Delta)\left\langle K_{1}(t),X_{1}(t)\right\rangle
I_{E_{\epsilon}}(t)\right\}  dB(t),\\
X_{2}(0)= & 0,
\end{array}
\right.  \label{new-form-x2}%
\end{equation}
and
\begin{equation}
\left\{
\begin{array}
[c]{ll}%
dY_{2}(t)= & -\left\{  \left\langle g_{x}(t),X_{2}(t)\right\rangle
+g_{y}(t)Y_{2}(t)+g_{z}(t)Z_{2}(t)+\left[  \left\langle q(t),\delta
\sigma(t,\Delta)\right\rangle +\delta g(t,\Delta)\right]  I_{E_{\epsilon}%
}(t)\right. \\
& \left.  +\frac{1}{2}\left[  X_{1}(t)^{\intercal},Y_{1}(t),\left\langle
K_{1}(t),X_{1}(t)\right\rangle \right]  D^{2}g(t)\left[  X_{1}(t)^{\intercal
},Y_{1}(t),\left\langle K_{1}(t),X_{1}(t)\right\rangle \right]  ^{\intercal
}\right\}  dt+Z_{2}(t)dB(t),\\
Y_{2}(T)= & \left\langle \phi_{x}(\bar{X}(T)),X_{2}(T)\right\rangle +\frac
{1}{2}\left\langle \phi_{xx}(\bar{X}(T))X_{1}(T),X_{1}(T)\right\rangle ,
\end{array}
\right.  \label{new-form-y2}%
\end{equation}
where
\[%
\begin{array}
[c]{l}%
D^{2}b(t)\left(  X_{1}(t)^{\intercal},Y_{1}(t),\left\langle K_{1}%
(t),X_{1}(t)\right\rangle \right)  ^{2}\\
=(\mathrm{tr}[D^{2}b^{1}(t)\left(  X_{1}(t)^{\intercal},Y_{1}(t),\left\langle
K_{1}(t),X_{1}(t)\right\rangle \right)  \left(  X_{1}(t)^{\intercal}%
,Y_{1}(t),\left\langle K_{1}(t),X_{1}(t)\right\rangle \right)  ^{\intercal
}],...,\\
\text{ \ \ \ \ }\mathrm{tr}[D^{2}b^{n}(t)\left(  X_{1}(t)^{\intercal}%
,Y_{1}(t),\left\langle K_{1}(t),X_{1}(t)\right\rangle \right)  \left(
X_{1}(t)^{\intercal},Y_{1}(t),\left\langle K_{1}(t),X_{1}(t)\right\rangle
\right)  ^{\intercal}]^{\intercal})
\end{array}
\]
and $D^{2}\sigma(t)\left(  X_{1}(t)^{\intercal},Y_{1}(t),\left\langle
K_{1}(t),X_{1}(t)\right\rangle \right)  ^{2}$ is defined similarly. In the
following lemma, we estimate the orders of $X_{2}(\cdot)$, $Y_{2}\left(
\cdot\right)  $, $Z_{2}\left(  \cdot\right)  $, and $Y^{\epsilon}(0)-\bar
{Y}(0)-Y_{1}(0)-Y_{2}(0)$. Let
\[%
\begin{array}
[c]{rl}%
\xi^{3,\epsilon}(t) & :=X^{\epsilon}(t)-\bar{X}(t)-X_{1}(t)-X_{2}(t);\text{
}\eta^{3,\epsilon}(t):=Y^{\epsilon}(t)-\bar{Y}(t)-Y_{1}(t)-Y_{2}(t);\\
\zeta^{3,\epsilon}(t) & :=Z^{\epsilon}(t)-\bar{Z}(t)-Z_{1}(t)-Z_{2}(t);\text{
}\Theta(t):=(\bar{X}(t),\bar{Y}(t),\bar{Z}(t)).
\end{array}
\]

\begin{lemma}
\label{est-second-order} Suppose that Assumption \ref{assmlip}, \ref{assm-ex},
\ref{assm-ex-ln}, \ref{assm-p-q} and \ref{assm-delta} hold. Then for any
$2\leq\beta\leq4$ we have%
\[%
\begin{array}
[c]{rl}%
\mathbb{E}\left[  \sup\limits_{t\in\lbrack0,T]}(|X_{2}(t)|^{2}+|Y_{2}%
(t)|^{2})\right]  +\mathbb{E}\left[  \int_{0}^{T}|Z_{2}(t)|^{2}dt\right]  &
=O(\epsilon^{2}),\\
\mathbb{E}\left[  \sup\limits_{t\in\lbrack0,T]}(|X_{2}(t)|^{\beta}%
+|Y_{2}(t)|^{\beta})\right]  +\mathbb{E}\left[  \left(  \int_{0}^{T}%
|Z_{2}(t)|^{2}dt\right)  ^{\frac{\beta}{2}}\right]  & =o(\epsilon^{\frac
{\beta}{2}}),\\
Y^{\epsilon}(0)-\bar{Y}(0)-Y_{1}(0)-Y_{2}(0) & =o(\epsilon).
\end{array}
\]

\end{lemma}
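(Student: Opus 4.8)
The plan is to treat the second-order variational system (\ref{new-form-x2})--(\ref{new-form-y2}) as a linear FBSDE with the bounded coefficients $b_{x}(t),\dots,\phi_{x}(\bar X(T))$ (bounded by Assumption \ref{assmlip}(i)) and with known inhomogeneous terms, and to apply the a priori $L^{\beta}$-estimate for linear FBSDEs (Lemma \ref{est-l} in the Appendix, whose hypotheses are guaranteed by Assumption \ref{assm-ex-ln}). Under that estimate the left-hand sides of the first two displays are controlled by the $L^{1,\beta}$-norm of the drift forcing, the $L^{2,\beta}$-norm of the diffusion forcing, and the $L^{\beta}$-norm of the extra terminal term $\frac12\langle\phi_{xx}(\bar X(T))X_{1}(T),X_{1}(T)\rangle$. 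Thus both of the first two displays reduce to estimating these forcing terms, which split into two groups: the spike terms carrying the factor $I_{E_{\epsilon}}$ (e.g. $\delta b(t,\Delta)I_{E_{\epsilon}}$, $[\delta\sigma_{x}(t,\Delta)X_{1}+\delta\sigma_{y}(t,\Delta)Y_{1}]I_{E_{\epsilon}}$, $[\langle q,\delta\sigma(t,\Delta)\rangle+\delta g(t,\Delta)]I_{E_{\epsilon}}$) and the quadratic terms built from the Hessians $D^{2}b$, $D^{2}\sigma$, $D^{2}g$ acting on $(X_{1}^{\intercal},Y_{1},\langle K_{1},X_{1}\rangle)$.

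For the spike terms I would use $|E_{\epsilon}|=\epsilon$ together with the Hölder bound $\mathbb{E}[(\int_{E_{\epsilon}}|\cdot|\,dt)^{\beta}]\le\epsilon^{\beta-1}\mathbb{E}[\int_{E_{\epsilon}}|\cdot|^{\beta}dt]$, the linear growth of the data (Assumption \ref{assmlip}(i)), the growth bound (\ref{del-new-3}) on $\Delta$ from Assumption \ref{assm-delta}, and the boundedness of $p,q$ and hence of $K_{1}$ granted by Assumption \ref{assm-p-q}; every such term is then $O(\epsilon^{\beta})$. For the quadratic terms, since the Hessians and $K_{1}$ are bounded, each is dominated by $|X_{1}|^{2}+|Y_{1}|^{2}$; bounding $\int_{0}^{T}(|X_{1}|^{2}+|Y_{1}|^{2})dt\le T\sup_{t}(|X_{1}|^{2}+|Y_{1}|^{2})$ and invoking Lemma \ref{est-one-order} with the doubled exponent $2\beta$ gives $\mathbb{E}[(\int_{0}^{T}(|X_{1}|^{2}+|Y_{1}|^{2})dt)^{\beta}]\le C\,\mathbb{E}[\sup_{t}(|X_{1}|^{2\beta}+|Y_{1}|^{2\beta})]=O(\epsilon^{\beta})$. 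This is exactly why the range is restricted to $2\le\beta\le4$: it keeps $2\beta\in[4,8]$ within the validity range $[2,8]$ of Lemma \ref{est-one-order}. Collecting these, the forcing is $O(\epsilon^{\beta})$; taking $\beta=2$ yields the sharp $O(\epsilon^{2})$ of the first display, while for $2\le\beta\le4$ one has $O(\epsilon^{\beta})=o(\epsilon^{\beta/2})$, which is the second display.

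For the third identity I would introduce $(\xi^{3,\epsilon},\eta^{3,\epsilon},\zeta^{3,\epsilon})$ and write down the linear FBSDE it satisfies; its inhomogeneous terms are precisely the second-order Taylor remainders of $b,\sigma,g,\phi$ along the segment joining $\Theta(t)$ to $\Theta^{\epsilon}(t)$, once the contributions already carried by the first- and second-order equations have been removed. Using the orders $X^{\epsilon}-\bar X,X_{1}=O(\epsilon^{1/2})$ and $X_{2}=O(\epsilon)$ together with the continuity of the second derivatives, I would show each remainder is $o(\epsilon)$ in the relevant $L^{1,2}$/$L^{2,2}$ norm, apply Lemma \ref{est-l} once more to obtain $\mathbb{E}[\sup_{t}|\eta^{3,\epsilon}(t)|^{2}]^{1/2}=o(\epsilon)$, and finally read off $Y^{\epsilon}(0)-\bar Y(0)-Y_{1}(0)-Y_{2}(0)=\eta^{3,\epsilon}(0)=o(\epsilon)$, the value at $t=0$ being deterministic. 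From the setup onward this computation coincides with the one in \cite{Hu-JX}; the point of the present note is only that every estimate invoked is now licensed by Assumptions \ref{assm-ex}, \ref{assm-ex-ln} and \ref{assm-p-q} rather than by the weakly coupled condition.

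The main obstacle is the third display. A naive size count makes each Taylor remainder merely $O(\epsilon)$, the same order as the genuine second-order term $Y_{2}(0)$; to extract the gain to $o(\epsilon)$ one must exploit the uniform continuity (and boundedness) of $b_{xx},\dots,\sigma_{zz},g_{zz}$ to replace the path-integrated Hessians by the frozen Hessians $D^{2}\psi(t)$ up to a factor that tends to $0$, and then push this through a dominated-convergence argument so that its product with the $O(\epsilon)$ quadratic magnitude becomes $o(\epsilon)$. The cross terms coupling $X_{2}$ with the first-order processes, and the fully coupled structure of the FBSDE (which forces one to control $\eta^{3,\epsilon}$ and $\zeta^{3,\epsilon}$ simultaneously through Lemma \ref{est-l}), are the places where care is needed.
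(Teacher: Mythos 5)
Your treatment of the first two displays matches the paper's: apply the a priori estimate of Lemma \ref{est-l} to the system (\ref{new-form-x2})--(\ref{new-form-y2}), split the forcing into spike terms and Hessian-quadratic terms, and control the latter via $\mathbb{E}[\sup_t|X_1(t)|^{2\beta}]=O(\epsilon^{\beta})$ from Lemma \ref{est-one-order}, which is indeed why $\beta$ is capped at $4$. That part is sound.

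The third display is where your route breaks down, and the failure is not the one you flag (continuity of the second derivatives), but an integrability obstruction that the paper itself points out immediately after this lemma. You propose to apply Lemma \ref{est-l} to the system for $(\xi^{3,\epsilon},\eta^{3,\epsilon},\zeta^{3,\epsilon})$ and conclude $\mathbb{E}[\sup_t|\eta^{3,\epsilon}(t)|^{2}]=o(\epsilon^{2})$. Lemma \ref{est-l} requires the diffusion forcing in $L^{2,2}$, i.e.\ $\mathbb{E}[\int_0^T|B_2^{\epsilon}(t)|^{2}dt]=o(\epsilon^{2})$; but $B_2^{\epsilon}$ contains the term $\tfrac12\widetilde{D^{2}\sigma^{\epsilon}}(t)\bigl(\xi^{1,\epsilon}(t)^{\intercal},\eta^{1,\epsilon}(t),\zeta^{1,\epsilon}(t)-\Delta(t)I_{E_{\epsilon}}(t)\bigr)^{2}$, whose square involves $\int_0^T|\zeta^{1,\epsilon}(t)|^{4}dt$. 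The available estimates control only $\mathbb{E}\bigl[\bigl(\int_0^T|\zeta^{1,\epsilon}(t)|^{2}dt\bigr)^{2}\bigr]=O(\epsilon^{2})$, not $\mathbb{E}[\int_0^T|\zeta^{1,\epsilon}(t)|^{4}dt]$, since $Z$-increments have no pathwise sup bound. This is exactly why the paper proves only the statement at $t=0$ here and defers the sup-estimate to Lemma \ref{lemma-est-sup}, which needs the extra hypothesis $\sigma(t,x,y,z,u)=A(t)z+\sigma_1(t,x,y,u)$ (so $\sigma_{zz}\equiv 0$).

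The paper's actual argument for the third identity avoids the $L^{2}$ norm of $B_2^{\epsilon}$ entirely: it invokes the decoupling representation of Lemma \ref{appen-th-linear-fbsde}, $\eta^{3,\epsilon}(t)=\langle p(t),\xi^{3,\epsilon}(t)\rangle+\varphi(t)$, so that $\eta^{3,\epsilon}(0)=\varphi(0)=\mathbb{E}[\varphi(0)]$ (since $\xi^{3,\epsilon}(0)=0$), and $\varphi$ solves a linear scalar BSDE in which $A_2^{\epsilon},B_2^{\epsilon},C_2^{\epsilon},D_2^{\epsilon}$ enter \emph{linearly} with bounded weights $p,q$. Hence only $\mathbb{E}[\int_0^T(|A_2^{\epsilon}|+|B_2^{\epsilon}|+|C_2^{\epsilon}|)dt]+\mathbb{E}[|D_2^{\epsilon}(T)|]=o(\epsilon)$ is needed, and the quadratic-in-$\zeta^{1,\epsilon}$ terms are then handled by Cauchy--Schwarz in $dt\times dP$. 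You should replace your appeal to Lemma \ref{est-l} by this duality step; as written, the step "apply Lemma \ref{est-l} once more" asserts an estimate the hypotheses of the lemma do not deliver.
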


\begin{proof}
Let
\[%
\begin{array}
[c]{rl}%
L_{1}(t)= & \delta b(t,\Delta)I_{E_{\epsilon}}(t)+\frac{1}{2}D^{2}b(t)\left(
X_{1}(t)^{\intercal},Y_{1}(t),\left\langle K_{1}(t),X_{1}(t)\right\rangle
\right)  ^{2},\\
L_{2}(t)= & \frac{1}{2}D^{2}\sigma(t)\left(  X_{1}(t)^{\intercal}%
,Y_{1}(t),\left\langle K_{1}(t),X_{1}(t)\right\rangle \right)  ^{2}+\left[
\delta\sigma_{x}(t,\Delta)X_{1}(t)+\delta\sigma_{y}(t,\Delta)Y_{1}(t)\right]
I_{E_{\epsilon}}(t)\\
& +\delta\sigma_{z}(t,\Delta)\left\langle K_{1}(t),X_{1}(t)\right\rangle
I_{E_{\epsilon}}(t),\\
L_{3}\left(  t\right)  = & \left[  \left\langle q(t),\delta\sigma
(t,\Delta)\right\rangle +\delta g(t,\Delta)\right]  I_{E_{\epsilon}}%
(t)+\frac{1}{2}\left[  X_{1}(t)^{\intercal},Y_{1}(t),\left\langle
K_{1}(t),X_{1}(t)\right\rangle \right]  D^{2}g(t)\left[  X_{1}(t)^{\intercal
},Y_{1}(t),\left\langle K_{1}(t),X_{1}(t)\right\rangle \right]  ^{\intercal
},\\
\varsigma= & \frac{1}{2}\left\langle \phi_{xx}(\bar{X}(T))X_{1}(T),X_{1}%
(T)\right\rangle .
\end{array}
\]
By Lemma \ref{est-l} in Appendix, we have
\[%
\begin{array}
[c]{l}%
\mathbb{E}\left[  \sup\limits_{t\in\lbrack0,T]}(|X_{2}(t)|^{2}+|Y_{2}%
(t)|^{2})\right]  +\mathbb{E}\left[  \int_{0}^{T}|Z_{2}(t)|^{2}dt\right] \\
\leq C\mathbb{E}\left[  \left\vert \varsigma\right\vert ^{2}+\left(  \int%
_{0}^{T}\left(  \left\vert L_{1}\left(  t\right)  \right\vert +\left\vert
L_{3}\left(  t\right)  \right\vert \right)  dt\right)  ^{2}+\int_{0}%
^{T}\left\vert L_{2}\left(  t\right)  \right\vert ^{2}dt\right] \\
\leq C\mathbb{E}\left[  \sup\limits_{t\in\lbrack0,T]}\left\vert X_{1}%
(t)\right\vert ^{4}+\left(  \int_{E_{\epsilon}}\left\vert \delta
b(t,\Delta)+\delta\sigma(t,\Delta)+\delta g(t,\Delta)\right\vert dt\right)
^{2}\right] \\
\text{ \ }+C\mathbb{E}\left[  \sup\limits_{t\in\lbrack0,T]}\left\vert
X_{1}(t)\right\vert ^{2}\int_{E_{\epsilon}}\left\vert \delta\sigma
_{x}(t,\Delta)+\delta\sigma_{y}(t,\Delta)+\delta\sigma_{z}(t,\Delta
)\right\vert ^{2}dt\right] \\
\leq C\mathbb{E}\left[  \sup\limits_{t\in\lbrack0,T]}\left\vert X_{1}%
(t)\right\vert ^{4}\right]  +C\epsilon\mathbb{E}\left[  \int_{E_{\epsilon}%
}(1+|\bar{X}(t)|^{2}+|\bar{Y}(t)|^{2}+|\bar{Z}(t)|^{2}+|u(t)|^{2}+|\bar
{u}(t)|^{2})dt\right] \\
\text{ \ }+C\mathbb{E}\left[  \sup\limits_{t\in\lbrack0,T]}\left\vert
X_{1}(t)\right\vert ^{2}\int_{E_{\epsilon}}\left\vert \delta\sigma
_{x}(t,\Delta)+\delta\sigma_{y}(t,\Delta)+\delta\sigma_{z}(t,\Delta
)\right\vert ^{2}dt\right] \\
\leq C\epsilon^{2}%
\end{array}
\]
and%
\[%
\begin{array}
[c]{l}%
\mathbb{E}\left[  \sup\limits_{t\in\lbrack0,T]}(|X_{2}(t)|^{\beta}%
+|Y_{2}(t)|^{\beta})\right]  +\mathbb{E}\left[  \left(  \int_{0}^{T}%
|Z_{2}(t)|^{2}dt\right)  ^{\frac{\beta}{2}}\right] \\
\leq C\mathbb{E}\left[  \left\vert \varsigma\right\vert ^{\beta}+\left(
\int_{0}^{T}\left(  \left\vert L_{1}\left(  t\right)  \right\vert +\left\vert
L_{3}\left(  t\right)  \right\vert \right)  dt\right)  ^{\beta}+\left(
\int_{0}^{T}\left\vert L_{2}\left(  t\right)  \right\vert ^{2}dt\right)
^{\frac{\beta}{2}}\right] \\
\leq C\mathbb{E}\left[  \sup\limits_{t\in\lbrack0,T]}\left\vert X_{1}%
(t)\right\vert ^{2\beta}+\left(  \int_{E_{\epsilon}}\left\vert \delta
b(t,\Delta)+\delta\sigma(t,\Delta)+\delta g(t,\Delta)\right\vert dt\right)
^{\beta}\right] \\
\text{ \ }+C\mathbb{E}\left[  \sup\limits_{t\in\lbrack0,T]}\left\vert
X_{1}(t)\right\vert ^{\beta}\left(  \int_{E_{\epsilon}}\left\vert \delta
\sigma_{x}(t,\Delta)+\delta\sigma_{y}(t,\Delta)+\delta\sigma_{z}%
(t,\Delta)\right\vert ^{2}dt\right)  ^{\beta/2}\right] \\
\leq C\mathbb{E}\left[  \sup\limits_{t\in\lbrack0,T]}\left\vert X_{1}%
(t)\right\vert ^{2\beta}\right]  +C\epsilon^{\frac{\beta}{2}}\mathbb{E}\left[
\left(  \int_{E_{\epsilon}}(1+|\bar{X}(t)|^{2}+|\bar{Y}(t)|^{2}+|\bar
{Z}(t)|^{2}+|u(t)|^{2}+|\bar{u}(t)|^{2})dt\right)  ^{\frac{\beta}{2}}\right]
\\
\text{ \ }+C\mathbb{E}\left[  \sup\limits_{t\in\lbrack0,T]}\left\vert
X_{1}(t)\right\vert ^{\beta}\left(  \int_{E_{\epsilon}}\left\vert \delta
\sigma_{x}(t,\Delta)+\delta\sigma_{y}(t,\Delta)+\delta\sigma_{z}%
(t,\Delta)\right\vert ^{2}dt\right)  ^{\beta/2}\right] \\
=o\left(  \epsilon^{^{\beta/2}}\right)  .
\end{array}
\]

Now, we focus on the last estimate. We use the same notations $\xi
^{1,\epsilon}(t)$, $\eta^{1,\epsilon}(t)$, $\zeta^{1,\epsilon}(t)$,
$\xi^{2,\epsilon}(t)$, $\eta^{2,\epsilon}(t)$ and $\zeta^{2,\epsilon}(t)$ in
the proof of Lemma \ref{est-epsilon-bar} and Lemma \ref{est-one-order}. Let
\[%
\begin{array}
[c]{rl}%
\Theta(t,\Delta I_{E_{\epsilon}}) & =(\bar{X}(t),\bar{Y}(t),\bar{Z}%
(t)+\Delta(t)I_{E_{\epsilon}}(t));\text{ }\Theta^{\epsilon}(t):=(X^{\epsilon
}(t),Y^{\epsilon}(t),Z^{\epsilon}(t)).
\end{array}
\]
Define%
\[
\widetilde{D^{2}b^{\epsilon}}(t)=2\int_{0}^{1}\int_{0}^{1}\theta
D^{2}b(t,\Theta(t,\Delta I_{E_{\epsilon}})+\lambda\theta(\Theta^{\epsilon
}(t)-\Theta(t,\Delta I_{E_{\epsilon}})),u^{\epsilon}(t))d\theta d\lambda,
\]
and $\widetilde{D^{2}\sigma^{\epsilon}}(t)$, $\widetilde{D^{2}g^{\epsilon}%
}(t)$, $\tilde{\phi}_{xx}^{\epsilon}(T)$ are defined similarly. Then, we have
\begin{equation}
\left\{
\begin{array}
[c]{ll}%
d\xi^{3,\epsilon}(t)= & \left\{  b_{x}(t)\xi^{3,\epsilon}(t)+b_{y}%
(t)\eta^{3,\epsilon}(t)+b_{z}(t)\zeta^{3,\epsilon}(t)+A_{2}^{\epsilon
}(t)\right\}  dt\\
& +\left\{  \sigma_{x}(t)\xi^{3,\epsilon}(t)+\sigma_{y}(t)\eta^{3,\epsilon
}(t)+\sigma_{z}(t)\zeta^{3,\epsilon}(t)+B_{2}^{\epsilon}(t)\right\}  dB(t),\\
\xi^{3,\epsilon}(0)= & 0,
\end{array}
\right.  \label{x-x1-x2}%
\end{equation}
and%
\begin{equation}
\left\{
\begin{array}
[c]{lll}%
d\eta^{3,\epsilon}(t) & = & -\{\left\langle g_{x}(t),\xi^{3,\epsilon
}(t)\right\rangle +g_{y}(t)\eta^{3,\epsilon}(t)+g_{z}(t)\zeta^{3,\epsilon
}(t)+C_{2}^{\epsilon}(t)\}dt-\zeta^{3,\epsilon}(t)dB(t),\\
\eta^{3,\epsilon}(T) & = & \left\langle \phi_{x}(\bar{X}(T)),\xi^{3,\epsilon
}(T)\right\rangle +D_{2}^{\epsilon}(T),
\end{array}
\right.  \label{y-y1-y2}%
\end{equation}
where%
\[%
\begin{array}
[c]{ll}%
A_{2}^{\epsilon}(t)= & \left[  \delta b_{x}(t,\Delta)\xi^{1,\epsilon
}(t)+\delta b_{y}(t,\Delta)\eta^{1,\epsilon}(t)+\delta b_{z}(t,\Delta)\left(
\zeta^{1,\epsilon}(t)-\Delta(t)I_{E_{\epsilon}}(t)\right)  \right]
I_{E_{\epsilon}}(t)\\
& +\frac{1}{2}\widetilde{D^{2}b^{\epsilon}}(t)\left(  X_{1}(t)^{\intercal
},Y_{1}(t),\left\langle K_{1}(t),X_{1}(t)\right\rangle \right)  ^{2}-\frac
{1}{2}D^{2}b(t)\left(  X_{1}(t)^{\intercal},Y_{1}(t),\left\langle
K_{1}(t),X_{1}(t)\right\rangle \right)  ^{2},
\end{array}
\]%
\[%
\begin{array}
[c]{ll}%
B_{2}^{\epsilon}(t)= & \left[  \delta\sigma_{x}(t,\Delta)\xi^{2,\epsilon
}(t)+\delta\sigma_{y}(t,\Delta)\eta^{2,\epsilon}(t)+\delta\sigma_{z}%
(t,\Delta)\zeta^{2,\epsilon}(t)\right]  I_{E_{\epsilon}}(t)\\
& +\frac{1}{2}\widetilde{D^{2}\sigma^{\epsilon}}(t)\left(  X_{1}%
(t)^{\intercal},Y_{1}(t),\left\langle K_{1}(t),X_{1}(t)\right\rangle \right)
^{2}-\frac{1}{2}D^{2}\sigma(t)\left(  X_{1}(t)^{\intercal},Y_{1}%
(t),\left\langle K_{1}(t),X_{1}(t)\right\rangle \right)  ^{2},
\end{array}
\]%
\[%
\begin{array}
[c]{ll}%
C_{2}^{\epsilon}(t)= & \left[  \left\langle \delta g_{x}(t,\Delta
),\xi^{1,\epsilon}(t)\right\rangle +\delta g_{y}(t,\Delta)\eta^{1,\epsilon
}(t)+\delta g_{z}(t,\Delta)\left(  \zeta^{1,\epsilon}(t)-\Delta
(t)I_{E_{\epsilon}}(t)\right)  \right]  I_{E_{\epsilon}}(t)\\
& +\frac{1}{2}\left[  \xi^{1,\epsilon}(t)^{\intercal},\eta^{1,\epsilon
}(t),\zeta^{1,\epsilon}(t)-\Delta(t)I_{E_{\epsilon}}(t)\right]
\widetilde{D^{2}g^{\epsilon}}(t)\left[  \xi^{1,\epsilon}(t)^{\intercal}%
,\eta^{1,\epsilon}(t),\zeta^{1,\epsilon}(t)-\Delta(t)I_{E_{\epsilon}%
}(t)\right]  ^{\intercal}\\
& -\frac{1}{2}\left[  X_{1}(t)^{\intercal},Y_{1}(t),K_{1}(t)X_{1}(t)\right]
D^{2}g(t)\left[  X_{1}(t)^{\intercal},Y_{1}(t),K_{1}(t)X_{1}(t)\right]
^{\intercal},\\
D_{2}^{\epsilon}(T)= & \frac{1}{2}\left\langle \tilde{\phi}_{xx}^{\epsilon
}(T)\xi^{1,\epsilon}(T),\xi^{1,\epsilon}(T)\right\rangle -\frac{1}%
{2}\left\langle \phi_{xx}(\bar{X}(T))X_{1}(T),X_{1}(T)\right\rangle ,
\end{array}
\]
and $\widetilde{D^{2}b^{\epsilon}}(t)\left(  X_{1}(t)^{\intercal}%
,Y_{1}(t),\left\langle K_{1}(t),X_{1}(t)\right\rangle \right)  ^{2}$ is
defined similar to $D^{2}b(t)\left(  X_{1}(t)^{\intercal},Y_{1}%
(t),\left\langle K_{1}(t),X_{1}(t)\right\rangle \right)  ^{2}$.

By Lemma \ref{appen-th-linear-fbsde} in Appendix,
\[%
\begin{array}
[c]{rl}%
\eta^{3,\epsilon}(t)= & \left\langle p\left(  t\right)  ,\xi^{3,\epsilon
}(t)\right\rangle +\varphi\left(  t\right)  ,\\
\zeta^{3,\epsilon}(t)= & \left\langle K_{1}\left(  t\right)  ,\xi^{3,\epsilon
}(t)\right\rangle +\left(  1-\left\langle p(t),\sigma_{z}(t)\right\rangle
\right)  ^{-1}\left[  \left\langle p(t),\sigma_{y}(t)\right\rangle
\varphi(t)+\left\langle p(t),B_{2}^{\epsilon}(t)\right\rangle +\nu(t)\right]
.
\end{array}
\]
Then we have
\begin{equation}%
\begin{array}
[c]{ll}%
|\eta^{3,\epsilon}(0)| & =\left\vert \mathbb{E}\left[  \varphi\left(
0\right)  \right]  \right\vert \\
& \leq C\mathbb{E}\left[  \left\vert D_{2}^{\epsilon}(T)\right\vert +\int%
_{0}^{T}\left(  \left\vert A_{2}^{\epsilon}(t)\right\vert +\left\vert
B_{2}^{\epsilon}(t)\right\vert +\left\vert C_{2}^{\epsilon}(t)\right\vert
\right)  dt\right]  .
\end{array}
\label{second order-estimate}%
\end{equation}
We estimate each term as follows.

(1) {
\[%
\begin{array}
[c]{ll}%
\mathbb{E}\left[  |D_{2}^{\epsilon}(T)|\right]  & \leq C\left\{
\mathbb{E}\left[  |\tilde{\phi}_{xx}^{\epsilon}(T)-\phi_{xx}(\bar{X}%
(T))||\xi^{1,\epsilon}(T)|^{2}+|\xi^{2,\epsilon}(T)||\xi^{1,\epsilon}%
(T)+X_{1}(T)|\right]  \right\} \\
& =o(\epsilon).
\end{array}
\]
}

(2) {We estimate
\begin{equation}
\mathbb{E}\left[  \int_{0}^{T}|A_{2}^{\epsilon}(t)|dt\right]  =o(\epsilon).
\label{second order-part estimate}%
\end{equation}
Indeed, (\ref{second order-part estimate}) is due to the following estimates:
\[%
\begin{array}
[c]{l}%
\mathbb{E}\left[  \int_{0}^{T}|\delta b_{z}(t,\Delta)(\zeta^{1,\epsilon
}(t)-\Delta(t)I_{E_{\epsilon}}(t))|I_{E_{\epsilon}}(t)dt\right] \\
\leq\mathbb{E}\left[  \int_{E_{\epsilon}}|\delta b_{z}(t,\Delta)|\left(
|\zeta^{2,\epsilon}(t)|+|\left\langle K_{1}(t),X_{1}(t)\right\rangle |\right)
dt\right] \\
\leq C\mathbb{E}\left[  \int_{E_{\epsilon}}|\zeta^{2,\epsilon}(t)|dt\right]
+C\mathbb{E}\left[  \sup\limits_{t\in\lbrack0,T]}|X_{1}(t)|\int_{E_{\epsilon}%
}|\delta b_{z}(t,\Delta)|dt\right] \\
\leq C\epsilon^{\frac{1}{2}}\left\{  \mathbb{E}\left[  \int_{0}^{T}%
|\zeta^{2,\epsilon}(t)|^{2}dt\right]  \right\}  ^{\frac{1}{2}}+C\epsilon
\mathbb{E}[\sup\limits_{t\in\lbrack0,T]}|X_{1}(t)|]\\
=o(\epsilon),
\end{array}
\]%
\begin{equation}%
\begin{array}
[c]{l}%
\mathbb{E}\left[  \int_{0}^{T}\left\vert \widetilde{b}_{zz}^{i,\epsilon
}(t)\left(  \zeta^{1,\epsilon}(t)-\Delta(t)I_{E_{\epsilon}}(t)\right)
^{2}-b_{zz}^{i}(t)\left\langle K_{1}(t),X_{1}(t)\right\rangle ^{2}\right\vert
dt\right] \\
\leq\mathbb{E}\left[  \int_{0}^{T}\left\vert \widetilde{b}_{zz}^{i,\epsilon
}(t)\zeta^{2,\epsilon}(t)\left(  \zeta^{1,\epsilon}(t)-\Delta(t)I_{E_{\epsilon
}}(t)+K_{1}(t)X_{1}(t)\right)  \right\vert dt\right] \\
\text{ \ }+\mathbb{E}\left[  \int_{0}^{T}\left\vert \left(  \widetilde{b}%
_{zz}^{i,\epsilon}(t)-b_{zz}^{i}(t)\right)  \left\langle K_{1}(t),X_{1}%
(t)\right\rangle ^{2}\right\vert dt\right] \\
\leq C\left\{  \mathbb{E}\left[  \int_{0}^{T}\left\vert \zeta^{2,\epsilon
}(t)\right\vert ^{2}dt\right]  \right\}  ^{\frac{1}{2}}\left\{  \mathbb{E}%
\left[  \int_{0}^{T}\left\vert \zeta^{1,\epsilon}(t)-\Delta(t)I_{E_{\epsilon}%
}(t)+\left\langle K_{1}(t),X_{1}(t)\right\rangle \right\vert ^{2}dt\right]
\right\}  ^{\frac{1}{2}}\\
\text{ \ }+C\mathbb{E}\left[  \sup\limits_{t\in\lbrack0,T]}|X_{1}(t)|^{2}%
\int_{0}^{T}\left\vert \left(  \widetilde{b}_{zz}^{i,\epsilon}(t)-b_{zz}%
^{i}(t)\right)  \right\vert dt\right] \\
=o(\epsilon).
\end{array}
\label{est-d2b}%
\end{equation}
The other terms are similar.}

(3){ }The estimate of $\mathbb{E}\left[  \int_{0}^{T}|B_{2}^{\epsilon
}(t)|dt\right]  $:
\[%
\begin{array}
[c]{l}%
\mathbb{E}\left[  \int_{0}^{T}\left\vert \delta\sigma_{z}(t,\Delta
)\zeta^{2,\epsilon}(t)I_{E_{\epsilon}}(t)\right\vert dt\right] \\
\leq C\mathbb{E}\left[  \int_{E_{\epsilon}}|\zeta^{2,\epsilon}(t)|dt\right] \\
\leq C\epsilon^{\frac{1}{2}}\left\{  \mathbb{E}\left[  \int_{0}^{T}%
|\zeta^{2,\epsilon}(t)|^{2}dt\right]  \right\}  ^{\frac{1}{2}}\\
=o(\epsilon),
\end{array}
\]%
\[%
\begin{array}
[c]{l}%
\mathbb{E}\left[  \int_{0}^{T}\left\vert \tilde{\sigma}i_{zz}^{i,\epsilon
}(t)\left(  \zeta^{1,\epsilon}(t)-\Delta(t)I_{E_{\epsilon}}(t)\right)
^{2}-\sigma_{zz}^{i}(t)\left\langle K_{1}(t),X_{1}(t)\right\rangle
^{2}\right\vert dt\right] \\
\leq\mathbb{E}\left[  \int_{0}^{T}\left\vert \tilde{\sigma}_{zz}^{i,\epsilon
}(t)\left(  \zeta^{1,\epsilon}(t)-\Delta(t)I_{E_{\epsilon}}(t)+\left\langle
K_{1}(t),X_{1}(t)\right\rangle \right)  \zeta^{2,\epsilon}(t)\right\vert
dt\right] \\
\text{ \ }+\mathbb{E}\left[  \int_{0}^{T}\left\vert \tilde{\sigma}%
_{zz}^{i,\epsilon}(t)-\sigma_{zz}^{i}(t)\right\vert \left\langle
K_{1}(t),X_{1}(t)\right\rangle ^{2}dt\right] \\
\leq\mathbb{E}\left[  \int_{0}^{T}\left\vert \tilde{\sigma}_{zz}^{i,\epsilon
}(t)\left(  \zeta^{1,\epsilon}(t)-\Delta(t)I_{E_{\epsilon}}(t)\right)
\zeta^{2,\epsilon}(t)\right\vert dt\right]  +\mathbb{E}\left[  \int_{0}%
^{T}\left\vert \tilde{\sigma}_{zz}^{i,\epsilon}(t)\left\langle K_{1}%
(t),X_{1}(t)\right\rangle \zeta^{2,\epsilon}(t)\right\vert dt\right]
+o(\epsilon)\\
\leq C\left\{  \mathbb{E}\left[  \int_{0}^{T}\left\vert \left(  \zeta
^{1,\epsilon}(t)-\Delta(t)I_{E_{\epsilon}}(t)\right)  \right\vert
^{2}dt\right]  \right\}  ^{\frac{1}{2}}\left\{  \mathbb{E}\left[  \int_{0}%
^{T}\left\vert \zeta^{2,\epsilon}(t)\right\vert ^{2}dt\right]  \right\}
^{\frac{1}{2}}\\
\text{ \ }+C\mathbb{E}\left[  \sup\limits_{t\in\lbrack0,T]}\left\vert
X_{1}(t)\right\vert \int_{0}^{T}\left\vert \zeta^{2,\epsilon}(t)\right\vert
dt\right]  +o(\epsilon)\\
=o(\epsilon).
\end{array}
\]
{The other terms are similar.}

(4) {The estimate of $\mathbb{E}\left[  \int_{0}^{T}|C_{2}^{\epsilon
}(t)|dt\right]  $ is the same as the one of $\mathbb{E}\left[  \int_{0}%
^{T}|A_{2}^{\epsilon}(t)|dt\right]  $. }

Finally, we obtain
\[
Y^{\epsilon}(0)-\bar{Y}(0)-Y_{1}(0)-Y_{2}(0)=o(\epsilon).
\]
The proof is complete.
\end{proof}

In the above lemma, we only prove $Y^{\epsilon}(0)-\bar{Y}(0)-Y_{1}%
(0)-Y_{2}(0)=o(\epsilon)$ and have not deduced
\[
\mathbb{E}[\sup\limits_{t\in\lbrack0,T]}|Y^{\epsilon}(t)-\bar{Y}%
(t)-Y_{1}(t)-Y_{2}(t)|^{2}]=o(\epsilon^{2}).
\]
The reason is
\[
\mathbb{E}\left[  \int_{0}^{T}\left\vert \tilde{\sigma}_{zz}^{\epsilon
}(t)\left(  \zeta^{1,\epsilon}(t)-\Delta(t)I_{E_{\epsilon}}(t)\right)
\right\vert ^{2}\left\vert \zeta^{2,\epsilon}(t)\right\vert ^{2}dt\right]
=o(\epsilon^{2})
\]
may be not hold. But if
\begin{equation}
\sigma(t,x,y,z,u)=A(t)z+\sigma_{1}(t,x,y,u) \label{xigma-zz}%
\end{equation}
where $A(t)$ is a bounded adapted process, then $\sigma_{zz}\equiv0.$ In this
case, we can prove the following estimates.

\begin{lemma}
\label{lemma-est-sup}Under the same Assumptions as in Lemma
\ref{est-second-order}, and $\sigma(t,x,y,z,u)=A(t)z+$ $\sigma_{1}(t,x,y,u)$
where $A(t)$ is a bounded adapted process. Then%
\[%
\begin{array}
[c]{rl}%
\mathbb{E}\left[  \sup\limits_{t\in\lbrack0,T]}|X^{\epsilon}(t)-\bar
{X}(t)-X_{1}(t)-X_{2}(t)|^{2}\right]  & =o(\epsilon^{2}),\\
\mathbb{E}\left[  \sup\limits_{t\in\lbrack0,T]}|Y^{\epsilon}(t)-\bar
{Y}(t)-Y_{1}(t)-Y_{2}(t)|^{2}+\int_{0}^{T}|Z^{\epsilon}(t)-\bar{Z}%
(t)-Z_{1}(t)-Z_{2}(t)|^{2}dt\right]  & =o(\epsilon^{2}).
\end{array}
\]

\end{lemma}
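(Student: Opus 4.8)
The plan is to write down the linear FBSDE satisfied by the third--order remainders and to reduce everything to an a priori $L^{2}$-estimate. Recall that $(\xi^{3,\epsilon},\eta^{3,\epsilon},\zeta^{3,\epsilon})$ solves the coupled system (\ref{x-x1-x2})--(\ref{y-y1-y2}), with inhomogeneous coefficients $A_{2}^{\epsilon}$, $B_{2}^{\epsilon}$, $C_{2}^{\epsilon}$ and terminal perturbation $D_{2}^{\epsilon}(T)$ as defined in the proof of Lemma \ref{est-second-order}. Applying Lemma \ref{est-l} of the Appendix to this linear FBSDE yields both displayed estimates at once, since the single bound $\mathbb{E}[\sup_{t}(|\xi^{3,\epsilon}(t)|^{2}+|\eta^{3,\epsilon}(t)|^{2})+\int_{0}^{T}|\zeta^{3,\epsilon}(t)|^{2}dt]=o(\epsilon^{2})$ contains both. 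Thus it suffices to prove
\[
\mathbb{E}\left[|D_{2}^{\epsilon}(T)|^{2}\right]=o(\epsilon^{2}),\quad \mathbb{E}\left[\left(\int_{0}^{T}\left(|A_{2}^{\epsilon}(t)|+|C_{2}^{\epsilon}(t)|\right)dt\right)^{2}\right]=o(\epsilon^{2}),\quad \mathbb{E}\left[\int_{0}^{T}|B_{2}^{\epsilon}(t)|^{2}dt\right]=o(\epsilon^{2}).
\]
These are the squared ($L^{2}$-in-$\omega$) strengthenings of the $o(\epsilon)$ estimates already established in Lemma \ref{est-second-order}.

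For the terminal and drift forcing $D_{2}^{\epsilon}$, $A_{2}^{\epsilon}$, $C_{2}^{\epsilon}$ I would reuse the decompositions from Lemma \ref{est-second-order}, splitting each contribution into an ``argument--difference'' piece, where $\xi^{1,\epsilon}-X_{1}=\xi^{2,\epsilon}$, $\eta^{1,\epsilon}-Y_{1}=\eta^{2,\epsilon}$ and $\zeta^{1,\epsilon}-\Delta I_{E_{\epsilon}}-\langle K_{1},X_{1}\rangle=\zeta^{2,\epsilon}$, and a ``coefficient--difference'' piece, where the averaged Hessians or derivatives $\widetilde{D^{2}b^{\epsilon}}$, $\widetilde{b}_{zz}^{\epsilon}$, $\tilde{\phi}_{xx}^{\epsilon}$, etc. are compared with their values $D^{2}b$, $b_{zz}$, $\phi_{xx}$ on the optimal path. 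The argument--difference pieces are controlled by Cauchy--Schwarz, pairing an $L^{2}$-small factor of order $o(\epsilon)$ supplied by $\mathbb{E}[\sup_{t}|\xi^{2,\epsilon}|^{4}]=o(\epsilon^{2})$ and $\mathbb{E}[(\int_{0}^{T}|\zeta^{2,\epsilon}|^{2}dt)^{2}]=o(\epsilon^{2})$ from Lemma \ref{est-one-order} against an $L^{2}$-bounded factor of order $\epsilon$ supplied by $\mathbb{E}[\sup_{t}|\xi^{1,\epsilon}|^{4}]=O(\epsilon^{2})$ and $\mathbb{E}[\sup_{t}|X_{1}|^{4}]=O(\epsilon^{2})$ from Lemmas \ref{est-epsilon-bar} and \ref{est-one-order}. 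The coefficient--difference pieces are controlled by H\"older, pairing a vanishing $o(1)$ factor such as $\mathbb{E}[|\widetilde{b}_{zz}^{\epsilon}-b_{zz}|^{r}]\to 0$ (dominated convergence, using boundedness and continuity from Assumption \ref{assmlip}) against an $O(\epsilon^{2})$ eighth--moment factor such as $(\mathbb{E}[\sup_{t}|X_{1}|^{8}])^{1/2}=O(\epsilon^{2})$. Throughout, boundedness of $K_{1}$, which follows from boundedness of $(p,q)$ and of $|1-\langle p,\sigma_{z}\rangle|^{-1}$ in Assumption \ref{assm-p-q}, lets me dominate $|\langle K_{1},X_{1}\rangle|$ by $C|X_{1}|$, and each $I_{E_{\epsilon}}$-localized first--order term gains a factor $|E_{\epsilon}|=\epsilon$, producing $O(\epsilon^{3})=o(\epsilon^{2})$.

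The one genuinely new point is the diffusion forcing $\mathbb{E}[\int_{0}^{T}|B_{2}^{\epsilon}(t)|^{2}dt]=o(\epsilon^{2})$, and this is exactly where the hypothesis $\sigma(t,x,y,z,u)=A(t)z+\sigma_{1}(t,x,y,u)$ is indispensable. In the general case $B_{2}^{\epsilon}$ carries the first--order term $\delta\sigma_{z}(t,\Delta)\zeta^{2,\epsilon}(t)I_{E_{\epsilon}}(t)$ and the second--order $zz$-contribution $\tfrac{1}{2}\widetilde{\sigma}_{zz}^{\epsilon}(t)(\zeta^{1,\epsilon}(t)-\Delta(t)I_{E_{\epsilon}}(t))^{2}$; squaring and integrating the latter yields precisely the term $\mathbb{E}[\int_{0}^{T}|\widetilde{\sigma}_{zz}^{\epsilon}(t)(\zeta^{1,\epsilon}(t)-\Delta(t)I_{E_{\epsilon}}(t))|^{2}|\zeta^{2,\epsilon}(t)|^{2}dt]$ flagged in the remark preceding the lemma, which is only $O(\epsilon^{2})$ and in general fails to be $o(\epsilon^{2})$. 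Under the affine hypothesis one has $\sigma_{z}=A(t)$, hence $\delta\sigma_{z}(t,\Delta)\equiv 0$ and $\sigma_{zz}\equiv\sigma_{xz}\equiv\sigma_{yz}\equiv 0$, so both offending terms vanish identically; what survives in $B_{2}^{\epsilon}$ are only the $I_{E_{\epsilon}}$-localized $x,y$-terms and the $xx,xy,yy$ second--order terms, all built from $\xi^{1,\epsilon},\eta^{1,\epsilon},\xi^{2,\epsilon},\eta^{2,\epsilon},X_{1},Y_{1}$ and containing no $\zeta$-factor. For these I would apply the same argument--difference/coefficient--difference splitting as above to reach $o(\epsilon^{2})$.

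The hard part is therefore concentrated in this $B_{2}^{\epsilon}$ estimate: it is the only forcing term for which the required gain from $O(\epsilon^{2})$ to $o(\epsilon^{2})$ cannot be obtained without additional structure, and the entire content of the lemma is that linearity of $\sigma$ in $z$ annihilates the obstruction. Once $\mathbb{E}[\int_{0}^{T}|B_{2}^{\epsilon}|^{2}dt]=o(\epsilon^{2})$ is in hand, the two asserted estimates follow simultaneously from the single linear--FBSDE bound applied to (\ref{x-x1-x2})--(\ref{y-y1-y2}), and the remaining work is routine H\"older bookkeeping.
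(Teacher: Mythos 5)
Your proposal is correct and follows essentially the same route as the paper: the paper likewise applies the a priori $L^{2}$-estimate of Lemma \ref{est-l} to the linear FBSDE (\ref{x-x1-x2})--(\ref{y-y1-y2}) for $(\xi^{3,\epsilon},\eta^{3,\epsilon},\zeta^{3,\epsilon})$, reducing the claim to squared bounds on $A_{2}^{\epsilon}$, $B_{2}^{\epsilon}$, $C_{2}^{\epsilon}$, $D_{2}^{\epsilon}(T)$, with the same observation that the affine form of $\sigma$ in $z$ removes the $\delta\sigma_{z}$ and $\sigma_{zz}$ contributions from $B_{2}^{\epsilon}$ (the paper then delegates the term-by-term H\"older bookkeeping to Lemmas 3.15 and 3.16 of \cite{Hu-JX}, which is exactly the bookkeeping you sketch).
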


\begin{proof}
We use all notations in Lemma \ref{est-second-order}. By Lemma \ref{est-l} in
Appendix, we have%
\[%
\begin{array}
[c]{l}%
\mathbb{E}\left[  \sup\limits_{t\in\lbrack0,T]}(|\xi^{3,\epsilon}%
(t)|^{2}+|\eta^{3,\epsilon}(t)|^{2})+\int_{0}^{T}|\zeta^{3,\epsilon}%
(t)|^{2}dt\right] \\
\leq C\mathbb{E}\left[  \left(  \int_{0}^{T}|A_{2}^{\epsilon}(t)|dt\right)
^{2}+\left(  \int_{0}^{T}|C_{2}^{\epsilon}(t)|dt\right)  ^{2}+\int_{0}%
^{T}|B_{2}^{\epsilon}(t)|^{2}dt+|D_{2}^{\epsilon}(T)|^{2}\right]  ,
\end{array}
\]
where $A_{2}^{\epsilon}(\cdot)$, $C_{2}^{\epsilon}(\cdot)$, $D_{2}^{\epsilon
}(T)$ are the same as Lemma \ref{est-second-order}, and
\[%
\begin{array}
[c]{ll}%
B_{2}^{\epsilon}(t)= & \left[  \delta\sigma_{x}(t)\xi^{2,\epsilon}%
(t)+\delta\sigma_{y}(t)\eta^{2,\epsilon}(t)\right]  I_{E_{\epsilon}}%
(t)+\frac{1}{2}\widetilde{D^{2}\sigma^{\epsilon}}(t)\left(  \xi^{1,\epsilon
}(t)^{\intercal},\eta^{1,\epsilon}(t)\right)  ^{2}\\
& -\frac{1}{2}D^{2}\sigma(t)\left(  X_{1}(t)^{\intercal},Y_{1}(t)\right)
^{2}.
\end{array}
\]
Combing Lemmas 3.15 and 3.16 in \cite{Hu-JX}, we can obtain the desired estimates.
\end{proof}

\subsection{Maximum principle}

\label{section-mp}Note that $Y_{1}(0)=0$, by Lemma \ref{est-second-order}, we
have
\[
J(u^{\epsilon}(\cdot))-J(\bar{u}(\cdot))=Y^{\epsilon}(0)-\bar{Y}%
(0)=Y_{2}(0)+o(\epsilon).
\]
In order to obtain $Y_{2}(0)$, we introduce the following second-order adjoint
equation:
\begin{equation}
\left\{
\begin{array}
[c]{l}%
-dP(t)\\
=\left\{  \left(  D\sigma(t)[I_{n\times n},p(t),K_{1}(t)]^{\intercal}\right)
^{\intercal}P(t)D\sigma(t)[I_{n\times n},p(t),K_{1}(t)]^{\intercal
}+P(t)Db(t)[I_{n\times n},p(t),K_{1}(t)]^{\intercal}\right. \\
+\left(  Db(t)[I_{n\times n},p(t),K_{1}(t)]^{\intercal}\right)  ^{\intercal
}P(t)+P(t)H_{y}(t)+Q(t)D\sigma(t)[I_{n\times n},p(t),K_{1}(t)]^{\intercal}\\
\left.  +\left(  D\sigma(t)[I_{n\times n},p(t),K_{1}(t)]^{\intercal}\right)
^{\intercal}Q(t)+\left[  I_{n\times n},p(t),K_{1}(t)\right]  D^{2}H(t)\left[
I_{n\times n},p(t),K_{1}(t)\right]  ^{\intercal}+H_{z}(t)K_{2}(t)\right\}
dt\\
-Q(t)dB(t),\\
P(T)=\phi_{xx}(\bar{X}(T)),
\end{array}
\right.  \label{eq-P}%
\end{equation}
where
\[%
\begin{array}
[c]{ll}%
H(t,x,y,z,u,p,q)= & g(t,x,y,z,u)+\left\langle p,b(t,x,y,z,u)\right\rangle
+\left\langle q,\sigma(t,x,y,z,u)\right\rangle ,
\end{array}
\]%
\[%
\begin{array}
[c]{ll}%
K_{2}(t)= & (1-\left\langle p(t),\sigma_{z}(t)\right\rangle )^{-1}\left\{
\sigma_{y}(t)p(t)^{\intercal}P(t)+\left(  \sigma_{x}(t)+\sigma_{y}%
(t)p(t)^{\intercal}+\sigma_{z}(t)K_{1}(t)^{\intercal}\right)  ^{\intercal
}P(t)\right\} \\
& +(1-\left\langle p(t),\sigma_{z}(t)\right\rangle )^{-1}\left\{  P(t)\left(
\sigma_{x}(t)+\sigma_{y}(t)p(t)^{\intercal}+\sigma_{z}(t)K_{1}(t)^{\intercal
}\right)  +Q(t)+p(t)D^{2}\sigma(t)\left(  I_{n\times n},p(t),K_{1}(t)\right)
^{2}\right\}  ,
\end{array}
\]
and $p(t)D^{2}\sigma(t)\left(  I_{n\times n},p(t),K_{1}(t)\right)  ^{2}%
\in\mathbb{R}^{n\times n}$ such that
\[
\left\langle p(t)D^{2}\sigma(t)\left(  I_{n\times n},p(t),K_{1}(t)\right)
^{2}X_{1}(t),X_{1}(t)\right\rangle =\left\langle p(t),D^{2}\sigma(t)\left(
X_{1}(t)^{\intercal},Y_{1}(t),\left\langle K_{1}(t),X_{1}(t)\right\rangle
\right)  ^{2}\right\rangle ,
\]
$DH(t)$, $D^{2}H(t)$ are defined similar to $D\psi$ and $D^{2}\psi$.

(\ref{eq-P}) is a linear BSDE with uniformly Lipschitz continuous coefficients
and it has a unique solution. Before we deduce the relationship between
$X_{2}(\cdot)$ and $(Y_{2}(\cdot),Z_{2}(\cdot))$, we introduce the following
equation:
\begin{equation}%
\begin{array}
[c]{rl}%
\hat{Y}(t)= & \int_{t}^{T}\left\{  (H_{y}(s)+g_{z}(s)\left\langle \sigma
_{y}(s),p(s)\right\rangle (1-\left\langle p(s),\sigma_{z}(s)\right\rangle
)^{-1})\hat{Y}(s)\right. \\
& \text{ \ }+\left(  H_{z}(s)+g_{z}(s)\left\langle \sigma_{z}%
(s),p(s)\right\rangle (1-\left\langle p(s),\sigma_{z}(s)\right\rangle
)^{-1}\right)  \hat{Z}(s)\\
& \text{ \ }\left.  +\left[  \delta H(s,\Delta)+\frac{1}{2}\delta
\sigma(s,\Delta)^{\intercal}P(s)\delta\sigma(s,\Delta)\right]  I_{E_{\epsilon
}}(s)\right\}  ds-\int_{t}^{T}\hat{Z}(s)dB(s),
\end{array}
\label{eq-y-hat}%
\end{equation}
where $\delta H(s,\Delta):=\left\langle p(s),\delta b(s,\Delta)\right\rangle
+\left\langle q(s),\delta\sigma(s,\Delta)\right\rangle +\delta g(s,\Delta)$.
It is also a linear BSDE and has a unique solution.

\begin{lemma}
\label{relation-y2} Under the same Assumptions as in Lemma
\ref{est-second-order}. Then we have%
\[%
\begin{array}
[c]{rl}%
Y_{2}(t) & =\left\langle p(t),X_{2}(t)\right\rangle +\frac{1}{2}\left\langle
P(t)X_{1}(t),X_{1}(t)\right\rangle +\hat{Y}(t),\\
Z_{2}(t) & =\mathbf{I(t)}+\hat{Z}(t),
\end{array}
\]
where $(\hat{Y}(\cdot),\hat{Z}(\cdot))$ is the solution to (\ref{eq-y-hat})
and
\begin{align*}
\mathbf{I(t)}  &  =\left\langle K_{1}(t),X_{2}(t)\right\rangle +\frac{1}%
{2}\left\langle K_{2}(t)X_{1}(t),X_{1}(t)\right\rangle +(1-\left\langle
p(t),\sigma_{z}(t)\right\rangle )^{-1}\left\langle p(t),\sigma_{y}(t)\hat
{Y}(t)+\sigma_{z}(t)\hat{Z}(t)\right\rangle \\
&  \text{ }\;+\left\langle P(t)\delta\sigma(t,\Delta),X_{1}(t)\right\rangle
I_{E_{\epsilon}}(t)+(1-\left\langle p(t),\sigma_{z}(t)\right\rangle
)^{-1}\left\langle p(t),\delta\sigma_{x}(t,\Delta)X_{1}(t)\right\rangle
I_{E_{\epsilon}}(t)\\
&  \;\ +(1-\left\langle p(t),\sigma_{z}(t)\right\rangle )^{-1}\left[
\left\langle p(t),\delta\sigma_{y}(t,\Delta)\left\langle p(t),X_{1}%
(t)\right\rangle +\delta\sigma_{z}(t,\Delta)\left\langle K_{1}(t),X_{1}%
(t)\right\rangle \right\rangle \right]  I_{E_{\epsilon}}(t).
\end{align*}

\end{lemma}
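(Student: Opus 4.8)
The plan is to verify the asserted representation by an ansatz-plus-It\^{o} argument, exploiting that $(Y_{2},Z_{2})$ is characterized as the \emph{unique} solution of the linear BSDE (\ref{new-form-y2}). Concretely, I would set
\[
\tilde{Y}(t):=\langle p(t),X_{2}(t)\rangle+\tfrac{1}{2}\langle P(t)X_{1}(t),X_{1}(t)\rangle+\hat{Y}(t),
\]
identify an appropriate diffusion $\tilde{Z}$ by It\^{o}'s formula, and show that $(\tilde{Y},\tilde{Z})$ solves the same equation (\ref{new-form-y2}); uniqueness then forces $Y_{2}=\tilde{Y}$ and $Z_{2}=\tilde{Z}$, which is the claim. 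The three summands play distinct roles: $\langle p,X_{2}\rangle$ absorbs the terms linear in $X_{2}$, the quadratic form $\tfrac{1}{2}\langle PX_{1},X_{1}\rangle$ absorbs the Hessian (quadratic-in-$X_{1}$) terms, and $\hat{Y}$ absorbs the remaining $E_{\epsilon}$-supported spike terms.

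First I would check the terminal condition. Since $p(T)=\phi_{x}(\bar{X}(T))$ by (\ref{eq-p}), $P(T)=\phi_{xx}(\bar{X}(T))$ by (\ref{eq-P}), and $\hat{Y}(T)=0$ by (\ref{eq-y-hat}), we obtain $\tilde{Y}(T)=\langle\phi_{x}(\bar{X}(T)),X_{2}(T)\rangle+\tfrac{1}{2}\langle\phi_{xx}(\bar{X}(T))X_{1}(T),X_{1}(T)\rangle$, which coincides with $Y_{2}(T)$ in (\ref{new-form-y2}). Then I would apply It\^{o}'s formula to each summand: to $\langle p,X_{2}\rangle$ using the dynamics (\ref{eq-p}) of $p$ and (\ref{new-form-x2}) of $X_{2}$; to $\tfrac{1}{2}\langle PX_{1},X_{1}\rangle$ using (\ref{eq-P}), (\ref{new-form-x1}) together with the identities $Y_{1}=\langle p,X_{1}\rangle$ and $Z_{1}-\Delta I_{E_{\epsilon}}=\langle K_{1},X_{1}\rangle$ from Lemma \ref{lemma-y1}, which turn the diffusion of $X_{1}$ into a linear function of $X_{1}$; and to $\hat{Y}$ using (\ref{eq-y-hat}).

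Collecting the $dB(t)$-coefficients yields the diffusion $\tilde{Z}$. The subtle point here is that this coefficient itself contains $\tilde{Z}$, because the term $\sigma_{z}(t)Z_{2}(t)$ enters the diffusion of $X_{2}$ in (\ref{new-form-x2}); matching the $dB$-terms therefore produces an algebraic equation of the schematic form $\tilde{Z}=\langle K_{1},X_{2}\rangle+\cdots+\langle p,\sigma_{z}\rangle\tilde{Z}+\cdots$. Solving this scalar linear equation generates exactly the factor $(1-\langle p(t),\sigma_{z}(t)\rangle)^{-1}$ and reproduces the expression $\mathbf{I}(t)+\hat{Z}(t)$ claimed for $Z_{2}$, where the invertibility (and the boundedness of the inverse) is guaranteed by Assumption \ref{assm-p-q}.

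Finally I would compare the $dt$-coefficients, and this is the main obstacle. After substituting $\tilde{Z}$ back and regrouping, one must show that the drift of $\tilde{Y}$ equals the driver $-[\langle g_{x},X_{2}\rangle+g_{y}Y_{2}+g_{z}Z_{2}+(\langle q,\delta\sigma(t,\Delta)\rangle+\delta g(t,\Delta))I_{E_{\epsilon}}+\tfrac{1}{2}[\cdots]D^{2}g[\cdots]^{\intercal}]$ in (\ref{new-form-y2}). The delicate bookkeeping is that the Hessian contributions $D^{2}b$ and $D^{2}\sigma$ arising from the drift and diffusion of $X_{2}$ must combine with the quadratic-form terms generated by $dP$ and by the It\^{o} cross-variation between $P$ and $X_{1}$, so as to reproduce the $\tfrac{1}{2}[\cdots]D^{2}g[\cdots]^{\intercal}$ term, consistently with the definitions of $H$, $D^{2}H$, and $K_{2}$ built into (\ref{eq-P}). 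The terms linear in $X_{2}$ should cancel precisely by the definition of the adjoint equation (\ref{eq-p}) for $p$ and of $K_{1}$ in (\ref{def-k1}), while the $E_{\epsilon}$-supported inhomogeneous terms should be matched by the driver of the $\hat{Y}$-equation (\ref{eq-y-hat}). Once every term is accounted for, uniqueness of the solution to the linear BSDE (\ref{new-form-y2}) completes the proof.
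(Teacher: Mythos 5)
Your proposal is correct and follows essentially the same route as the paper: the paper proves this lemma by "the same method as in Lemma \ref{lemma-y1}", i.e.\ the ansatz-plus-It\^{o} verification carried out in Lemma \ref{appen-th-linear-fbsde} of the Appendix, where the diffusion coefficient is obtained by solving the implicit algebraic equation that produces the factor $(1-\langle p(t),\sigma_{z}(t)\rangle)^{-1}$ and the drift comparison fixes the generators of the auxiliary equations. Your account of the terminal-condition check, the role of the identities $Y_{1}=\langle p,X_{1}\rangle$ and $Z_{1}-\Delta I_{E_{\epsilon}}=\langle K_{1},X_{1}\rangle$, and the cancellation of the $X_{2}$-linear and Hessian terms against (\ref{eq-p}), (\ref{eq-P}) and (\ref{eq-y-hat}) matches the intended computation.
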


\begin{proof}
Using the same method as in Lemma \ref{lemma-y1}, we can deduce the above
relationship similarly.
\end{proof}

Consider the following equation:
\begin{equation}
\left\{
\begin{array}
[c]{rl}%
d\gamma(t)= & \gamma(t)\left[  H_{y}(t)+(1-\left\langle p(t),\sigma
_{z}(t)\right\rangle )^{-1}g_{z}(t)\left\langle p(t),\sigma_{y}%
(t)\right\rangle \right]  dt\\
& +\gamma(t)\left[  H_{z}(t)+(1-\left\langle p(t),\sigma_{z}(t)\right\rangle
)^{-1}g_{z}(t)\left\langle p(t),\sigma_{z}(t)\right\rangle \right]  dB(t),\\
\gamma(0)= & 1.
\end{array}
\right.  \label{eq-gamma}%
\end{equation}
Applying It\^{o}'s formula to $\gamma(t)\hat{Y}(t)$, we obtain
\[%
\begin{array}
[c]{rl}%
\hat{Y}(0)= & \mathbb{E}\left\{  \int_{0}^{T}\gamma(t)\left[  \delta
H(t,\Delta)+\frac{1}{2}\delta\sigma(s,\Delta)^{\intercal}P(s)\delta
\sigma(s,\Delta)\right]  I_{E_{\epsilon}}(t)dt\right\}  .
\end{array}
\]
Define
\begin{equation}%
\begin{array}
[c]{l}%
\mathcal{H}(t,x,y,z,u,p,q,P)\\
=\left\langle p,b(t,x,y,z+\Delta(t),u)\right\rangle +\left\langle
q,\sigma(t,x,y,z+\Delta(t),u)\right\rangle +g(t,x,y,z+\Delta(t),u)\\
\text{ }+\frac{1}{2}(\sigma(t,x,y,z+\Delta(t),u)-\sigma(t,\bar{X}(t),\bar
{Y}(t),\bar{Z}(t),\bar{u}(t)))^{\intercal}P(\sigma(t,x,y,z+\Delta
(t),u)-\sigma(t,\bar{X}(t),\bar{Y}(t),\bar{Z}(t),\bar{u}(t))),
\end{array}
\label{def-H}%
\end{equation}
where $\Delta(t)$ is defined in (\ref{def-delt}) corresponding to $u(t)=u$. It
is easy to check that
\begin{align*}
&  \delta H(t,\Delta)+\frac{1}{2}\delta\sigma(t,\Delta)^{\intercal}%
P(t)\delta\sigma(t,\Delta)\\
&  =\mathcal{H}(t,\bar{X}(t),\bar{Y}(t),\bar{Z}%
(t),u(t),p(t),q(t),P(t))-\mathcal{H}(t,\bar{X}(t),\bar{Y}(t),\bar{Z}%
(t),\bar{u}(t),p(t),q(t),P(t)).
\end{align*}
Noting that $\gamma(t)>0$ for $t\in\lbrack0,T]$, then we obtain the following
maximum principle.

\begin{theorem}
\label{Th-MP}Under the same Assumptions as in Lemma \ref{est-second-order}.
Let $\bar{u}(\cdot)\in\mathcal{U}[0,T]$ be optimal and $(\bar{X}(\cdot
),\bar{Y}(\cdot),\bar{Z}(\cdot))$ be the corresponding state processes of
(\ref{state-eq}). Then the following stochastic maximum principle holds:
\begin{equation}
\mathcal{H}(t,\bar{X}(t),\bar{Y}(t),\bar{Z}(t),u,p(t),q(t),P(t))\geq
\mathcal{H}(t,\bar{X}(t),\bar{Y}(t),\bar{Z}(t),\bar{u}%
(t),p(t),q(t),P(t)),\ \ \ \forall u\in U\ a.e.,\ a.s., \label{mp-1}%
\end{equation}
where $(p\left(  \cdot\right)  ,q\left(  \cdot\right)  )$, $\left(  P\left(
\cdot\right)  ,Q\left(  \cdot\right)  \right)  $ satisfy (\ref{eq-p}),
(\ref{eq-P}) respectively, and $\Delta(\cdot)$ satisfies (\ref{def-delt}).
\end{theorem}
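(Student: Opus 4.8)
The plan is to turn the first-order optimality condition $J(u^{\epsilon}(\cdot))-J(\bar{u}(\cdot))\geq0$ into the pointwise Hamiltonian inequality (\ref{mp-1}) by first identifying $Y_2(0)$ explicitly and then running a Lebesgue-point spike-variation argument.

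First I would evaluate the representation in Lemma \ref{relation-y2} at $t=0$. Because $X_1(0)=0$ and $X_2(0)=0$ by the initial conditions in (\ref{new-form-x1}) and (\ref{new-form-x2}), the two leading terms vanish and we are left with $Y_2(0)=\hat{Y}(0)$. Combined with $J(u^{\epsilon}(\cdot))-J(\bar{u}(\cdot))=Y^{\epsilon}(0)-\bar{Y}(0)=Y_2(0)+o(\epsilon)$, which follows from $Y_1(0)=0$ and Lemma \ref{est-second-order}, this yields
\[
J(u^{\epsilon}(\cdot))-J(\bar{u}(\cdot))=\hat{Y}(0)+o(\epsilon).
\]

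Next I would substitute the value of $\hat{Y}(0)$ obtained by applying It\^{o}'s formula to $\gamma(t)\hat{Y}(t)$, namely
\[
\hat{Y}(0)=\mathbb{E}\left[\int_0^T \gamma(t)\left(\delta H(t,\Delta)+\frac{1}{2}\delta\sigma(t,\Delta)^{\intercal}P(t)\delta\sigma(t,\Delta)\right)I_{E_{\epsilon}}(t)\,dt\right],
\]
and rewrite the integrand, via the identity preceding the theorem, as the Hamiltonian increment $\mathcal{H}(t,\bar{X}(t),\bar{Y}(t),\bar{Z}(t),u(t),p(t),q(t),P(t))-\mathcal{H}(t,\bar{X}(t),\bar{Y}(t),\bar{Z}(t),\bar{u}(t),p(t),q(t),P(t))$. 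Optimality of $\bar{u}(\cdot)$ then forces $\hat{Y}(0)+o(\epsilon)\geq0$, so for every admissible $u(\cdot)$ and every measurable $E_{\epsilon}\subset[0,T]$ with $|E_{\epsilon}|=\epsilon$,
\[
\mathbb{E}\left[\int_{E_{\epsilon}}\gamma(t)\left(\mathcal{H}(t,\bar{X}(t),\bar{Y}(t),\bar{Z}(t),u(t),p(t),q(t),P(t))-\mathcal{H}(t,\bar{X}(t),\bar{Y}(t),\bar{Z}(t),\bar{u}(t),p(t),q(t),P(t))\right)dt\right]\geq o(\epsilon).
\]

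Finally I would localize this integrated inequality. Dividing by $\epsilon$, choosing $E_{\epsilon}=[\tau,\tau+\epsilon]$ and letting $\epsilon\to0$, the Lebesgue differentiation theorem gives, for a.e.\ $\tau$, the expectation inequality $\mathbb{E}[\gamma(\tau)(\mathcal{H}(\tau,\bar{X}(\tau),\bar{Y}(\tau),\bar{Z}(\tau),u(\tau),p(\tau),q(\tau),P(\tau))-\mathcal{H}(\tau,\bar{X}(\tau),\bar{Y}(\tau),\bar{Z}(\tau),\bar{u}(\tau),p(\tau),q(\tau),P(\tau)))]\geq0$. To upgrade this to an $\omega$-pointwise statement I would specialize to $u(\tau)=v\,\mathbf{1}_A+\bar{u}(\tau)\,\mathbf{1}_{A^{c}}$ with $v\in U$ arbitrary and $A\in\mathcal{F}_{\tau}$ arbitrary; since the bracketed integrand is $\mathcal{F}_{\tau}$-measurable and $A$ is free, this forces $\gamma(\tau)(\mathcal{H}(\tau,\ldots,v,\ldots)-\mathcal{H}(\tau,\ldots,\bar{u}(\tau),\ldots))\geq0$ a.s., and dividing by $\gamma(\tau)>0$ gives (\ref{mp-1}) for each fixed $v$. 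Separability of $U$ together with the continuity of $\mathcal{H}$ in the control variable then promotes this to all $u\in U$ on a common null set. I expect the main obstacle to be exactly this last passage from the integrated/expectation inequality to the simultaneous pointwise-in-$(t,\omega)$ statement: it requires care with the $\mathcal{F}_{\tau}$-conditioning and with organizing the exceptional null sets so that a single null set serves every $u\in U$ at once.
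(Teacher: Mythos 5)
Your proposal is correct and follows essentially the same route as the paper: it uses $X_1(0)=X_2(0)=0$ together with Lemma \ref{relation-y2} to get $Y_2(0)=\hat{Y}(0)$, the It\^{o} computation with $\gamma(\cdot)$ from (\ref{eq-gamma}) to represent $\hat{Y}(0)$ as an integral of the Hamiltonian increment over $E_{\epsilon}$, and then the positivity of $\gamma$ plus the standard Lebesgue-point/spike localization to reach (\ref{mp-1}). The only difference is that you spell out the final localization (choice of $E_{\epsilon}=[\tau,\tau+\epsilon]$, the $\mathcal{F}_{\tau}$-measurable perturbation $u(\tau)=v\mathbf{1}_A+\bar{u}(\tau)\mathbf{1}_{A^c}$, and the handling of null sets via separability of $U$), which the paper leaves implicit.
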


\section{The case when $q$ is unbounded}

In this section, we consider the case when $q$ is unbounded and propose the
second kind of assumptions.

The relations $Y_{1}(t)=\left\langle p(t),X_{1}(t)\right\rangle $ and
$\ Z_{1}(t)=\left\langle K_{1}(t),X_{1}(t)\right\rangle +\Delta
(t)I_{E_{\epsilon}}(t)$ in Lemma \ref{lemma-y1}, is the key point to derive
the maximum principle (\ref{mp-1}). Note that to prove Lemma \ref{lemma-y1},
we need Assumption \ref{assm-p-q}, which implies%
\begin{equation}
\mathbb{E}\left[  \sup\limits_{t\in\lbrack0,T]}|\tilde{X}_{1}(t)|^{2}\right]
<\infty. \label{eq-new211}%
\end{equation}
However, under the following assumption, combing Theorems
\ref{appen-th-linear-fbsde-unb} we can obtain the relations $Y_{1}%
(t)=\left\langle p(t),X_{1}(t)\right\rangle $ and$\ Z_{1}(t)=\left\langle
K_{1}(t),X_{1}(t)\right\rangle +\Delta(t)I_{E_{\epsilon}}(t)$ without the
Assumption $q(\cdot)$ is bounded.

\begin{assumption}
\label{assm-sig-small} $\sigma(t,x,y,z,u)=A(t)z+\sigma_{1}(t,x,y,u)$.
\end{assumption}

\begin{assumption}
\label{assm-ex-unb}For any $u^{\epsilon}(\cdot)\in\mathcal{U}[0,T]$ and
$\beta\in\lbrack2,8)$, the FBSDE (\ref{eq-lfb}) has a unique solution
$(\hat{X}(\cdot),\hat{Y}(\cdot),\hat{Z}(\cdot))\in L_{\mathcal{F}}^{\beta
}(\Omega;C([0,T],\mathbb{R}^{n}))\times L_{\mathcal{F}}^{\beta}(\Omega
;C([0,T],\mathbb{R}))\times L_{\mathcal{F}}^{2,\beta}([0,T];\mathbb{R})$.
Moreover, we assume that the following estimate for FBSDE (\ref{eq-lfb})
holds, that is,%
\[%
\begin{array}
[c]{l}%
||(\hat{X},\hat{Y},\hat{Z})||_{\beta}^{\beta}=\mathbb{E}\left\{
\sup\limits_{t\in\lbrack0,T]}\left[  |\hat{X}(t)|^{\beta}+|\hat{Y}(t)|^{\beta
}\right]  +\left(  \int_{0}^{T}|\hat{Z}(t)|^{2}dt\right)  ^{\frac{\beta}{2}%
}\right\} \\
\ \ \ \ \ \ \ \leq C\mathbb{E}\left\{  \left(  \int_{0}^{T}[|L_{1}%
(t)|+|L_{3}(t)|]dt\right)  ^{\beta}+\left(  \int_{0}^{T}|L_{2}(t)|^{2}%
dt\right)  ^{\frac{\beta}{2}}+|\varsigma|^{\beta}+|x_{0}|^{\beta}\right\}  ,
\end{array}
\]
where $C$ depends on $T$, $\beta$, $\left\Vert \psi_{x}\right\Vert _{\infty}%
$,$\left\Vert \psi_{y}\right\Vert _{\infty}$, $\left\Vert \psi_{z}\right\Vert
_{\infty}$, $c_{1}$. \label{est-fbsde-lp}
\end{assumption}

In this case, the first-order adjoint equation becomes
\begin{equation}
\left\{
\begin{array}
[c]{rl}%
dp(t)= & -\left\{  g_{x}(t)+g_{y}(t)p(t)+g_{z}(t)K_{1}(t)+b_{x}%
(t)p(t)+\left\langle b_{y}(t),p(t)\right\rangle p(t)+\left\langle
b_{z}(t),K_{1}(t)\right\rangle p(t)\right. \\
& \left.  +\sigma_{x}(t)q(t)+\left\langle \sigma_{y}(t),p(t)\right\rangle
q(t)+\left\langle A(t),K_{1}(t)\right\rangle q(t)\right\}  dt+q(t)dB(t),\\
p(T)= & \phi_{x}(\bar{X}(T)),
\end{array}
\right.  \label{eq-p-q-unb}%
\end{equation}
where
\[
K_{1}(t)=(1-\left\langle p(t),A(t)\right\rangle )^{-1}\left[  \sigma
_{x}(t)p(t)+\left\langle \sigma_{y}(t),p(t)\right\rangle p(t)+q(t)\right]  .
\]

\begin{assumption}
\label{assm-p-q-unb} Assume the BSDEs (\ref{eq-p-q-unb}) have a unique
solution $(p(\cdot),q(\cdot))\in L_{\mathcal{F}}^{\infty}(\Omega
;C([0,T],\mathbb{R}^{n}))\times L_{\mathcal{F}}^{2,2}([0,T];\mathbb{R}^{n})$
such that $\left\vert 1-\left\langle p(t),\gamma_{2}(t)\right\rangle
\right\vert ^{-1}$ is bounded.
\end{assumption}

The first-order variational equation becomes
\[
\left\{
\begin{array}
[c]{rl}%
dX_{1}(t)= & \left[  b_{x}(t)X_{1}(t)+b_{y}(t)Y_{1}(t)+b_{z}(t)(Z_{1}%
(t)-\Delta(t)I_{E_{\epsilon}}(t))\right]  dt\\
& +\left[  \sigma_{x}(t)X_{1}(t)+\sigma_{y}(t)Y_{1}(t)+A(t)(Z_{1}%
(t)-\Delta(t)I_{E_{\epsilon}}(t))+\delta\sigma(t,\Delta)I_{E_{\epsilon}%
}(t)\right]  dB(t),\\
X_{1}(0)= & 0,
\end{array}
\right.
\]
and
\[
\left\{
\begin{array}
[c]{lll}%
dY_{1}(t) & = & -\left[  \left\langle g_{x}(t),X_{1}(t)\right\rangle
+g_{y}(t)Y_{1}(t)+g_{z}(t)(Z_{1}(t)-\Delta(t)I_{E_{\epsilon}}(t))-\left\langle
q(t),\delta\sigma(t,\Delta)\right\rangle I_{E_{\epsilon}}(t)\right]
dt+Z_{1}(t)dB(t),\\
Y_{1}(T) & = & \phi_{x}(\bar{X}(T))X_{1}(T),
\end{array}
\right.
\]
where
\[
\Delta(t)=\left(  1-\left\langle p(t),A(t)\right\rangle \right)
^{-1}\left\langle p(t),\sigma_{1}(t,\bar{X}(t),\bar{Y}(t),u(t))-\sigma
_{1}(t,\bar{X}(t),\bar{Y}(t),\bar{u}(t))\right\rangle .
\]

\begin{assumption}
\label{assm-4}Suppose the following SDE
\begin{equation}
\left\{
\begin{array}
[c]{rl}%
d\tilde{X}_{1}(t)= & \left[  b_{x}(t)\tilde{X}_{1}(t)+b_{y}(t)\left\langle
p(t),\tilde{X}_{1}(t)\right\rangle +b_{z}(t)\left\langle K_{1}(t),\tilde
{X}_{1}(t)\right\rangle \right]  dt\\
& +\left[  \sigma_{x}(t)\tilde{X}_{1}(t)+\sigma_{y}(t)\left\langle
p(t),\tilde{X}_{1}(t)\right\rangle +A(t)\left\langle K_{1}(t),\tilde{X}%
_{1}(t)\right\rangle +\delta\sigma(t,\Delta)I_{E_{\epsilon}}(t)\right]
dB(t),\\
\tilde{X}_{1}(0)= & 0,
\end{array}
\right.  \label{eq-1214-1}%
\end{equation}
has a unique solution $\tilde{X}_{1}(\cdot)\in L_{\mathcal{F}}^{4}%
(\Omega;C([0,T],\mathbb{R}^{n}))$.
\end{assumption}

By Theorem \ref{appen-th-linear-fbsde-unb}, we have the following relationship.

\begin{lemma}
\label{est-one-order-q-unbound} Suppose that Assumptions \ref{assmlip},
\ref{assm-ex}, \ref{assm-delta}, \ref{assm-sig-small}, \ref{assm-ex-unb},
\ref{assm-p-q-unb} and \ref{assm-4} hold. Then we have
\begin{align*}
Y_{1}(t)  &  =\left\langle p(t),X_{1}(t)\right\rangle ,\\
Z_{1}(t)  &  =\left\langle K_{1}(t),X_{1}(t)\right\rangle +\Delta
(t)I_{E_{\epsilon}}(t),
\end{align*}
where $p(\cdot)$ is the solution of (\ref{eq-p-q-unb}).
\end{lemma}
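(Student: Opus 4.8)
The plan is to reproduce the verification argument behind Lemma \ref{lemma-y1}, but to replace its appeal to Lemma \ref{appen-th-linear-fbsde} (valid when $q$ is bounded) by an appeal to the unbounded-$q$ representation result, Theorem \ref{appen-th-linear-fbsde-unb}. First I would form the candidate triple $(\tilde{X}_1, \langle p, \tilde{X}_1\rangle, \langle K_1, \tilde{X}_1\rangle + \Delta I_{E_\epsilon})$, where $\tilde{X}_1(\cdot)$ is the unique $L^4_{\mathcal{F}}(\Omega; C([0,T],\mathbb{R}^n))$-solution of the closed linear SDE (\ref{eq-1214-1}) furnished by Assumption \ref{assm-4}, $p(\cdot)$ solves the first-order adjoint equation (\ref{eq-p-q-unb}), and $K_1$ is given by the expression displayed immediately below (\ref{eq-p-q-unb}). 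The SDE (\ref{eq-1214-1}) is exactly what the forward equation for $X_1$ becomes once one substitutes the guesses $Y_1 = \langle p, X_1\rangle$ and $Z_1 = \langle K_1, X_1\rangle + \Delta I_{E_\epsilon}$, so this candidate is the natural one; the objective is to show it genuinely solves the first-order variational FBSDE, whereupon the uniqueness granted by Assumption \ref{assm-ex-unb} forces $(X_1, Y_1, Z_1)$ to coincide with it, which is precisely the asserted pair of identities.

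Second, I would carry out the It\^{o} verification. Applying It\^{o}'s formula to $\langle p(t), \tilde{X}_1(t)\rangle$, with $dp$ taken from (\ref{eq-p-q-unb}) and $d\tilde{X}_1$ from (\ref{eq-1214-1}), the diffusion coefficient of $\langle p, \tilde{X}_1\rangle$ collects to $\langle q + \sigma_x^{\intercal} p + \langle \sigma_y, p\rangle p,\, \tilde{X}_1\rangle + \langle p, \delta\sigma(t,\Delta)\rangle I_{E_\epsilon}$, while the coupling through $A(t)$ contributes $\langle p, A\rangle(\tilde{Z}_1 - \Delta I_{E_\epsilon})$. Using $\sigma_z = A(t)$ (Assumption \ref{assm-sig-small}) together with the defining algebraic relation (\ref{def-delt}), which reads $\Delta = \langle p, \delta\sigma(t,\Delta)\rangle$ and hence gives $\langle p, \delta\sigma(t,\Delta)\rangle I_{E_\epsilon} = \Delta I_{E_\epsilon}$, one solves the resulting scalar equation for $\tilde{Z}_1$ and recovers $\tilde{Z}_1 = \langle K_1, \tilde{X}_1\rangle + \Delta I_{E_\epsilon}$ precisely because $1 - \langle p, A\rangle$ is invertible with bounded inverse (Assumption \ref{assm-p-q-unb}) and $K_1$ is defined through that inverse. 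The drift terms then cancel against the drift of (\ref{eq-p-q-unb}) so that the remaining drift matches that of the $Y_1$-equation, and the terminal data agree since $p(T) = \phi_x(\bar{X}(T))$. All of this bookkeeping is packaged by Theorem \ref{appen-th-linear-fbsde-unb}, so the written proof need only check that its hypotheses are in force.

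The main obstacle is integrability. In the bounded-$q$ setting, Assumption \ref{assm-p-q} automatically delivers the a priori bound (\ref{eq-new211}), i.e. $\mathbb{E}[\sup_t |\tilde{X}_1(t)|^2] < \infty$, which legitimizes every product and stochastic integral in the It\^{o} computation; here $q$ lies only in $L^{2,2}_{\mathcal{F}}$, so that bound is no longer free. It is instead supplied directly by Assumption \ref{assm-4}, and the linear structure $\sigma = A(t)z + \sigma_1(t,x,y,u)$ is the enabling hypothesis: it keeps $\sigma_z = A$ bounded (so $\langle p, A\rangle$ and $(1-\langle p, A\rangle)^{-1}$ are controllable) and kills the second-order term $\sigma_{zz} \equiv 0$ that would otherwise obstruct the cancellations. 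Combining the $L^4$-bound on $\tilde{X}_1$ from Assumption \ref{assm-4} with $p \in L^\infty_{\mathcal{F}}$ and $q \in L^{2,2}_{\mathcal{F}}$ via H\"{o}lder's inequality makes each cross term integrable, which is exactly what is needed to justify the It\^{o} expansion and to invoke the uniqueness in Assumption \ref{assm-ex-unb}.
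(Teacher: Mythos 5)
Your proposal is correct and follows essentially the same route as the paper: the paper's entire justification for this lemma is the sentence ``By Theorem \ref{appen-th-linke-fbsde-unb}'' (i.e.\ Lemma \ref{appen-th-linear-fbsde-unb}) preceding the statement, and your argument is exactly the content of that appendix result --- take $\tilde{X}_{1}$ from Assumption \ref{assm-4}, verify by It\^{o}'s formula that $(\tilde{X}_{1},\langle p,\tilde{X}_{1}\rangle,\langle K_{1},\tilde{X}_{1}\rangle+\Delta I_{E_{\epsilon}})$ solves the variational FBSDE, and conclude by the uniqueness in Assumption \ref{assm-ex-unb}. You also correctly isolate the one nontrivial point, namely that with $q\in L_{\mathcal{F}}^{2,2}$ only, the $L^{4}$-bound on $\tilde{X}_{1}$ is what makes $\langle K_{1},\tilde{X}_{1}\rangle$ square-integrable and legitimizes the computation, which is precisely what the proof of Lemma \ref{appen-th-linear-fbsde-unb} checks.
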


\begin{lemma}
Under the same Assumptions as in Lemma \ref{est-one-order-q-unbound}, for any
$2\leq\beta<8$, we have the following estimates
\begin{equation}
\mathbb{E}\left[  \sup\limits_{t\in\lbrack0,T]}\left(  |X_{1}(t)|^{\beta
}+|Y_{1}(t)|^{\beta}\right)  \right]  +\mathbb{E}\left[  \left(  \int_{0}%
^{T}|Z_{1}(t)|^{2}dt\right)  ^{\beta/2}\right]  =O(\epsilon^{\beta/2}),
\label{est-x1-y1-q-unbound}%
\end{equation}%
\[%
\begin{array}
[c]{l}%
\mathbb{E}\left[  \sup\limits_{t\in\lbrack0,T]}\left(  |X^{\epsilon}%
(t)-\bar{X}(t)-X_{1}(t)|^{4}+|Y^{\epsilon}(t)-\bar{Y}(t)-Y_{1}(t)|^{4}\right)
\right] \\
+\mathbb{E}\left[  \left(  \int_{0}^{T}|Z^{\epsilon}(t)-\bar{Z}(t)-Z_{1}%
(t)|^{2}dt\right)  ^{2}\right]  =o(\epsilon^{2}).
\end{array}
\]

\end{lemma}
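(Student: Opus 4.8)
The plan is to follow the two-step scheme of the proof of Lemma~\ref{est-one-order}, but with two substitutions dictated by the unboundedness of $q$: the bounded-coefficient estimate of Lemma~\ref{est-l} is replaced by the postulated $L^{p}$-estimate of Assumption~\ref{assm-ex-unb}, and the first-order order bound is obtained by routing through the \emph{decoupled} forward equation (\ref{eq-1214-1}) so that $q$ never enters its source term. Thus I would first prove the order estimate (\ref{est-x1-y1-q-unbound}) and then the $o(\epsilon^{2})$ remainder estimate for $\xi^{2,\epsilon}=X^{\epsilon}-\bar{X}-X_{1}$, $\eta^{2,\epsilon}=Y^{\epsilon}-\bar{Y}-Y_{1}$, $\zeta^{2,\epsilon}=Z^{\epsilon}-\bar{Z}-Z_{1}$.

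For (\ref{est-x1-y1-q-unbound}), Lemma~\ref{est-one-order-q-unbound} (via Theorem~\ref{appen-th-linear-fbsde-unb}) already gives $Y_{1}(t)=\langle p(t),X_{1}(t)\rangle$ and $Z_{1}(t)=\langle K_{1}(t),X_{1}(t)\rangle+\Delta(t)I_{E_{\epsilon}}(t)$, with $X_{1}=\tilde{X}_{1}$ the solution of (\ref{eq-1214-1}) furnished by Assumption~\ref{assm-4}. The decisive point is that the only inhomogeneous term of (\ref{eq-1214-1}) is $\delta\sigma(t,\Delta)I_{E_{\epsilon}}(t)$, which carries no $q$ and, by Assumption~\ref{assm-delta} and the linear growth in Assumption~\ref{assmlip}, is bounded on $E_{\epsilon}$ by $C(1+|\bar{X}(t)|+|\bar{Y}(t)|+|u(t)|+|\bar{u}(t)|)$. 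Hence the $L^{\beta}$-estimate attached to Theorem~\ref{appen-th-linear-fbsde-unb} yields
\[
\mathbb{E}\Big[\sup_{t\in[0,T]}|X_{1}(t)|^{\beta}\Big]\leq C\,\mathbb{E}\Big[\Big(\int_{E_{\epsilon}}|\delta\sigma(t,\Delta)|^{2}\,dt\Big)^{\beta/2}\Big]=O(\epsilon^{\beta/2}),\qquad 2\leq\beta<8,
\]
exactly as in Lemma~\ref{est-epsilon-bar}. The bound for $\sup_{t}|Y_{1}|^{\beta}$ follows since $p$ is bounded (Assumption~\ref{assm-p-q-unb}), and the package estimate of Theorem~\ref{appen-th-linear-fbsde-unb} delivers $(\int_{0}^{T}|Z_{1}|^{2}dt)^{\beta/2}$ of the same order directly, again driven by the $q$-free source on $E_{\epsilon}$.

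For the remainder estimate I would reproduce the linearization of Lemma~\ref{est-one-order}: the triple $(\xi^{2,\epsilon},\eta^{2,\epsilon},\zeta^{2,\epsilon})$ solves the linear FBSDE (\ref{deri-x-x1}) (with $\sigma_{z}=A$) driven by the residuals $A_{1}^{\epsilon},B_{1}^{\epsilon},C_{1}^{\epsilon},D_{1}^{\epsilon}$. Applying the $L^{p}$-estimate of Assumption~\ref{assm-ex-unb} with $\beta=4$ reduces the claim to
\[
\mathbb{E}\Big[\Big(\int_{0}^{T}(|A_{1}^{\epsilon}|+|C_{1}^{\epsilon}|)\,dt\Big)^{4}+\Big(\int_{0}^{T}|B_{1}^{\epsilon}|^{2}\,dt\Big)^{2}+|D_{1}^{\epsilon}(T)|^{4}\Big]=o(\epsilon^{2}).
\]
The contributions built from the coefficient differences $\tilde{b}_{x}^{\epsilon}-b_{x}$, $\tilde{\sigma}_{x}^{\epsilon}(\cdot,\Delta)-\sigma_{x}$, etc., are handled as in Lemma~\ref{est-one-order} using the continuity of the derivatives, the convergence $\Theta^{\epsilon}\to\Theta$, and the first-order order bound just proved; here the linearity of $\sigma$ in $z$ (Assumption~\ref{assm-sig-small}, $\sigma_{zz}\equiv0$) is what removes the troublesome second-order diffusion term and makes the $\beta=4$ bound accessible.

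The main obstacle is the single term of $C_{1}^{\epsilon}$ carrying the unbounded adjoint process, namely $\langle q(t),\delta\sigma(t,\Delta)\rangle I_{E_{\epsilon}}(t)$. I would control $\mathbb{E}[(\int_{E_{\epsilon}}|\langle q,\delta\sigma\rangle|\,dt)^{4}]$ by Cauchy--Schwarz,
\[
\int_{E_{\epsilon}}|q||\delta\sigma|\,dt\leq\Big(\int_{E_{\epsilon}}|q|^{2}\,dt\Big)^{1/2}\Big(\int_{E_{\epsilon}}|\delta\sigma|^{2}\,dt\Big)^{1/2},
\]
followed by H\"older's inequality, so that the $\delta\sigma$-factor contributes the full smallness $O(\epsilon^{2})$ (its integrand is bounded on $E_{\epsilon}$ by the linear-growth term and $|E_{\epsilon}|=\epsilon$), while the $q$-factor is absorbed using the integrability of $q$ supplied by Assumption~\ref{assm-p-q-unb} together with Theorem~\ref{appen-th-linear-fbsde-unb}. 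It is precisely this trade-off between the integrability of $q$ and powers of $\epsilon$ in the H\"older split that forces the strict restriction $\beta<8$ (rather than $\beta\leq8$ as in the bounded case). Once this term is controlled, the remaining residuals in $A_{1}^{\epsilon},B_{1}^{\epsilon},D_{1}^{\epsilon}$ are estimated verbatim as in Lemmas~\ref{est-one-order} and \ref{lemma-est-sup}, which completes the proof.
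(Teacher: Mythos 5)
Your overall strategy --- apply the $L^{\beta}$-estimate postulated in Assumption \ref{assm-ex-unb} to the first-order triple and to the remainder triple $(\xi^{2,\epsilon},\eta^{2,\epsilon},\zeta^{2,\epsilon})$, and control the one source term carrying the unbounded adjoint process, $\langle q(t),\delta\sigma(t,\Delta)\rangle I_{E_{\epsilon}}(t)$, by Cauchy--Schwarz and H\"older using the integrability of $q$ --- is the same as the paper's, which simply invokes these $L^{\beta}$-estimates and refers to Lemma 3.23 of \cite{Hu-JX} for the term-by-term bookkeeping. Your identification of this $q$-term as the reason for the strict restriction $\beta<8$ is also on target.

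There is, however, a genuine gap in how you derive (\ref{est-x1-y1-q-unbound}). You claim that by routing through the decoupled forward equation (\ref{eq-1214-1}) ``$q$ never enters the source term,'' and you then invoke ``the $L^{\beta}$-estimate attached to Theorem \ref{appen-th-linear-fbsde-unb}.'' No such estimate exists: Lemma \ref{appen-th-linear-fbsde-unb} only asserts that the triple built from $\tilde X_{1}$ is the unique $L^{2}$-solution under the a priori hypothesis $\tilde X_{1}\in L_{\mathcal F}^{4}(\Omega;C([0,T],\mathbb R^{n}))$, and Assumption \ref{assm-4} likewise supplies only existence and uniqueness, not a quantitative bound. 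Moreover, the coefficients of (\ref{eq-1214-1}) contain $\langle K_{1}(t),\tilde X_{1}(t)\rangle$ with $K_{1}$ built from the unbounded $q$, so the standard bounded-coefficient SDE estimate you implicitly appeal to does not apply: decoupling does not remove $q$, it merely moves it from the source into the coefficients. The correct route is the one you already use for the remainder: apply Assumption \ref{assm-ex-unb} directly to the first-order variational FBSDE, whose data are $L_{1}=-b_{z}\Delta I_{E_{\epsilon}}$, $L_{2}=(\delta\sigma(\cdot,\Delta)-A\Delta)I_{E_{\epsilon}}$, $L_{3}$ containing $\langle q,\delta\sigma(\cdot,\Delta)\rangle I_{E_{\epsilon}}$, and $\varsigma=0$. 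The $q$-free sources give $O(\epsilon^{\beta/2})$ via Assumption \ref{assm-delta} and the growth conditions exactly as in Lemma \ref{est-epsilon-bar}, while the $q$-term requires the same Cauchy--Schwarz/H\"older split you describe in your last paragraph; note that the H\"older loss (one needs $\mathbb{E}[(\int_{E_{\epsilon}}|\delta\sigma(t,\Delta)|^{2}dt)^{\beta r'/2}]$ finite with $r'>1$, hence $\beta r'\le 8$) already forces $\beta<8$ at this first-order stage, not only in the remainder estimate. With the first part rewritten along these lines, the rest of your argument goes through.
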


\begin{proof}
Applying the $L^{\beta}$-estimates for $(X_{1}(\cdot),Y_{1}(\cdot),Z_{1}%
(\cdot))$, $\left(  \xi^{2,\epsilon}(t),\eta^{2,\epsilon}(t),\zeta
^{2,\epsilon}(t)\right)  $ and following the same steps as Lemma 3.23 in
\cite{Hu-JX}, we can obtain the desired estimates.
\end{proof}

The second-order variational equation becomes
\begin{equation}
\left\{
\begin{array}
[c]{rl}%
dX_{2}(t)= & \left\{  b_{x}(t)X_{2}(t)+b_{y}(t)Y_{2}(t)+b_{z}(t)Z_{2}%
(t)+\delta b(t,\Delta)I_{E_{\epsilon}}(t)\right. \\
& \left.  +\frac{1}{2}D^{2}b(t)\left(  X_{1}(t)^{\intercal},Y_{1}%
(t),\left\langle K_{1}(t),X_{1}(t)\right\rangle \right)  ^{2}\right\}  dt\\
& +\left\{  \sigma_{x}(t)X_{2}(t)+\sigma_{y}(t)Y_{2}(t)+A(t)Z_{2}(t)+\left[
\delta\sigma_{x}(t)X_{1}(t)+\delta\sigma_{y}(t)Y_{1}(t)\right]  I_{E_{\epsilon
}}(t)\right. \\
& \left.  +\frac{1}{2}D^{2}\sigma_{1}(t)\left(  X_{1}(t)^{\intercal}%
,Y_{1}(t)\right)  ^{2}\right\}  dB(t),\\
X_{2}(0)= & 0,
\end{array}
\right.  \label{new-form-x2-q-unbound}%
\end{equation}%
\begin{equation}
\left\{
\begin{array}
[c]{ll}%
dY_{2}(t)= & -\left\{  \left\langle g_{x}(t),X_{2}(t)\right\rangle
+g_{y}(t)Y_{2}(t)+g_{z}(t)Z_{2}(t)+\left\langle q(t),\delta\sigma
(t,\Delta)\right\rangle I_{E_{\epsilon}}(t)+\delta g(t,\Delta)I_{E_{\epsilon}%
}(t)\right. \\
& \left.  +\frac{1}{2}\left(  X_{1}(t)^{\intercal},Y_{1}(t),\left\langle
K_{1}(t),X_{1}(t)\right\rangle \right)  D^{2}g(t)\left(  X_{1}(t)^{\intercal
},Y_{1}(t),\left\langle K_{1}(t),X_{1}(t)\right\rangle \right)  ^{\intercal
}\right\}  dt+Z_{2}(t)dB(t),\\
Y_{2}(T)= & \left\langle \phi_{x}(\bar{X}(T)),X_{2}(T)\right\rangle +\frac
{1}{2}\left\langle \phi_{xx}(\bar{X}(T))X_{1}(T),X_{1}(T)\right\rangle .
\end{array}
\right.  \label{new-form-y2-q-unbound}%
\end{equation}
The following second-order estimates hold.

\begin{lemma}
\label{est-second-order-q-unbound}Under the same Assumptions as in Lemma
\ref{est-one-order-q-unbound}, we have the following estimates%
\[%
\begin{array}
[c]{rl}%
\mathbb{E}\left[  \sup\limits_{t\in\lbrack0,T]}|X^{\epsilon}(t)-\bar
{X}(t)-X_{1}(t)-X_{2}(t)|^{2}\right]  & =o(\epsilon^{2}),\\
\mathbb{E}\left[  \sup\limits_{t\in\lbrack0,T]}|Y^{\epsilon}(t)-\bar
{Y}(t)-Y_{1}(t)-Y_{2}(t)|^{2}\right]  +\mathbb{E}\left[  \int_{0}%
^{T}|Z^{\epsilon}(t)-\bar{Z}(t)-Z_{1}(t)-Z_{2}(t)|^{2}dt\right]  &
=o(\epsilon^{2}).
\end{array}
\]

\end{lemma}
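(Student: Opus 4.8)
The plan is to mirror the proof of Lemma~\ref{lemma-est-sup}, but to replace every appeal to the boundedness of $q(\cdot)$ and to the generic estimate of Lemma~\ref{est-l} by the postulated $L^{\beta}$-bound of Assumption~\ref{assm-ex-unb}. First I would set up the third-order remainder. With $\xi^{3,\epsilon}:=X^{\epsilon}-\bar{X}-X_{1}-X_{2}$, $\eta^{3,\epsilon}:=Y^{\epsilon}-\bar{Y}-Y_{1}-Y_{2}$, $\zeta^{3,\epsilon}:=Z^{\epsilon}-\bar{Z}-Z_{1}-Z_{2}$, a Taylor expansion of (\ref{state-eq}) about $\Theta(t)$, combined with the first-order relations of Lemma~\ref{est-one-order-q-unbound} and the second-order equations (\ref{new-form-x2-q-unbound})--(\ref{new-form-y2-q-unbound}), shows that $(\xi^{3,\epsilon},\eta^{3,\epsilon},\zeta^{3,\epsilon})$ solves a linear FBSDE of the form (\ref{x-x1-x2})--(\ref{y-y1-y2}) with $\sigma_{z}(t)$ replaced by $A(t)$ and with source terms $A_{2}^{\epsilon},B_{2}^{\epsilon},C_{2}^{\epsilon},D_{2}^{\epsilon}$. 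The decisive structural simplification is that, under Assumption~\ref{assm-sig-small}, $\sigma_{zz}\equiv\sigma_{xz}\equiv\sigma_{yz}\equiv0$, so the diffusion residual reduces to
\[
B_{2}^{\epsilon}(t)=\left[  \delta\sigma_{x}(t)\xi^{2,\epsilon}(t)+\delta\sigma_{y}(t)\eta^{2,\epsilon}(t)\right]  I_{E_{\epsilon}}(t)+\tfrac{1}{2}\widetilde{D^{2}\sigma^{\epsilon}}(t)\left(  \xi^{1,\epsilon}(t)^{\intercal},\eta^{1,\epsilon}(t)\right)  ^{2}-\tfrac{1}{2}D^{2}\sigma(t)\left(  X_{1}(t)^{\intercal},Y_{1}(t)\right)  ^{2},
\]
which contains no factor of $\zeta^{2,\epsilon}$.

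Next I would apply the $L^{2}$-estimate of Assumption~\ref{assm-ex-unb} (whose constant depends only on the bounds of $\psi_{x},\psi_{y},\psi_{z}$ and on the bound for $|1-\langle p,A\rangle|^{-1}$ from Assumption~\ref{assm-p-q-unb}, hence applies verbatim to the frozen-coefficient system above) to obtain
\[
\mathbb{E}\left[  \sup\limits_{t\in\lbrack0,T]}\left(  |\xi^{3,\epsilon}(t)|^{2}+|\eta^{3,\epsilon}(t)|^{2}\right)  +\int_{0}^{T}|\zeta^{3,\epsilon}(t)|^{2}dt\right]  \leq C\mathbb{E}\left[  \left(  \int_{0}^{T}|A_{2}^{\epsilon}|dt\right)  ^{2}+\left(  \int_{0}^{T}|C_{2}^{\epsilon}|dt\right)  ^{2}+\int_{0}^{T}|B_{2}^{\epsilon}|^{2}dt+|D_{2}^{\epsilon}(T)|^{2}\right]  .
\]
The three residuals $A_{2}^{\epsilon}$, $C_{2}^{\epsilon}$ and $D_{2}^{\epsilon}(T)$ carry no factor of $q$, so the boundedness of $q$ is nowhere needed; each is estimated exactly as in Lemma~\ref{est-second-order}, by splitting off the $E_{\epsilon}$-supported part (of $L^{1}$-size $o(\epsilon)$) and a coefficient-mismatch part such as $(\widetilde{D^{2}b^{\epsilon}}-D^{2}b)(X_{1}^{\intercal},Y_{1},\langle K_{1},X_{1}\rangle)^{2}$, the latter controlled by the $L^{4}$-bound for $X_{1}$ coming from (\ref{est-x1-y1-q-unbound}) together with the dominated convergence $\widetilde{D^{2}\psi^{\epsilon}}\to D^{2}\psi$.

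The main obstacle, and the only place where Assumption~\ref{assm-sig-small} is essential, is the term $\mathbb{E}[\int_{0}^{T}|B_{2}^{\epsilon}|^{2}dt]$. The $E_{\epsilon}$-supported piece is dominated by $C\epsilon\,\mathbb{E}[\sup_{t}(|\xi^{2,\epsilon}(t)|^{2}+|\eta^{2,\epsilon}(t)|^{2})]$, which is $o(\epsilon^{2})$ because the $o(\epsilon^{2})$-bound on $\mathbb{E}[\sup_{t}(|\xi^{2,\epsilon}|^{4}+|\eta^{2,\epsilon}|^{4})]$ (the lemma following Lemma~\ref{est-one-order-q-unbound}) gives $\mathbb{E}[\sup_{t}(|\xi^{2,\epsilon}|^{2}+|\eta^{2,\epsilon}|^{2})]=o(\epsilon)$ by Cauchy--Schwarz. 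For the Hessian piece I would substitute $\xi^{1,\epsilon}=X_{1}+\xi^{2,\epsilon}$, $\eta^{1,\epsilon}=Y_{1}+\eta^{2,\epsilon}$ and expand the quadratic form, producing a mismatch term $(\widetilde{D^{2}\sigma^{\epsilon}}-D^{2}\sigma)(X_{1}^{\intercal},Y_{1})^{2}$ plus cross terms bounded pointwise by $C(|\xi^{2,\epsilon}|+|\eta^{2,\epsilon}|)(|X_{1}|+|Y_{1}|+|\xi^{1,\epsilon}|+|\eta^{1,\epsilon}|)$; a Cauchy--Schwarz split pairing the $o(\epsilon^{2})$-bound on $\mathbb{E}[\sup_{t}(|\xi^{2,\epsilon}|^{4}+|\eta^{2,\epsilon}|^{4})]$ against the $O(\epsilon^{2})$-bounds on $\mathbb{E}[\sup_{t}(|X_{1}|^{4}+|\xi^{1,\epsilon}|^{4})]$ (from (\ref{est-x1-y1-q-unbound}) and Lemma~\ref{est-epsilon-bar}), together with dominated convergence of $\widetilde{D^{2}\sigma^{\epsilon}}-D^{2}\sigma$, then gives $o(\epsilon^{2})$. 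Had $\sigma_{zz}$ been nonzero, the same expansion would instead generate $\int_{0}^{T}|\widetilde{\sigma}_{zz}^{\epsilon}(\zeta^{1,\epsilon}-\Delta I_{E_{\epsilon}})|^{2}|\zeta^{2,\epsilon}|^{2}dt$, which, as noted in the remark preceding Lemma~\ref{lemma-est-sup}, need not be $o(\epsilon^{2})$; the linear-in-$z$ hypothesis removes it entirely. Inserting the three $o(\epsilon^{2})$ bounds into the displayed inequality yields both asserted estimates, the $X$-estimate being the $\xi^{3,\epsilon}$-component.
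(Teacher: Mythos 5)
Your proposal is correct and follows essentially the same route as the paper: the same remainder system for $(\xi^{3,\epsilon},\eta^{3,\epsilon},\zeta^{3,\epsilon})$ with the same source terms, the a priori bound supplied by Assumption \ref{assm-ex-unb}, and the observation that linearity of $\sigma$ in $z$ removes the $\tilde{\sigma}_{zz}^{\epsilon}\,\zeta^{2,\epsilon}$ contribution to $B_{2}^{\epsilon}$ that obstructs the general case. The paper simply defers the term-by-term estimates to Lemma 3.24 of \cite{Hu-JX}, whereas you spell out the $B_{2}^{\epsilon}$ estimate explicitly; the only slight imprecision is your remark that $A_{2}^{\epsilon}$, $C_{2}^{\epsilon}$ carry no factor of $q$ (they do, through $K_{1}$ and $\zeta^{1,\epsilon}$), but since these are drift terms handled exactly as in the cited lemma this does not affect the argument.
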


\begin{proof}
We use the same notations $A_{2}^{\epsilon}(t)$ $C_{2}^{\epsilon}(t)$ and
$D_{2}^{\epsilon}(T)$ as in\ Lemma \ref{est-second-order}. The only different
term is
\[%
\begin{array}
[c]{ll}%
B_{2}^{\epsilon}(t)= & \delta\sigma_{x}(t)\xi^{2,\epsilon}(t)I_{E_{\epsilon}%
}(t)+\delta\sigma_{y}(t)\eta^{2,\epsilon}(t)I_{E_{\epsilon}}(t)+\frac{1}%
{2}\widetilde{D^{2}\sigma^{\epsilon}}(t)\left(  \xi^{1,\epsilon}%
(t)^{\intercal},\eta^{1,\epsilon}(t)\right)  ^{2}\\
& -\frac{1}{2}D^{2}\sigma(t)\left(  X_{1}(t)^{\intercal},Y_{1}(t)\right)
^{2}.
\end{array}
\]
Then, we have that
\begin{equation}
\left\{
\begin{array}
[c]{rl}%
d\xi^{3,\epsilon}(t)= & \left[  b_{x}(t)\xi^{3,\epsilon}(t)+b_{y}%
(t)\eta^{3,\epsilon}(t)+b_{z}(t)\zeta^{3,\epsilon}(t)+A_{2}^{\epsilon
}(t)\right]  dt\ \\
& +\left[  \sigma_{x}(t)\xi^{3,\epsilon}(t)+\sigma_{y}(t)\eta^{3,\epsilon
}(t)+A(t)\zeta^{3,\epsilon}(t)+B_{2}^{\epsilon}(t)\right]  dB(t),\\
\xi^{3,\epsilon}(0)= & 0,
\end{array}
\right.  \label{x-x1-x2-bz-0}%
\end{equation}
and%
\begin{equation}
\left\{
\begin{array}
[c]{ll}%
d\eta^{3,\epsilon}(t)= & -\left[  \left\langle g_{x}(t),\xi^{3,\epsilon
}(t)\right\rangle +g_{y}(t)\eta^{3,\epsilon}(t)+g_{z}(t)\zeta^{3,\epsilon
}(t)+C_{2}^{\epsilon}(t)\right]  dt+\zeta^{3,\epsilon}(t)dB(t),\\
\eta^{3,\epsilon}(T)= & \left\langle \phi_{x}(\bar{X}(T)),\xi^{3,\epsilon
}(T)\right\rangle +D_{2}^{\epsilon}(T).
\end{array}
\right.  \label{y-y1-y2-bz-0}%
\end{equation}
By Assumption \ref{assm-ex-unb},
\[%
\begin{array}
[c]{l}%
\mathbb{E}\left[  \sup\limits_{t\in\lbrack0,T]}\left(  |\xi^{3,\epsilon
}(t)|^{2}+|\eta^{3,\epsilon}(t)|^{2}\right)  +\int_{0}^{T}|\zeta^{3,\epsilon
}(t)|^{2}dt\right] \\
\leq\mathbb{E}\left[  \left(  \int_{0}^{T}|A_{2}^{\epsilon}(t)|dt\right)
^{2}+\left(  \int_{0}^{T}|C_{2}^{\epsilon}(t)|dt\right)  ^{2}+\int_{0}%
^{T}|B_{2}^{\epsilon}(t)|^{2}dt+|D_{2}^{\epsilon}(T)|^{2}\right]  .
\end{array}
\]
We can estimate term by term by the same steps in Lemma 3.24 in \cite{Hu-JX}.
Thus completes the proof.
\end{proof}

Now we introduce the second-order adjoint equation:%
\begin{equation}
\left\{
\begin{array}
[c]{l}%
-dP(t)\\
=\left\{  \left(  D\sigma(t)[I_{n\times n},p(t),K_{1}(t)]^{\intercal}\right)
^{\intercal}P(t)D\sigma(t)[I_{n\times n},p(t),K_{1}(t)]^{\intercal
}+P(t)Db(t)[I_{n\times n},p(t),K_{1}(t)]^{\intercal}\right. \\
\text{ \ }+\left(  Db(t)[I_{n\times n},p(t),K_{1}(t)]^{\intercal}\right)
^{\intercal}P(t)+P(t)H_{y}(t)+Q(t)D\sigma(t)[I_{n\times n},p(t),K_{1}%
(t)]^{\intercal}\\
\text{ \ }\left.  +\left(  D\sigma(t)[I_{n\times n},p(t),K_{1}(t)]^{\intercal
}\right)  ^{\intercal}Q(t)+\left[  I_{n\times n},p(t),K_{1}(t)\right]
D^{2}H(t)\left[  I_{n\times n},p(t),K_{1}(t)\right]  ^{\intercal}%
+H_{z}(t)K_{2}(t)\right\}  dt\\
\text{ \ }-Q(t)dB(t),\\
P(T)=\phi_{xx}(\bar{X}(T)),
\end{array}
\right.  \label{eq-P-sigmaz0}%
\end{equation}

where
\[%
\begin{array}
[c]{ll}%
H(t,x,y,z,u,p,q)= & g(t,x,y,z,u)+\left\langle p,b(t,x,y,z,u)\right\rangle
+\left\langle q,\sigma(t,x,y,z,u)\right\rangle ,
\end{array}
\]%
\[%
\begin{array}
[c]{rl}%
K_{2}(t)= & (1-\left\langle p(t),A(t)\right\rangle )^{-1}\left\{  \sigma
_{y}(t)p(t)^{\intercal}P(t)+\left(  \sigma_{x}(t)+\sigma_{y}(t)p(t)^{\intercal
}+A(t)K_{1}(t)^{\intercal}\right)  ^{\intercal}P(t)+Q(t)\right\} \\
& +(1-\left\langle p(t),A(t)\right\rangle )^{-1}\left\{  P(t)\left(
\sigma_{x}(t)+\sigma_{y}(t)p(t)^{\intercal}+A(t)K_{1}(t)^{\intercal}\right)
+p(t)D^{2}\sigma(t)\left(  I_{n\times n},p(t),K_{1}(t)\right)  ^{2}\right\}  .
\end{array}
\]

\noindent(\ref{eq-P-sigmaz0}) is a linear BSDE with non-Lipschitz coefficient
for $P(\cdot)$. Then, (\ref{eq-P-sigmaz0}) has a unique pair of solution
according to Theorem 5.21 in \cite{Pardoux-book}. By the same analysis as in
Lemma \ref{relation-y2}, we introduce the following auxiliary equation:
\begin{equation}%
\begin{array}
[c]{rl}%
\hat{Y}(t)= & \int_{t}^{T}\left\{  (H_{y}(s)+g_{z}(s)\left\langle \sigma
_{y}(s),p(s)\right\rangle (1-\left\langle p(s),A(t)\right\rangle )^{-1}%
)\hat{Y}(s)\right. \\
& \text{ \ }+\left(  H_{z}(s)+g_{z}(s)\left\langle \sigma_{z}%
(s),p(s)\right\rangle (1-\left\langle p(s),A(t)\right\rangle )^{-1}\right)
\hat{Z}(s)\\
& \text{ \ }\left.  +\left[  \delta H(s,\Delta)+\frac{1}{2}\delta
\sigma(s,\Delta)^{\intercal}P(s)\delta\sigma(s,\Delta)\right]  I_{E_{\epsilon
}}(s)\right\}  ds-\int_{t}^{T}\hat{Z}(s)dB(s),
\end{array}
\label{yhat-sigmaz0}%
\end{equation}
where $\delta H(s,\Delta):=\left\langle p(s),\delta b(s,\Delta)\right\rangle
+\left\langle q(s),\delta\sigma(s,\Delta)\right\rangle +\delta g(s,\Delta)$.
We obtain the following relationship.

\begin{lemma}
\label{relation-second-order-q-unbound} Suppose the same Assumptions as in
Lemma \ref{est-one-order-q-unbound} hold. Furthermore, we suppose the
following SDE
\[
\left\{
\begin{array}
[c]{rl}%
dX_{2}(t)= & \left\{  b_{x}(t)X_{2}(t)+b_{y}(t)\left(  \left\langle
p(t),X_{2}(t)\right\rangle +\frac{1}{2}X_{1}(t)^{\intercal}P(t)X_{1}%
(t)+\hat{Y}(t)\right)  +b_{z}(t)\left(  \mathbf{I(t)}+\hat{Z}(t)\right)
\right. \\
& \left.  +\delta b(t,\Delta)I_{E_{\epsilon}}(t)+\frac{1}{2}D^{2}b(t)\left(
X_{1}(t)^{\intercal},Y_{1}(t),\left\langle K_{1}(t),X_{1}(t)\right\rangle
\right)  ^{2}\right\}  dt\\
& +\left\{  \sigma_{x}(t)X_{2}(t)+\sigma_{y}(t)\left(  \left\langle
p(t),X_{2}(t)\right\rangle +\frac{1}{2}X_{1}(t)^{\intercal}P(t)X_{1}%
(t)+\hat{Y}(t)\right)  +A(t)\left(  \mathbf{I(t)}+\hat{Z}(t)\right)  \right.
\\
& \left.  +\left[  \delta\sigma_{x}(t)X_{1}(t)+\delta\sigma_{y}(t)Y_{1}%
(t)\right]  I_{E_{\epsilon}}(t)+\frac{1}{2}D^{2}\sigma_{1}(t)\left(
X_{1}(t)^{\intercal},Y_{1}(t)\right)  ^{2}\right\}  dB(t),\\
X_{2}(0)= & 0,
\end{array}
\right.
\]
has a unique solution $X_{2}(\cdot)\in L_{\mathcal{F}}^{2}(\Omega
;C([0,T],\mathbb{R}^{n}))$ and $\left\langle p(t),X_{2}(t)\right\rangle
+\frac{1}{2}X_{1}(t)^{\intercal}P(t)X_{1}(t)+\hat{Y}(t)\in L_{\mathcal{F}}%
^{2}(\Omega;C([0,T],\mathbb{R}))$, $\mathbf{I(t)}+\hat{Z}(t)\in L_{\mathcal{F}%
}^{2,2}([0,T];\mathbb{R})$, where $(\hat{Y}(\cdot),\hat{Z}(\cdot))$ is the
solution to (\ref{yhat-sigmaz0}) and
\begin{align*}
\mathbf{I(t)}  &  =\left\langle K_{1}(t),X_{2}(t)\right\rangle +\frac{1}%
{2}\left\langle K_{2}(t)X_{1}(t),X_{1}(t)\right\rangle +(1-\left\langle
p(t),A(t)\right\rangle )^{-1}\left\langle p(t),\sigma_{y}(t)\hat
{Y}(t)+A(t)\hat{Z}(t)\right\rangle \\
&  \text{ }\;+\left\langle P(t)\delta\sigma(t,\Delta),X_{1}(t)\right\rangle
I_{E_{\epsilon}}(t)+(1-\left\langle p(t),\sigma_{z}(t)\right\rangle
)^{-1}\left\langle p(t),\delta\sigma_{x}(t,\Delta)X_{1}(t)\right\rangle
I_{E_{\epsilon}}(t)\\
&  \;\ +(1-\left\langle p(t),\sigma_{z}(t)\right\rangle )^{-1}\left[
\left\langle p(t),\delta\sigma_{y}(t,\Delta)\left\langle p(t),X_{1}%
(t)\right\rangle \right\rangle \right]  I_{E_{\epsilon}}(t).
\end{align*}
Then the solution to FBSDE (\ref{new-form-x2-q-unbound}%
)-(\ref{new-form-y2-q-unbound}) has the following relationship
\[%
\begin{array}
[c]{rl}%
Y_{2}(t) & =\left\langle p(t),X_{2}(t)\right\rangle +\frac{1}{2}%
X_{1}(t)^{\intercal}P(t)X_{1}(t)+\hat{Y}(t),\\
Z_{2}(t) & =\mathbf{I(t)}+\hat{Z}(t).
\end{array}
\]

\end{lemma}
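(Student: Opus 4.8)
The plan is to verify that the triple $(X_2,Y_2,Z_2)$, where $X_2(\cdot)$ is the process supplied by the hypothesis and
$Y_2(t):=\langle p(t),X_2(t)\rangle+\frac12 X_1(t)^\intercal P(t)X_1(t)+\hat Y(t)$, $Z_2(t):=\mathbf{I}(t)+\hat Z(t)$, solves the forward-backward system (\ref{new-form-x2-q-unbound})--(\ref{new-form-y2-q-unbound}); uniqueness of that system (via Assumption \ref{assm-ex-unb} applied to the associated linear FBSDE) then forces the claimed identities. First I would observe that, by construction, substituting $Y_2$ and $Z_2$ into the forward equation (\ref{new-form-x2-q-unbound}) reproduces exactly the decoupled SDE that the hypothesis assumes to have the unique solution $X_2(\cdot)$; hence the forward dynamics hold automatically, and only the backward equation and its terminal condition remain to be checked.

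The core computation applies It\^{o}'s formula to each of $\langle p(t),X_2(t)\rangle$, $\frac12 X_1(t)^\intercal P(t)X_1(t)$ and $\hat Y(t)$, and adds the three. For $\langle p,X_2\rangle$ I would use the dynamics (\ref{eq-p-q-unb}) of $(p,q)$ and (\ref{new-form-x2-q-unbound}) of $X_2$; the It\^{o} cross term and the drift of $p$ combine with the first-order relations $Y_1=\langle p,X_1\rangle$ and $Z_1=\langle K_1,X_1\rangle+\Delta I_{E_\epsilon}$ from Lemma \ref{est-one-order-q-unbound}. For the quadratic term I would use the second-order adjoint equation (\ref{eq-P-sigmaz0}): the definitions of $D^2H$ and of $K_2(t)$ are tailored so that the resulting drift yields the Hessian term $\frac12(X_1^\intercal,Y_1,\langle K_1,X_1\rangle)D^2g(t)(X_1^\intercal,Y_1,\langle K_1,X_1\rangle)^\intercal$ in (\ref{new-form-y2-q-unbound}) together with the $\frac12\langle K_2(t)X_1,X_1\rangle$ contribution to $\mathbf{I}(t)$. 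The inhomogeneous part of $\hat Y$ from (\ref{yhat-sigmaz0}) supplies the spike terms $\langle q,\delta\sigma(t,\Delta)\rangle I_{E_\epsilon}$ and $\delta g(t,\Delta)I_{E_\epsilon}$, while the correction $\frac12\delta\sigma(t,\Delta)^\intercal P(t)\delta\sigma(t,\Delta)$ produces the remaining boundary contributions.

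The delicate point is the martingale coefficient of $\langle p,X_2\rangle+\frac12 X_1^\intercal P X_1+\hat Y$: since the diffusion of $X_2$ in (\ref{new-form-x2-q-unbound}) contains $A(t)Z_2(t)$, the $dB$-coefficient produced by $\langle p,dX_2\rangle$ contains the scalar $\langle p,A\rangle Z_2$, so the requirement that this coefficient equal $Z_2$ is an implicit linear relation in $Z_2$. Solving it introduces the factor $(1-\langle p(t),A(t)\rangle)^{-1}$, and matching the remaining pieces term by term is precisely what pins down the expression for $\mathbf{I}(t)$. Here the structural Assumption \ref{assm-sig-small}, $\sigma=A(t)z+\sigma_1$, is essential: it forces $\sigma_{zz}\equiv0$, so no squared $\zeta^{1,\epsilon}$-terms survive in the $dB$-coefficient; this is exactly the obstruction that blocked the bounded-$q$ argument and is what now makes the matching go through. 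I expect this algebraic identification of the diffusion coefficient to be the main obstacle, the remainder being bookkeeping.

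Finally I would check the terminal data: since $p(T)=\phi_x(\bar X(T))$, $P(T)=\phi_{xx}(\bar X(T))$ and $\hat Y(T)=0$, the candidate satisfies $Y_2(T)=\langle\phi_x(\bar X(T)),X_2(T)\rangle+\frac12\langle\phi_{xx}(\bar X(T))X_1(T),X_1(T)\rangle$, matching the terminal condition of (\ref{new-form-y2-q-unbound}). With drift, diffusion and terminal value all matched, the integrability requirements imposed in the statement on $X_2$, on $\langle p,X_2\rangle+\frac12 X_1^\intercal P X_1+\hat Y$ and on $\mathbf{I}+\hat Z$ place the constructed triple in the solution space, so uniqueness of (\ref{new-form-x2-q-unbound})--(\ref{new-form-y2-q-unbound}) delivers the two asserted relations.
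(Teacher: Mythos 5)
Your proposal is correct and follows essentially the same route as the paper, which simply invokes ``the techniques in Lemma \ref{lemma-y1}'' (i.e.\ the It\^{o}-formula ansatz-and-match argument of Lemma \ref{appen-th-linear-fbsde}): you verify the candidate triple solves (\ref{new-form-x2-q-unbound})--(\ref{new-form-y2-q-unbound}) by matching drift, diffusion (solving the implicit linear relation in $Z_2$ that produces the factor $(1-\langle p(t),A(t)\rangle)^{-1}$) and terminal data, then conclude by uniqueness. Your explicit identification of where Assumption \ref{assm-sig-small} ($\sigma_{zz}\equiv 0$) enters is a useful elaboration of what the paper leaves implicit, but it is not a different method.
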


\begin{proof}
Applying the techniques in Lemma \ref{lemma-y1}, we can deduce the above
relationship similarly.
\end{proof}

Combing the estimates in Lemma \ref{est-second-order-q-unbound} and the
relationship in Lemma \ref{relation-second-order-q-unbound}, we deduce that
\[
Y^{\epsilon}(0)-\bar{Y}(0)=Y_{1}(0)+Y_{2}(0)+o(\epsilon)=\hat{Y}%
(0)+o(\epsilon)\geq0.
\]
Define
\[%
\begin{array}
[c]{l}%
\mathcal{H}(t,x,y,z,u,p,q,P)\\
=\left\langle p,b(t,x,y,z+\Delta(t),u)\right\rangle +\left\langle
q,\sigma(t,x,y,z+\Delta(t),u)\right\rangle +g(t,x,y,z+\Delta(t),u)\\
\text{ }+\frac{1}{2}(\sigma(t,x,y,z+\Delta(t),u)-\sigma(t,\bar{X}(t),\bar
{Y}(t),\bar{Z}(t),\bar{u}(t)))^{\intercal}P(\sigma(t,x,y,z+\Delta
(t),u)-\sigma(t,\bar{X}(t),\bar{Y}(t),\bar{Z}(t),\bar{u}(t))).
\end{array}
\]
By the same analysis as in Theorem \ref{Th-MP}, we obtain the following
maximum principle.

\begin{theorem}
\label{th-mp-q-unboud}Under the same Assumptions as in Lemma
\ref{relation-second-order-q-unbound}. Let $\bar{u}(\cdot)\in\mathcal{U}[0,T]$
be optimal and $(\bar{X}(\cdot),\bar{Y}(\cdot),\bar{Z}(\cdot))$ be the
corresponding state processes of (\ref{state-eq}). Then the following
stochastic maximum principle holds:
\[
\mathcal{H}(t,\bar{X}(t),\bar{Y}(t),\bar{Z}(t),u,p(t),q(t),P(t))\geq
\mathcal{H}(t,\bar{X}(t),\bar{Y}(t),\bar{Z}(t),\bar{u}%
(t),p(t),q(t),P(t)),\ \ \ \forall u\in U\ a.e.,\ a.s..
\]

\end{theorem}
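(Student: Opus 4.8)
The plan is to reproduce the chain of reasoning that produced Theorem~\ref{Th-MP}, substituting the linear-in-$z$ structure of $\sigma$ (Assumption~\ref{assm-sig-small}) for the boundedness of $q$ used there. The analytic input is already assembled: combining the second-order estimate of Lemma~\ref{est-second-order-q-unbound} with the representation of Lemma~\ref{relation-second-order-q-unbound}, and using $X_1(0)=X_2(0)=0$, one gets $Y_1(0)=\langle p(0),X_1(0)\rangle=0$ and $Y_2(0)=\langle p(0),X_2(0)\rangle+\tfrac12 X_1(0)^{\intercal}P(0)X_1(0)+\hat{Y}(0)=\hat{Y}(0)$. Optimality of $\bar u(\cdot)$ then gives
\[
0\le J(u^{\epsilon}(\cdot))-J(\bar u(\cdot))=Y^{\epsilon}(0)-\bar Y(0)=\hat{Y}(0)+o(\epsilon),
\]
where $(\hat{Y}(\cdot),\hat{Z}(\cdot))$ solves the auxiliary linear BSDE~(\ref{yhat-sigmaz0}). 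It remains to turn $\hat{Y}(0)\ge -o(\epsilon)$ into a pointwise variational inequality.

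To evaluate $\hat{Y}(0)$, I would introduce the scalar linear SDE for $\gamma(\cdot)$ obtained from~(\ref{eq-gamma}) by replacing $\sigma_z(t)$ with $A(t)$, i.e.\ with drift coefficient $H_y(t)+(1-\langle p(t),A(t)\rangle)^{-1}g_z(t)\langle p(t),\sigma_y(t)\rangle$ and diffusion coefficient $H_z(t)+(1-\langle p(t),A(t)\rangle)^{-1}g_z(t)\langle p(t),A(t)\rangle$, and $\gamma(0)=1$. Being a scalar linear (Dol\'eans--Dade type) equation, $\gamma(\cdot)$ is strictly positive. These two coefficients are by construction exactly the coefficients of $\hat{Y}(s)$ and $\hat{Z}(s)$ in~(\ref{yhat-sigmaz0}), so applying It\^o's formula to $\gamma(t)\hat{Y}(t)$ produces a cancellation of the drift, leaving only the source term on $E_{\epsilon}$ plus a stochastic integral; taking expectations yields
\[
\hat{Y}(0)=\mathbb{E}\left\{\int_{0}^{T}\gamma(t)\left[\delta H(t,\Delta)+\tfrac12\delta\sigma(t,\Delta)^{\intercal}P(t)\delta\sigma(t,\Delta)\right]I_{E_{\epsilon}}(t)\,dt\right\}.
\]
By the same elementary computation as the one preceding Theorem~\ref{Th-MP}, the bracket equals $\mathcal{H}(t,\bar X(t),\bar Y(t),\bar Z(t),u(t),p(t),q(t),P(t))-\mathcal{H}(t,\bar X(t),\bar Y(t),\bar Z(t),\bar u(t),p(t),q(t),P(t))$.

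The final step is the usual spike-variation localization. Fix $v\in U$ and $A\in\mathcal{F}_s$, and choose $E_{\epsilon}=[s,s+\epsilon]$ together with $u(t)=vI_A+\bar u(t)I_{A^c}$; dividing $\hat{Y}(0)\ge -o(\epsilon)$ by $\epsilon$ and letting $\epsilon\downarrow 0$ at a Lebesgue point $s$ gives $\mathbb{E}\left[I_A\,\gamma(s)\left(\mathcal{H}(s,v)-\mathcal{H}(s,\bar u(s))\right)\right]\ge 0$, where $\mathcal{H}(s,v)$ abbreviates $\mathcal{H}(s,\bar X(s),\bar Y(s),\bar Z(s),v,p(s),q(s),P(s))$. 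Since $A\in\mathcal{F}_s$ is arbitrary and $\gamma(s)>0$ a.s., this forces $\mathcal{H}(s,v)\ge\mathcal{H}(s,\bar u(s))$ a.s.\ for a.e.\ $s$, and a separability argument over a countable dense subset of $U$ upgrades this to the asserted inequality for all $u\in U$, a.e., a.s.

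The main obstacle I anticipate lies entirely in justifying the It\^o/duality computation above when $q$ is only square-integrable rather than bounded. Both $\gamma(\cdot)$ (through $H_y,H_z$, which contain $\langle q,\sigma_y\rangle$ and $\langle q,A\rangle$) and the source term (through $\delta H$, which contains $\langle q,\delta\sigma\rangle$) involve $q$, so one must confirm that $\gamma\hat{Y}$ is a genuine martingale and that all the integrals are finite. This is exactly where the linear structure of $\sigma$ is indispensable: since $A(\cdot)$, $p(\cdot)$ and $\left|1-\langle p(\cdot),A(\cdot)\rangle\right|^{-1}$ are bounded by Assumptions~\ref{assm-sig-small} and~\ref{assm-p-q-unb}, the diffusion coefficient of $\gamma$ stays controlled, and the $L^{\beta}$-estimates of Assumption~\ref{assm-ex-unb}, combined with a stopping-time localization and the Burkholder--Davis--Gundy and H\"older inequalities, make the stochastic integral a true martingale and render the representation of $\hat{Y}(0)$ rigorous.
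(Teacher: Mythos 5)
Your proposal is correct and follows essentially the same route as the paper: the paper likewise reduces the claim to $Y^{\epsilon}(0)-\bar{Y}(0)=\hat{Y}(0)+o(\epsilon)\geq 0$ via Lemmas \ref{est-second-order-q-unbound} and \ref{relation-second-order-q-unbound}, represents $\hat{Y}(0)$ by applying It\^{o}'s formula to $\gamma(t)\hat{Y}(t)$ with the exponential process adapted to the linear-in-$z$ diffusion, identifies the source term with the difference of $\mathcal{H}$, and concludes from $\gamma(t)>0$ by the standard spike-variation localization. Your additional discussion of the martingale/integrability issue when $q$ is only square-integrable is a sensible elaboration of a point the paper leaves implicit.
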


\section{Appendix}

\subsection{$L^{\beta}$-estimate for FBSDE}

We introduce the following lemmas. Consider the controlled forward-backward
stochastic differential equation
\begin{equation}
\left\{
\begin{array}
[c]{rl}%
d\hat{X}(t)= & \left[  \alpha_{1}(t)\hat{X}(t)+\beta_{1}(t)\hat{Y}%
(t)+\gamma_{1}(t)\hat{Z}(t)+L_{1}(t)\right]  dt+\left[  \alpha_{2}(t)\hat
{X}(t)\right. \\
& \left.  +\beta_{2}(t)\hat{Y}(t)+\gamma_{2}(t)\hat{Z}(t)+L_{2}(t)\right]
dB(t),\\
d\hat{Y}(t)= & -\left[  \left\langle \alpha_{3}(t),\hat{X}(t)\right\rangle
+\beta_{3}(t)\hat{Y}(t)+\gamma_{3}(t)\hat{Z}(t)+L_{3}(t)\right]  dt+\hat
{Z}(t)dB(t),\\
\hat{X}(0)= & x_{0},\ \hat{Y}(T)=\left\langle \kappa,\hat{X}(T)\right\rangle
+\varsigma,
\end{array}
\right.  \label{appen-eq-xyz}%
\end{equation}
where $\alpha_{i}(\cdot)$, $\beta_{i}(\cdot)$, $\gamma_{i}(\cdot)$, $i=1,2,3$,
are bounded adapted processes, $\alpha_{1}(\cdot)$, $\ \alpha_{2}(\cdot
)\in\mathbb{R}^{n\times n}$, $\alpha_{3}(\cdot)$, $\beta_{1}(\cdot)$,
$\beta_{2}(\cdot)$, $\gamma_{1}(\cdot)$, $\gamma_{2}(\cdot)\in\mathbb{R}^{n}$,
$\beta_{3}(t)$, $\gamma_{3}(\cdot)\in\mathbb{R}$, $L_{1}(\cdot)\in
L_{\mathcal{F}}^{\beta}([0,T];\mathbb{R}^{n})$, $L_{3}(\cdot)\in
L_{\mathcal{F}}^{\beta}([0,T];\mathbb{R})$, $L_{2}(\cdot)\in L_{\mathcal{F}%
}^{2,\beta}([0,T];\mathbb{R}^{n})$, $\varsigma\in L_{\mathcal{F}_{T}}^{\beta
}(\Omega;\mathbb{R}^{n})$ for some $\beta\in\lbrack2,8]$, $\kappa\in
\mathbb{R}^{n}$ is a $\mathcal{F}_{T}$-measurable random variable. Suppose
that the solution to (\ref{appen-eq-xyz}) has the following relationship
\[
\hat{Y}(t)=\left\langle p(t),\hat{X}(t)\right\rangle +\varphi(t),
\]
where $p(t)$, $\varphi(t)$ satisfies
\begin{equation}
\left\{
\begin{array}
[c]{rl}%
dp(t)= & -A(t)dt+q(t)dB(t),\\
p(T)= & \kappa,
\end{array}
\right.  \label{appen-eq-pq}%
\end{equation}%
\begin{equation}
\left\{
\begin{array}
[c]{rl}%
d\varphi(t)= & -C(t)dt+\nu(t)dB(t),\\
\varphi(T)= & \varsigma,
\end{array}
\right.  \label{appen-phi}%
\end{equation}
$A(t)$ and $C(t)$ will be determined later. Applying It\^{o}'s formula to
$\left\langle p(t),\hat{X}(t)\right\rangle +\varphi(t)$, we have
\[%
\begin{array}
[c]{l}%
d\left(  \left\langle p(t),\hat{X}(t)\right\rangle +\varphi(t)\right) \\
=\left\{  \left\langle p(t),\alpha_{1}(t)\hat{X}(t)+\beta_{1}(t)\hat
{Y}(t)+\gamma_{1}(t)\hat{Z}(t)+L_{1}(t)\right\rangle -\left\langle
A(t),\hat{X}(t)\right\rangle \right. \\
\ \ \left.  +\left\langle q(t),\alpha_{2}(t)\hat{X}(t)+\beta_{2}(t)\hat
{Y}(t)+\gamma_{2}(t)\hat{Z}(t)+L_{2}(t)\right\rangle -C(t)\right\}  dt\\
\ \ +\left\{  \left\langle p(t),\alpha_{2}(t)\hat{X}(t)+\beta_{2}(t)\hat
{Y}(t)+\gamma_{2}(t)\hat{Z}(t)+L_{2}(t)\right\rangle +\left\langle
q(t),\hat{X}(t)\right\rangle +\nu(t)\right\}  dB(t).
\end{array}
\]
Comparing with the equation satisfied by $\hat{Y}(t)$, one has
\begin{equation}
\hat{Z}(t)=\left\langle p(t),\alpha_{2}(t)\hat{X}(t)+\beta_{2}(t)\hat
{Y}(t)+\gamma_{2}(t)\hat{Z}(t)+L_{2}(t)\right\rangle +\left\langle
q(t),\hat{X}(t)\right\rangle +\nu(t), \label{appen-eq-z}%
\end{equation}%
\begin{equation}%
\begin{array}
[c]{l}%
-\left[  \left\langle \alpha_{3}(t),\hat{X}(t)\right\rangle +\beta_{3}%
(t)\hat{Y}(t)+\gamma_{3}(t)\hat{Z}(t)+L_{3}(t)\right] \\
=\left\langle p(t),\alpha_{1}(t)\hat{X}(t)+\beta_{1}(t)\hat{Y}(t)+\gamma
_{1}(t)\hat{Z}(t)+L_{1}(t)\right\rangle -\left\langle A(t),\hat{X}%
(t)\right\rangle \\
\ \ +\left\langle q(t),\alpha_{2}(t)\hat{X}(t)+\beta_{2}(t)\hat{Y}%
(t)+\gamma_{2}(t)\hat{Z}(t)+L_{2}(t)\right\rangle -C(t).
\end{array}
\label{appen-relation-generator}%
\end{equation}
From equation (\ref{appen-eq-z}), we have the form of $\hat{Z}(t)$ as
\[%
\begin{array}
[c]{rl}%
\hat{Z}(t)= & \left(  1-\left\langle p(t),\gamma_{2}(t)\right\rangle \right)
^{-1}\left[  \left\langle p(t),\alpha_{2}(t)\hat{X}(t)+\beta_{2}(t)\hat
{Y}(t)+L_{2}(t)\right\rangle +\left\langle q(t),\hat{X}(t)\right\rangle
+\nu(t)\right] \\
= & \left(  1-\left\langle p(t),\gamma_{2}(t)\right\rangle \right)
^{-1}\left[  \left\langle \alpha_{2}(t)^{\intercal}p(t)+\left\langle
p(t),\beta_{2}(t)\right\rangle p(t)+q(t),\hat{X}(t)\right\rangle \right. \\
& +\left\langle p(t),\beta_{2}(t)\right\rangle \varphi(t)+\left\langle
p(t),L_{2}(t)\right\rangle +\nu(t)].
\end{array}
\]
From the equation (\ref{appen-relation-generator}), and utilizing the form of
$\hat{Y}(t)$ and $\hat{Z}(t)$, we derive that%
\begin{equation}%
\begin{array}
[c]{rl}%
A(t)= & \alpha_{3}(t)+\beta_{3}(t)p(t)+\gamma_{3}(t)K_{1}(t)+\alpha
_{1}(t)^{\intercal}p(t)+\left\langle p(t),\beta_{1}(t)\right\rangle p(t)\\
& +\left\langle p(t),\gamma_{1}(t)\right\rangle K_{1}(t)+\alpha_{2}%
(t)^{\intercal}q(t)+\left\langle q(t),\beta_{2}(t)\right\rangle
p(t)+\left\langle q(t),\gamma_{2}(t)\right\rangle K_{1}(t),
\end{array}
\label{new-eq-111}%
\end{equation}
where
\[
K_{1}(t)=\left(  1-\left\langle p(t),\gamma_{2}(t)\right\rangle \right)
^{-1}\left[  \alpha_{2}(t)^{\intercal}p(t)+\left\langle p(t),\beta
_{2}(t)\right\rangle p(t)+q(t)\right]  ,
\]
and%
\begin{equation}%
\begin{array}
[c]{rl}%
C(t)= & \left[  \beta_{3}(t)\varphi(t)+\gamma_{3}(t)\left(  1-\left\langle
p(t),\gamma_{2}(t)\right\rangle \right)  ^{-1}\left[  \left\langle
p(t),\beta_{2}(t)\right\rangle \varphi(t)+\left\langle p(t),L_{2}%
(t)\right\rangle +\nu(t)\right]  +L_{3}(t)\right] \\
& +\left\langle p(t),\beta_{1}(t)\varphi(t)+\gamma_{1}(t)\left(
1-\left\langle p(t),\gamma_{2}(t)\right\rangle \right)  ^{-1}\left[
\left\langle p(t),\beta_{2}(t)\right\rangle \varphi(t)+\left\langle
p(t),L_{2}(t)\right\rangle +\nu(t)\right]  +L_{1}(t)\right\rangle \\
& +\left\langle q(t),\beta_{2}(t)\varphi(t)+\gamma_{2}(t)\left(
1-\left\langle p(t),\gamma_{2}(t)\right\rangle \right)  ^{-1}\left[
\left\langle p(t),\beta_{2}(t)\right\rangle \varphi(t)+\left\langle
p(t),L_{2}(t)\right\rangle +\nu(t)\right]  +L_{2}(t)\right\rangle .
\end{array}
\label{new-eq-112}%
\end{equation}

\begin{lemma}
\label{appen-th-linear-fbsde}Assume (\ref{appen-eq-pq}) has a unique solution
$(p(\cdot),q(\cdot))\in L_{\mathcal{F}}^{\infty}(\Omega;C([0,T],\mathbb{R}%
^{n}))\times L_{\mathcal{F}}^{\infty}([0,T];\mathbb{R}^{n})$ such that
$\left\vert 1-\left\langle p(t),\gamma_{2}(t)\right\rangle \right\vert ^{-1}$
is bounded. Then
\end{lemma}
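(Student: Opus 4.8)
The plan is to establish the asserted decoupling by the verification method: treating the two backward equations (\ref{appen-eq-pq}) and (\ref{appen-phi}) as the building blocks, I would synthesize from them the unique solution of the fully coupled system (\ref{appen-eq-xyz}) and read off the relations $\hat{Y}(t)=\langle p(t),\hat{X}(t)\rangle+\varphi(t)$ and $\hat{Z}(t)=\langle K_1(t),\hat{X}(t)\rangle+(1-\langle p(t),\gamma_2(t)\rangle)^{-1}[\langle p(t),\beta_2(t)\rangle\varphi(t)+\langle p(t),L_2(t)\rangle+\nu(t)]$. First I would record that, by hypothesis, (\ref{appen-eq-pq}) with $A(\cdot)$ given by (\ref{new-eq-111}) has a unique solution $(p,q)$ with $p$, $q$ bounded and $|1-\langle p(t),\gamma_2(t)\rangle|^{-1}$ bounded; hence $K_1(\cdot)$ defined right after (\ref{new-eq-111}) is itself a bounded adapted process. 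This single boundedness fact is what renders every coefficient appearing below uniformly bounded.

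Next I would solve the auxiliary equation (\ref{appen-phi}). Substituting the explicit $C(t)$ from (\ref{new-eq-112}) shows that (\ref{appen-phi}) is a \emph{linear} BSDE whose driver is affine in $(\varphi,\nu)$ with bounded coefficients --- assembled from $\beta_i$, $\gamma_i$, $p$, $q$ and $(1-\langle p,\gamma_2\rangle)^{-1}$ --- and whose inhomogeneous part is built from $L_1,L_2,L_3$ and $\varsigma$. Standard BSDE theory then furnishes a unique solution $(\varphi,\nu)$ in the pertinent $L^\beta$ spaces, together with the estimate bounding $(\varphi,\nu)$ by $\varsigma$, $L_1$, $L_2$, $L_3$.

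Then I would construct $\hat{X}$. Inserting the ansatz $\hat{Y}=\langle p,\hat{X}\rangle+\varphi$ and the expression for $\hat{Z}$ read off from (\ref{appen-eq-z}) into the forward equation of (\ref{appen-eq-xyz}) produces a closed linear SDE for $\hat{X}$ alone, with bounded drift and diffusion coefficients and $L^\beta$ forcing terms; this has a unique solution $\hat{X}\in L_{\mathcal{F}}^{\beta}(\Omega;C([0,T],\mathbb{R}^{n}))$. Defining $\hat{Y}$ and $\hat{Z}$ by the two displayed formulas and applying It\^{o}'s formula to $\langle p,\hat{X}\rangle+\varphi$, the generator-matching identities (\ref{appen-eq-z}) and (\ref{appen-relation-generator}) --- which are precisely what force the choices (\ref{new-eq-111}) and (\ref{new-eq-112}) --- guarantee that $(\hat{X},\hat{Y},\hat{Z})$ obeys the backward dynamics and the terminal condition $\hat{Y}(T)=\langle\kappa,\hat{X}(T)\rangle+\varsigma$, so it solves (\ref{appen-eq-xyz}). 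The corresponding $L^\beta$-estimate for $(\hat{X},\hat{Y},\hat{Z})$ then follows by combining the standard SDE moment bound for $\hat{X}$ with the BSDE estimate for $(\varphi,\nu)$.

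For uniqueness, given any solution $(\hat{X},\hat{Y},\hat{Z})$ of (\ref{appen-eq-xyz}) I would set $\varphi(t):=\hat{Y}(t)-\langle p(t),\hat{X}(t)\rangle$; running the same It\^{o} computation shows $(\varphi,\nu)$ solves (\ref{appen-phi}), so by uniqueness there it agrees with the pair already built, whence $\hat{X}$ satisfies the closed SDE above and is thereby pinned down, delivering the two claimed relations. The main obstacle I anticipate is not the algebra --- the comparison of drivers is already carried out above --- but the integrability bookkeeping: one must verify that $(1-\langle p,\gamma_2\rangle)^{-1}$ and $K_1$ are genuinely bounded so that $\hat{Z}$ remains in $L_{\mathcal{F}}^{2,\beta}([0,T];\mathbb{R})$ and the closed SDE for $\hat{X}$ has bounded coefficients. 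This is exactly the point at which the hypothesis that $|1-\langle p(t),\gamma_2(t)\rangle|^{-1}$ be bounded is indispensable.
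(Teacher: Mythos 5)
Your proposal is correct and follows essentially the same route as the paper: the paper's one-line proof ("apply It\^{o}'s formula") refers exactly to the verification computation you describe, whose algebra (the generator-matching identities (\ref{appen-eq-z})--(\ref{appen-relation-generator}) forcing the choices (\ref{new-eq-111})--(\ref{new-eq-112})) is already laid out in the text preceding the lemma, so your writeup just makes the construction of $(\varphi,\nu)$, the closed SDE for $\hat{X}$, and the boundedness of $K_{1}$ explicit. The only remark is that your final uniqueness paragraph proves more than the lemma asserts --- the statement only claims \emph{existence} of a solution to (\ref{appen-eq-xyz}) of the displayed form (uniqueness of the FBSDE solution is taken as a separate hypothesis in Lemma \ref{est-l}) --- but this extra step does no harm.
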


(i) BSDE (\ref{appen-phi}) has a unique solution in $L_{\mathcal{F}}^{\beta
}(\Omega;C([0,T],\mathbb{R}))\times L_{\mathcal{F}}^{2,\beta}([0,T];\mathbb{R}%
)$;

(ii) FBSDE (\ref{appen-eq-xyz}) has a solution $(\tilde{X}(\cdot),\tilde
{Y}(\cdot),\tilde{Z}(\cdot))\in L_{\mathcal{F}}^{\beta}(\Omega
;C([0,T],\mathbb{R}^{n}))\times L_{\mathcal{F}}^{\beta}(\Omega
;C([0,T],\mathbb{R}))\times L_{\mathcal{F}}^{2,\beta}([0,T];\mathbb{R})$,
where $\tilde{X}(\cdot)$ is the solution to
\begin{equation}
\left\{
\begin{array}
[c]{rl}%
d\tilde{X}(t)= & \left\{  \alpha_{1}(t)\tilde{X}(t)+\beta_{1}(t)\left\langle
p(t),\tilde{X}(t)\right\rangle +\gamma_{1}(t)\left\langle K_{1}\left(
t\right)  ,\tilde{X}(t)\right\rangle +\beta_{1}(t)\varphi(t)+L_{1}(t)\right.
\\
& \left.  +\gamma_{1}(t)\left(  1-\left\langle p(t),\gamma_{2}(t)\right\rangle
\right)  ^{-1}\left[  \left\langle p(t),\beta_{2}(t)\right\rangle
\varphi(t)+\left\langle p(t),L_{2}(t)\right\rangle +\nu(t)\right]  \right\}
dt\\
& +\left\{  \alpha_{2}(t)\tilde{X}(t)+\beta_{2}(t)\left\langle p(t),\tilde
{X}(t)\right\rangle +\gamma_{2}(t)\left\langle K_{1}\left(  t\right)
,\tilde{X}(t)\right\rangle +\beta_{2}(t)\varphi(t)+L_{2}(t)\right. \\
& \left.  +\gamma_{2}(t)\left(  1-\left\langle p(t),\gamma_{2}(t)\right\rangle
\right)  ^{-1}\left[  \left\langle p(t),\beta_{2}(t)\right\rangle
\varphi(t)+\left\langle p(t),L_{2}(t)\right\rangle +\nu(t)\right]  \right\}
dB(t),\\
\tilde{X}(0)= & x_{0},
\end{array}
\right.  \label{appen-eq-x}%
\end{equation}
and%
\begin{equation}%
\begin{array}
[c]{rl}%
\tilde{Y}(t)= & \left\langle p(t),\tilde{X}(t)\right\rangle +\varphi(t),\\
\tilde{Z}(t)= & \left\langle K_{1}\left(  t\right)  ,\tilde{X}(t)\right\rangle
+\left(  1-\left\langle p(t),\gamma_{2}(t)\right\rangle \right)  ^{-1}\left[
\left\langle p(t),\beta_{2}(t)\right\rangle \varphi(t)+\left\langle
p(t),L_{2}(t)\right\rangle +\nu(t)\right]  .
\end{array}
\label{appen-rel}%
\end{equation}

\begin{proof}
The result can be obtained by applying It\^{o}'s formula.
\end{proof}

According to above Lemma, we have the following result which describes the
estimate of the solution $\left(  \hat{X}\left(  \cdot\right)  ,\hat{Y}\left(
\cdot\right)  ,\hat{Z}\left(  \cdot\right)  \right)  $. Before that, we need
impose the following assumption.

\begin{lemma}
\label{est-l} Suppose that the same assumptions in Lemma
\ref{appen-th-linear-fbsde} hold. Furthermore, suppose the FBSDE
(\ref{appen-eq-xyz}) has a unique solution in $L_{\mathcal{F}}^{\beta}%
(\Omega;C([0,T],\mathbb{R}^{n}))\times L_{\mathcal{F}}^{\beta}(\Omega
;C([0,T],\mathbb{R}))\times L_{\mathcal{F}}^{2,\beta}([0,T];\mathbb{R})$.
Then
\[%
\begin{array}
[c]{l}%
\mathbb{E}\left[  \sup\limits_{t\in\lbrack0,T]}\left(  |\hat{X}(t)|^{\beta
}+|\hat{Y}(t)|^{\beta}\right)  \right]  +\mathbb{E}\left[  \left(  \int%
_{0}^{T}|\hat{Z}\left(  t\right)  |^{2}dt\right)  ^{\frac{\beta}{2}}\right] \\
\leq C\mathbb{E}\left[  \left\vert x_{0}\right\vert ^{\beta}+\left\vert
\varsigma\right\vert ^{\beta}+\left(  \int_{0}^{T}\left(  \left\vert
L_{1}\left(  t\right)  |+|L_{3}(t)\right\vert \right)  dt\right)  ^{\beta
}+\left(  \int_{0}^{T}\left\vert L_{2}\left(  t\right)  \right\vert
^{2}dt\right)  ^{\frac{\beta}{2}}\right]  .
\end{array}
\]

\end{lemma}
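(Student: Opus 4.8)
The plan is to exploit the decoupling provided by Lemma \ref{appen-th-linear-fbsde}, which reduces the fully coupled system (\ref{appen-eq-xyz}) to a decoupled backward equation for $(\varphi,\nu)$ together with a decoupled forward equation for $\tilde{X}$. Since (\ref{appen-eq-xyz}) is assumed to have a unique solution, that solution must coincide with the one constructed in Lemma \ref{appen-th-linear-fbsde}, so it suffices to estimate the representation (\ref{appen-rel}). The key structural fact I would record first is that, under the standing hypothesis that $p(\cdot)$ and $q(\cdot)$ are bounded and $|1-\langle p(t),\gamma_{2}(t)\rangle|^{-1}$ is bounded, the gain process $K_{1}(\cdot)$ is bounded, and every coefficient multiplying $\varphi$, $\nu$ or $\tilde{X}$ in (\ref{appen-eq-x}) and in the generator $C(t)$ of (\ref{new-eq-112}) is a bounded adapted process.

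First I would estimate $(\varphi,\nu)$. By (\ref{appen-phi}) and (\ref{new-eq-112}), the pair $(\varphi,\nu)$ solves a linear BSDE whose generator is affine in $(\varphi,\nu)$ with bounded coefficients and whose free terms are bounded linear combinations of $L_{1}$, $L_{2}$, $L_{3}$; the boundedness of the coefficient of $\nu$ is exactly what the boundedness of $p$, $q$, $\gamma_{i}$ and $(1-\langle p,\gamma_{2}\rangle)^{-1}$ guarantees. The standard $L^{\beta}$-estimate for linear BSDEs then yields
\[
\mathbb{E}\left[\sup_{t\in[0,T]}|\varphi(t)|^{\beta}+\left(\int_{0}^{T}|\nu(t)|^{2}dt\right)^{\frac{\beta}{2}}\right]\leq C\,\mathbb{E}\left[|\varsigma|^{\beta}+\left(\int_{0}^{T}(|L_{1}(t)|+|L_{3}(t)|)dt\right)^{\beta}+\left(\int_{0}^{T}|L_{2}(t)|^{2}dt\right)^{\frac{\beta}{2}}\right].
\]

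Next I would estimate $\tilde{X}$. Equation (\ref{appen-eq-x}) is a linear SDE for $\tilde{X}$ with bounded drift and diffusion coefficients, forced by terms that are bounded affine combinations of $\varphi$, $\nu$, $L_{1}$ and $L_{2}$. Applying the Burkholder--Davis--Gundy inequality together with Gronwall's lemma in the $L^{\beta}$ setting bounds $\mathbb{E}[\sup_{t}|\tilde{X}(t)|^{\beta}]$ by $|x_{0}|^{\beta}$ plus the free-term norms, and feeding in the estimate from the previous step controls everything by the right-hand side of the asserted inequality. Finally, the representation (\ref{appen-rel}), together with the boundedness of $p$ and $K_{1}$, transfers these bounds to $\hat{Y}=\langle p,\tilde{X}\rangle+\varphi$ and to $\hat{Z}=\langle K_{1},\tilde{X}\rangle+(1-\langle p,\gamma_{2}\rangle)^{-1}[\langle p,\beta_{2}\rangle\varphi+\langle p,L_{2}\rangle+\nu]$, where the extra $\langle p,L_{2}\rangle$ contribution is absorbed into the $(\int_{0}^{T}|L_{2}|^{2}dt)^{\beta/2}$ term. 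Collecting the three steps gives the claim.

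The main obstacle is bookkeeping rather than conceptual: one must verify that the coefficient of the martingale integrand $\nu$ in the generator $C(t)$ is genuinely bounded so that the linear BSDE $L^{\beta}$-theory applies, and that the $\langle p,L_{2}\rangle$ and $\nu$ contributions appearing in $\hat{Z}$ are controlled in the $(\int_{0}^{T}|L_{2}|^{2}dt)^{\beta/2}$ norm rather than a larger one. Both points hinge entirely on the uniform boundedness of $p$, $q$ and $(1-\langle p,\gamma_{2}\rangle)^{-1}$ assumed in Lemma \ref{appen-th-linear-fbsde}.
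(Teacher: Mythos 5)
Your proposal is correct and follows essentially the same route as the paper: first apply the standard $L^{\beta}$-estimate to the linear BSDE for $(\varphi,\nu)$ with bounded coefficients (boundedness coming from $p$, $q$ and $(1-\langle p,\gamma_{2}\rangle)^{-1}$), then estimate $\tilde{X}$ from the decoupled forward SDE (\ref{appen-eq-x}), and finally transfer the bounds to $\hat{Y}$ and $\hat{Z}$ via the relations (\ref{appen-rel}), using uniqueness to identify the given solution with the constructed one. No substantive differences from the paper's argument.
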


\begin{proof}
The equation satisfied by $\left(  \varphi\left(  \cdot\right)  ,\nu\left(
\cdot\right)  \right)  $ is a linear BSDE with bounded coefficients. By
standard estimate of BSDE, we have%
\[%
\begin{array}
[c]{l}%
\mathbb{E}\left[  \sup\limits_{t\in\lbrack0,T]}|\varphi(t)|^{\beta}\right]
+\mathbb{E}\left[  \left(  \int_{0}^{T}|\nu\left(  t\right)  |^{2}dt\right)
^{\frac{\beta}{2}}\right] \\
\leq C\mathbb{E}\left[  \left\vert \varsigma\right\vert ^{\beta}+\left(
\int_{0}^{T}\left(  \left\vert L_{1}\left(  t\right)  |+|L_{3}(t)\right\vert
\right)  dt\right)  ^{\beta}+\left(  \int_{0}^{T}\left\vert L_{2}\left(
t\right)  \right\vert ^{2}dt\right)  ^{\frac{\beta}{2}}\right]  .
\end{array}
\]
From the result of above Lemma and the relation between $\left(  \hat
{Y}\left(  \cdot\right)  ,\hat{Z}\left(  \cdot\right)  \right)  $ and $\hat
{X}\left(  \cdot\right)  $, we obtain the estimate of $\hat{X}\left(
\cdot\right)  $ as follows
\[%
\begin{array}
[c]{l}%
\mathbb{E}\left[  \sup\limits_{t\in\lbrack0,T]}|\hat{X}(t)|^{\beta}\right] \\
\leq C\mathbb{E}\left[  \left\vert x_{0}\right\vert ^{\beta}+\left(  \int%
_{0}^{T}\left(  \left\vert L_{1}\left(  t\right)  |+|L_{2}\left(  t\right)
|+|\varphi\left(  t\right)  |+|\nu\left(  t\right)  \right\vert \right)
dt\right)  ^{\beta}+\left(  \int_{0}^{T}\left(  \left\vert L_{2}\left(
t\right)  |^{2}+|\varphi\left(  t\right)  |^{2}+|v\left(  t\right)
\right\vert ^{2}\right)  dt\right)  ^{\frac{\beta}{2}}\right] \\
\leq C\mathbb{E}\left[  \left\vert x_{0}\right\vert ^{\beta}+\left(  \int%
_{0}^{T}\left\vert L_{1}\left(  t\right)  \right\vert dt\right)  ^{\beta}%
+\sup\limits_{t\in\lbrack0,T]}|\varphi(t)|^{\beta}+\left(  \int_{0}^{T}\left(
\left\vert L_{2}\left(  t\right)  \right\vert ^{2}+\left\vert \nu\left(
t\right)  \right\vert ^{2}\right)  dt\right)  ^{\frac{\beta}{2}}\right] \\
\leq C\mathbb{E}\left[  \left\vert x_{0}\right\vert ^{\beta}+\left\vert
\varsigma\right\vert ^{\beta}+\left(  \int_{0}^{T}\left(  \left\vert
L_{1}\left(  t\right)  |+|L_{3}(t)\right\vert \right)  dt\right)  ^{\beta
}+\left(  \int_{0}^{T}\left\vert L_{2}\left(  t\right)  \right\vert
^{2}dt\right)  ^{\frac{\beta}{2}}\right]  .
\end{array}
\]
Since the relation (\ref{appen-rel}), we can obtain
\[
\mathbb{E}\left[  \sup\limits_{t\in\lbrack0,T]}|\hat{Y}(t)|^{\beta}\right]
\leq C\mathbb{E}\left[  \left\vert x_{0}\right\vert ^{\beta}+\left\vert
\varsigma\right\vert ^{\beta}+\left(  \int_{0}^{T}\left(  \left\vert
L_{1}\left(  t\right)  |+|L_{3}(t)\right\vert \right)  dt\right)  ^{\beta
}+\left(  \int_{0}^{T}\left\vert L_{2}\left(  t\right)  \right\vert
^{2}dt\right)  ^{\frac{\beta}{2}}\right]  ,
\]%
\[%
\begin{array}
[c]{rl}%
\mathbb{E}\left[  \left(  \int_{0}^{T}|\hat{Z}\left(  t\right)  |^{2}%
dt\right)  ^{\frac{\beta}{2}}\right]  & \leq C\mathbb{E}\left[  \left(
\int_{0}^{T}\left(  \left\vert \hat{X}\left(  t\right)  |^{2}+|L_{2}\left(
t\right)  |^{2}+|\varphi\left(  t\right)  |^{2}+|\nu\left(  t\right)
\right\vert ^{2}\right)  dt\right)  ^{\frac{\beta}{2}}\right] \\
& \leq C\mathbb{E}\left[  \sup\limits_{t\in\lbrack0,T]}\left(  \left\vert
\hat{X}\left(  t\right)  \right\vert ^{\beta}+|\varphi(t)|^{\beta}\right)
+\left(  \int_{0}^{T}\left(  \left\vert L_{2}\left(  t\right)  \right\vert
^{2}+\left\vert \nu\left(  t\right)  \right\vert ^{2}\right)  dt\right)
^{\frac{\beta}{2}}\right] \\
& \leq C\mathbb{E}\left[  \left\vert x_{0}\right\vert ^{\beta}+\left\vert
\varsigma\right\vert ^{\beta}+\left(  \int_{0}^{T}\left(  \left\vert
L_{1}\left(  t\right)  |+|L_{3}(t)\right\vert \right)  dt\right)  ^{\beta
}+\left(  \int_{0}^{T}\left\vert L_{2}\left(  t\right)  \right\vert
^{2}dt\right)  ^{\frac{\beta}{2}}\right]  .
\end{array}
\]
This completes the proof.
\end{proof}

\subsection{FBSDE with non-Lipschitz coefficients}

\begin{lemma}
\label{appen-th-linear-fbsde-unb}Suppose BSDE (\ref{appen-eq-pq}) has a unique
solution $(p(\cdot),q(\cdot))\in L_{\mathcal{F}}^{\infty}(\Omega
;C([0,T],\mathbb{R}^{n}))\times L_{\mathcal{F}}^{2,2}([0,T];\mathbb{R}^{n})$
such that $\left\vert 1-\left\langle p(t),\gamma_{2}(t)\right\rangle
\right\vert ^{-1}$ is bounded. Let
\begin{equation}
\left\{
\begin{array}
[c]{rl}%
d\tilde{X}(t)= & \left\{  \alpha_{1}(t)\tilde{X}(t)+\beta_{1}(t)\left\langle
p(t),\tilde{X}(t)\right\rangle +\gamma_{1}(t)\left\langle K_{1}\left(
t\right)  ,\tilde{X}(t)\right\rangle +L_{1}(t)\right. \\
& \left.  +\gamma_{1}(t)\left(  1-\left\langle p(t),\gamma_{2}(t)\right\rangle
\right)  ^{-1}\left\langle p(t),L_{2}(t)\right\rangle \right\}  dt\\
& +\left\{  \alpha_{2}(t)\tilde{X}(t)+\beta_{2}(t)\left\langle p(t),\tilde
{X}(t)\right\rangle +\gamma_{2}(t)\left\langle K_{1}\left(  t\right)
,\tilde{X}(t)\right\rangle +L_{2}(t)\right. \\
& \left.  +\gamma_{2}(t)\left(  1-\left\langle p(t),\gamma_{2}(t)\right\rangle
\right)  ^{-1}\left\langle p(t),L_{2}(t)\right\rangle \right\}  dB(t),\text{
}t\in\left[  0,T\right]  ,\\
\tilde{X}(0)= & x_{0}.
\end{array}
\right.  \label{appen-eq-x-unb}%
\end{equation}
Assume $\tilde{X}(\cdot)\in L_{\mathcal{F}}^{4}(\Omega;C([0,T],\mathbb{R}%
^{n}))$ and
\[
p(t)L_{1}(t)+q(t)L_{2}(t)+L_{3}(t)+(\gamma_{1}(t)p(t)+\gamma_{2}%
(t)q(t)+\gamma_{3}(t))(1-p(t)\gamma_{2}(t))^{-1}p(t)L_{2}(t)=0.
\]
Then $(\tilde{X}(\cdot),\tilde{Y}(\cdot),\tilde{Z}(\cdot))\in L_{\mathcal{F}%
}^{2}(\Omega;C([0,T],\mathbb{R}^{n}))\times L_{\mathcal{F}}^{2}(\Omega
;C([0,T],\mathbb{R}))\times L_{\mathcal{F}}^{2,2}([0,T];\mathbb{R})$ is the
unique solution to FBSDE (\ref{appen-eq-xyz}), where%
\begin{equation}%
\begin{array}
[c]{rl}%
\tilde{Y}(t)= & \left\langle p(t),\tilde{X}(t)\right\rangle ,\\
\tilde{Z}(t)= & \left\langle K_{1}\left(  t\right)  ,\tilde{X}(t)\right\rangle
+\left(  1-\left\langle p(t),\gamma_{2}(t)\right\rangle \right)
^{-1}\left\langle p(t),L_{2}(t)\right\rangle .
\end{array}
\label{appen-rel-unb}%
\end{equation}

\end{lemma}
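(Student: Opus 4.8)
The plan is to proceed exactly as in the proof of Lemma~\ref{appen-th-linear-fbsde}: guess the decoupling relation $\tilde Y(t)=\langle p(t),\tilde X(t)\rangle$ together with $\tilde Z$ as in (\ref{appen-rel-unb}), and confirm it by It\^{o}'s formula. The genuinely new point, forced by $q$ being only in $L_{\mathcal F}^{2,2}$ rather than bounded, is to make every stochastic integral and every integrability claim rigorous. First I would observe that the compatibility hypothesis $p(t)L_1(t)+q(t)L_2(t)+L_3(t)+(\gamma_1(t)p(t)+\gamma_2(t)q(t)+\gamma_3(t))(1-p(t)\gamma_2(t))^{-1}p(t)L_2(t)=0$ is precisely the statement $C(t)\equiv 0$ read off from (\ref{new-eq-112}) after setting $\varphi\equiv 0,\ \nu\equiv 0$. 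Consequently, for $\varsigma=0$ the pair $(\varphi,\nu)$ in (\ref{appen-phi}) vanishes identically, which is why the relation (\ref{appen-rel-unb}) carries no $\varphi$-correction and the decoupled equation for $\tilde X$ reduces to (\ref{appen-eq-x-unb}).

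Next I would dispose of the function-space membership that does not depend on the fine cancellations. By Assumption~\ref{assm-4} we have $\tilde X\in L_{\mathcal F}^{4}(\Omega;C([0,T],\mathbb{R}^{n}))\subset L_{\mathcal F}^{2}(\Omega;C([0,T],\mathbb{R}^{n}))$, and since $p$ is bounded, $\tilde Y=\langle p,\tilde X\rangle\in L_{\mathcal F}^{2}(\Omega;C([0,T],\mathbb{R}))$ is continuous. For $\tilde Z$ one has, pathwise, $\int_{0}^{T}|\tilde Z(t)|^{2}\,dt\le C(\sup_{t}|\tilde X(t)|^{2}\int_{0}^{T}|q(t)|^{2}\,dt+\int_{0}^{T}|L_2(t)|^{2}\,dt)<\infty$ almost surely, so $\tilde Z$ is an admissible integrand and the stochastic integrals below are well-defined local martingales.

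The verification itself is then routine It\^{o} calculus. Applying It\^{o}'s product formula to $\langle p(t),\tilde X(t)\rangle$ with the dynamics (\ref{appen-eq-pq}) and (\ref{appen-eq-x-unb}), I would read off the $dB$-coefficient and recover, through the definition of $K_1$, exactly the $\tilde Z$ of (\ref{appen-rel-unb}); matching the $dt$-coefficient against the backward generator of (\ref{appen-eq-xyz}) is the identity (\ref{appen-relation-generator}) with $A$ given by (\ref{new-eq-111}) and $C\equiv 0$, the latter being the compatibility hypothesis. Substituting $\tilde Y,\tilde Z$ into the forward drift and diffusion of (\ref{appen-eq-xyz}) reproduces (\ref{appen-eq-x-unb}) termwise, and the terminal condition holds since $\tilde Y(T)=\langle p(T),\tilde X(T)\rangle=\langle\kappa,\tilde X(T)\rangle$. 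Thus $(\tilde X,\tilde Y,\tilde Z)$ solves (\ref{appen-eq-xyz}) in the local-martingale sense.

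The hard part will be upgrading $\tilde Z$ to $L_{\mathcal F}^{2,2}$, because the naive bound $\mathbb E\int_{0}^{T}|\langle q,\tilde X\rangle|^{2}\,dt\le \mathbb E[\sup_{t}|\tilde X|^{2}\int_{0}^{T}|q|^{2}\,dt]$ closes only if $q\in L_{\mathcal F}^{2,4}$, which is not assumed. My plan is to avoid the product estimate altogether and instead regard the backward component of (\ref{appen-eq-xyz}) as a linear BSDE for $(\tilde Y,\tilde Z)$ with bounded coefficients $\beta_3,\gamma_3$, terminal datum $\langle\kappa,\tilde X(T)\rangle\in L^{2}$ (with $\kappa=p(T)$ bounded and $\tilde X(T)\in L^{2}$) and free term $\langle\alpha_3,\tilde X\rangle+L_3\in L_{\mathcal F}^{2,2}$. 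Classical linear BSDE theory then gives a unique solution $(Y^{*},Z^{*})\in L_{\mathcal F}^{2}(\Omega;C([0,T],\mathbb{R}))\times L_{\mathcal F}^{2,2}([0,T];\mathbb{R})$; since $(\tilde Y,\tilde Z)$ also solves this BSDE with $\tilde Y\in L_{\mathcal F}^{2}(\Omega;C([0,T],\mathbb{R}))$ and $\int_{0}^{T}|\tilde Z|^{2}\,dt<\infty$ a.s., applying It\^{o} to $|\tilde Y-Y^{*}|^{2}$ along a localizing sequence of stopping times and invoking Gronwall's inequality forces $\tilde Y=Y^{*}$ and $\tilde Z=Z^{*}$, so $\tilde Z\in L_{\mathcal F}^{2,2}$ after all. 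Finally, uniqueness of the FBSDE solution follows because any solution in the stated class must, by the same It\^{o} computation, satisfy (\ref{appen-rel-unb}), forcing its forward part to solve (\ref{appen-eq-x-unb}), which has a unique $L_{\mathcal F}^{4}(\Omega;C([0,T],\mathbb{R}^{n}))$ solution by Assumption~\ref{assm-4}.
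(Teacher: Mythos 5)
Your reading of the compatibility hypothesis as $C(t)\equiv 0$ in (\ref{new-eq-112}) with $\varphi\equiv\nu\equiv 0$ and $\varsigma=0$ is exactly the intended mechanism, and your treatment of $\tilde{Y}$ matches the paper. Where you genuinely diverge is on the one substantive step, the membership $\tilde{Z}\in L_{\mathcal{F}}^{2,2}([0,T];\mathbb{R})$. The paper does this head-on with precisely the product estimate you reject: it first invokes Theorem 5.2 of \cite{Hu-JX} to upgrade $q$ from $L_{\mathcal{F}}^{2,2}$ to $L_{\mathcal{F}}^{2,4}$ (a higher-integrability result available because $p$ is bounded), and then Cauchy--Schwarz gives
$\mathbb{E}\bigl[\sup_{t}|\tilde{X}(t)|^{2}\int_{0}^{T}|q(t)|^{2}dt\bigr]\leq\bigl\{\mathbb{E}\bigl[\sup_{t}|\tilde{X}(t)|^{4}\bigr]\bigr\}^{1/2}\bigl\{\mathbb{E}\bigl[\bigl(\int_{0}^{T}|q(t)|^{2}dt\bigr)^{2}\bigr]\bigr\}^{1/2}<\infty$,
which is exactly why the hypothesis $\tilde{X}\in L_{\mathcal{F}}^{4}(\Omega;C([0,T],\mathbb{R}^{n}))$ rather than $L^{2}$ appears in the statement. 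Your alternative --- identify $(\tilde{Y},\tilde{Z})$ with the classical $L^{2}$ solution of the backward component regarded as a linear BSDE with bounded coefficients $\beta_{3},\gamma_{3}$, free term $\langle\alpha_{3},\tilde{X}\rangle+L_{3}$ and terminal datum $\langle\kappa,\tilde{X}(T)\rangle$, via localization and Gronwall --- is also correct and is more self-contained, since it does not need the external estimate on $q$; the price is that the identification argument must be carried out in the order you indicate (the integral $\int\delta Y\,\delta Z\,dB$ is only a local martingale before $\delta Z\in L^{2,2}$ is known, so one stops, takes expectations, and passes to the limit by Fatou before absorbing the $\delta Z$ term).

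The one real gap is your final sentence on uniqueness: the It\^{o} computation shows that the \emph{constructed} triple solves (\ref{appen-eq-xyz}); it does not show that \emph{every} solution in the stated class satisfies the decoupling relation (\ref{appen-rel-unb}), so uniqueness of the FBSDE does not follow from it as stated. Establishing that an arbitrary solution decouples through $p$ requires a separate argument (e.g.\ estimating the difference of two solutions). To be fair, the paper's own proof is silent on uniqueness as well, so your proposal is not weaker than the paper here, but the claim as you phrase it is not proved.
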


\begin{proof}
Due to $p(\cdot)\in L_{\mathcal{F}}^{\infty}(\Omega;C([0,T],\mathbb{R}^{n}))$,
we have $\tilde{Y}(\cdot)\in L_{\mathcal{F}}^{2}(\Omega;C([0,T],\mathbb{R}))$.
On the other hand, from Theorem 5.2 in \cite{Hu-JX}, we can obtain
$q(\cdot))\in L_{\mathcal{F}}^{2,4}([0,T];\mathbb{R})$. Combining with
$\tilde{X}(t)\in L_{\mathcal{F}}^{4}(\Omega;C([0,T],\mathbb{R}^{n}))$ , we
have
\[%
\begin{array}
[c]{rl}%
\mathbb{E}\left[  \int_{0}^{T}|\left\langle K_{1}\left(  t\right)  ,\tilde
{X}(t)\right\rangle |^{2}dt\right]   & \leq C\mathbb{E}\left[  \sup
\limits_{t\in\lbrack0,T]}|\hat{X}(t)|^{2}\int_{0}^{T}\left(  1+|q(t)|^{2}%
\right)  dt\right]  \\
& \leq C\mathbb{E}\left[  \sup\limits_{t\in\lbrack0,T]}|\hat{X}(t)|^{2}%
\right]  +C\left\{  \mathbb{E}\left[  \sup\limits_{t\in\lbrack0,T]}|\hat
{X}(t)|^{4}\right]  \right\}  ^{\frac{1}{2}}\left\{  \mathbb{E}\left[  \left(
\int_{0}^{T}|q(t)|^{2}dt\right)  ^{2}\right]  \right\}  ^{\frac{1}{2}}\\
& <\infty\text{.}%
\end{array}
\]
This completes the proof. 
\end{proof}

\end{document}